\pgfplotsset{compat=newest}
\newtheorem{theorem}{Theorem}[section]
\newtheorem{corollary}[theorem]{Corollary}
\newtheorem{lemma}[theorem]{Lemma}
\newtheorem{proposition}[theorem]{Proposition}
\newtheorem{definition}[theorem]{Definition}
\newtheorem{remark}[theorem]{Remark}
\title{Quantum Homology for Lagrangian Cobordism}
\author{Berit Singer}
 \date{February 2, 2016}
\begin{document}

\maketitle

\begin{abstract}
We extend the definition of Lagrangian quantum homology to monotone Lagrangian cobordism and establish its general algebraic
properties.
In particular we develop a relative version of Lagrangian quantum homology associated to a cobordism relative to a part of its boundary and study relations of this invariant to the ambient quantum homology.
\end{abstract}

\section{Introduction}\label{sec:introduction2}

The purpose of this paper is to develop a theory of quantum homology for Lagrangian cobordism, describe structures on it and explore relations to possible versions of the ambient quantum homology and the Lagrangian quantum homology of the ends of the cobordism.

Let $M^{2n}$ be a closed symplectic manifold and $L^n\subset M^{2n}$ a connected and closed Lagrangian submanifold.
The quantum homology $QH_*(M)$ of $M$ is additively the same as the singular homology of $M$.
We endow $QH_*(M)$ with the \emph{quantum product}, which is a deformation of the usual intersection product.
The Lagrangian quantum homology $QH(L)$ of a Lagrangian submanifold $L$ is the homology of the so called \emph{pearl complex}.
The pearl complex was first suggested by Fukaya in~\cite{fukaya} and by Oh in~\cite{oh} and later implemented by Biran and Cornea in~\cite{LQH} and further developed in~\cite{rigidity} and~\cite{LagrTop}.
We refer the reader to ~\cite{LQH},~\cite{rigidity} and~\cite{LagrTop} for details.
The pearl complex is defined by counting elements in $0$-dimensional moduli spaces of so called \emph{pearly trajectories}.
Roughly speaking, these pearly trajectories are Morse trajectories, where some points are replaced by pseudo-holomorphic disks.
In Section~\ref{sec:pearlcomplex} we briefly recall the main ideas for the construction 
of this chain complex.

Following~\cite{LagrCobordism} a \emph{Lagrangian cobordism} between two families $\{ L_1^-, \cdots , L_k^- \}$ and $\{ L_1^+, \cdots , L_l^+ \}$ of Lagrangian submanifolds in $M$, is a Lagrangian submanifold $V\subset \mathbb{R}^2\times M$ with $k+l$ cylindrical ends. 
The negative ends are identified with $(-\infty,  0]\times \{a_i^-\} \times L_j^-$ for $j=1, \cdots r_-$ and the positive ends identified with $[1,\infty)\times \{a_i^-\} \times L_i^+$ for $i=1, \cdots r_+$.
For the precise definition of Lagrangian cobordism see section~\ref{sec:LC} 
or~\cite{LagrCobordism}.
The main purpose of this paper is to define and study the quantum homology $QH(V,S)$ of a Lagrangian cobordism $V$ relative to a part of its boundary $S\subset \partial V$.
One of the possible applications of such a theory would be to study relations between symplectic invariants of different Lagrangians $L_-^i$ and $L_+^j$ that occur as different boundary components of the same Lagrangian cobordism.
In addition, there are natural maps describing the relation between the quantum homology of the total space of a Lagrangian cobordism and those of its ends. 
These maps preserve some of the structures on the quantum homologies, for example the ring structure.
These facts turn out to be helpful in order to calculate invariants of Lagrangian submanifolds.
For instance Biran and Membrez used quantum homology calculations in~\cite{cedric} to show that the discriminant of certain Lagrangian submanifolds are preserved under cobordism.
It is also natural to look for relations between the quantum homologies of Lagrangian 
submanifolds and the quantum homology of the ambient manifold.

The construction of Lagrangian quantum homology for cobordisms has already been outlined in~\cite{LagrCobordism}.
Let $\mathcal{R}$ be a ring, and denote by $\Lambda:=\mathcal{R}[t,t^{-1}]$ the ring of Laurent polynomials in the variable $t$, where $deg(t):=-N_V$ is minus the minimal Maslov number $N_V$ of $V$.
Unless $Char(\mathcal{R})=2$, we assume that $V$ is spin and in this case we fix a spin structure on $V$.
We endow the ends of the cobordism with the spin structures obtained by restricting the spin structure of $V$.
Denote $\tilde{M}^{2n+2}:=\mathbb{R}^2\times M$ and fix the symplectic form $\omega:=\omega_{\mathbb{R}^2} \oplus \omega_M$ on $\tilde{M}$, 
where $\omega_{\mathbb{R}^2}$ is the symplectic form on $\mathbb{R}^2$ given by $dx \wedge dy$ for $(x,y)\in \mathbb{R}^2$.
Let $\pi: \tilde M \rightarrow \mathbb{R}^2$ be the projection.
We use the notation $V|_A:=V \cap \pi^{-1}(A)$ and $\tilde{M}|_A:=\tilde{M}\cap 
\pi^{-1}(A)$ for a subset $A\subset\mathbb{R}^2$.
In particular $\tilde{M}|_{[0,1]\times \mathbb{R}}=[0,1]\times \mathbb{R} \times M$.
Fix a Riemannian metric $\tilde{\rho}$ and an $\omega$-compatible almost complex structure 
$\tilde{J}$ on $\tilde{M}$.
This induces a Riemannian metric and an almost complex structure on $\tilde{M}|_{[0,1]\times \mathbb{R}}$, which will also be denoted by $\tilde{\rho}$ and $\tilde{J}$.
Consider now $V|_{[R_-,R_+]\times \mathbb{R}}$, where $R_-<0$ and $R_+>0$ are such that 
$V$ is cylindrical outside of $[R_-,R_+]\times \mathbb{R}\times M$.
Let $S\subset \partial V|_{[R_-,R_+]\times \mathbb{R}}$ be the union of some connected components of the boundary of $V|_{[R_-,R_+]\times \mathbb{R}}$.
Let $\tilde{f}$ be a Morse function on $V|_{[R_-,R_+]\times \mathbb{R}}$, such that the negative gradient of $\tilde{f}$ points outside along $S$ and inside along $\partial V|_{[R_-,R_+]\times \mathbb{R}} \setminus S$.
Moreover, we assume some additional condition on the almost complex structure near the 
boundary $\partial V|_{[R_-,R_+]\times \mathbb{R}}$, which are described in detail 
later in this paper.
With this data we are able to define a relative quantum homology for the the pair $(V,S)$.
The following theorem describes the existence of algebraic structures on the quantum homology of Lagrangian cobordism.
These structures are similar to the ones known for the usual Lagrangian quantum homology described in~\cite{QuantumStructures}.
Recall that the dimension of $V$ is $n+1$ and that of $\tilde{M}$ is $2n+2$.
\begin{theorem}\label{thm:QuantumStructures_LQHforLC}
 For a generic choice of the triple $(\tilde{f},\tilde{\rho},\tilde{J})$ there exists a chain complex
\begin{equation*}
 C(V,S;\tilde{f},\tilde{\rho},\tilde{J})=(\mathcal{R} \langle Crit(\tilde{f}) \rangle \otimes \Lambda, d)
\end{equation*}
with the following properties:
\begin{enumerate}
 \item The homology $QH_*(V,S)$ of the chain complex is independent of the choices of $\tilde{f}$, $\tilde{\rho}$ and $\tilde{J}$.
 \item If $S=\emptyset$ there exists a canonical, degree preserving augmentation 
\begin{equation*}
 \epsilon_V:QH_*(V) \rightarrow \Lambda,
\end{equation*}
which is a $\Lambda$-module map and satisfies an additional property that is detailed in point~\ref{itm:duality} below.
  \item
\begin{enumerate}
 \item\label{itm:prod} There exists a $\Lambda$-bilinear product 
\begin{equation*}
\begin{array}{ccc}
  *:QH_i(V,S)\otimes QH_j(V,S) & \rightarrow & QH_{i+j-(n+1)}(V,S) \\
  \alpha \otimes \beta & \mapsto & \alpha * \beta,\\
\end{array}
\end{equation*}
which endows $QH_*(V,S)$ with the structure of a (possibly non-unital) ring.
Moreover, if $S=\partial V$, then the $QH_*(V,\partial V)$ is a ring with unit.
\item Let $(G,\cap)$ be the monoid, where $G=\{\text{subsets of } \pi_0( \partial 
V) \}$ and $ \cap: G \times G\rightarrow G: (S,S') \mapsto S\cap S'$ is the intersection, 
with unit $\partial V$.
There exists a $\Lambda$-bilinear product 
\begin{equation*}
\begin{array}{ccc}
  *:QH_i(V,S)\otimes QH_j(V,S') & \rightarrow & QH_{i+j-(n+1)}(V,S\cap S') \\
  \alpha \otimes \beta & \mapsto & \alpha * \beta,\\
\end{array}
\end{equation*}
which extends the product in~\ref{itm:prod} and endows $\bigoplus_{S\in G} QH_*(V,S)$ with the structure of a unital graded ring over the monoid $G$.
\end{enumerate}

\item Consider the sets $T:=[0,1]\times \mathbb{R} \subset \mathbb{R}^2$and $R:=[0,1]\times [-k,k] \subset \mathbb{R}^2$, where $k>0$ is large enough so that $V|_{\mathbb{R}\times[-k,k]}=V$.
Denote $\tilde{M}_T:=\tilde{M}|_T$ and $\tilde{M}_R:=\tilde{M}|_R$. 
Then there exist ambient quantum homologies $QH_*(\tilde{M}_T,\partial \tilde{M_T})$ and $QH_*(\tilde{M}_R,\partial\tilde{M_R})$, the latter having the structure of a unital ring and $QH_*(\tilde{M}_T,\partial \tilde{M_T})$ is a module over $QH_*(\tilde{M}_R,\partial\tilde{M_R})$.
There exists a bilinear map
\begin{equation*}
 *:QH_i(\tilde{M}_R,\partial \tilde{M_R}) \otimes QH_j(V,S) \rightarrow 
QH_{i+j-(2n+2)}(V,S),
\end{equation*}
which turns $QH_*(V,S)$ and $\bigoplus_{S\in G} QH_*(V,S)$ into a module over the ring 
$QH_*(\tilde{M}_R,\partial
\tilde{M_R})$.
Moreover $QH(V,S)$ is a (possibly non-unital) two-sided 
algebra over the ring $QH_*(\tilde{M}_R,\partial \tilde{M_R})$.
In particular, the rings $\bigoplus_{S\in G} QH_*(V,S)$ and $QH(V,\partial V)$ are 
unital two-sided algebras.
  \item\label{itm:inclusion} There exists a $QH_*(\tilde{M}_R,\partial \tilde{M_R})$- linear inclusion map
\begin{equation*}
 i_{(V,S)}:QH_*(V,S) \rightarrow QH_*(\tilde{M}_T, \partial \tilde{M}_T),
\end{equation*}
which extends the inclusion in singular homology and is determined by
\begin{equation}
 \langle h^*,i_L(x)\rangle=\epsilon(h*x),
\end{equation}
for every $x\in QH_*(L)$, $h\in QH_*(M)$. 
Here $(-)^*$ denotes the Poincar\'e dual and $\langle \cdot, \cdot \rangle$ the Kronecker pairing.
\item\label{itm:duality} There exists an isomorphism 
\begin{equation*}
\begin{array}{ccc}
  \eta:QH_i(V,S) & \rightarrow & QH^{(n+1)-i}(V,\partial V \setminus S),
\end{array}
\end{equation*}
where $QH^{(n+1)-i}(V,\partial V \setminus S)$ denotes the k'th cohomology ring 
associated to the cochain complex $C(V,S;\tilde{f},\tilde{\rho},\tilde{J})^*$.
The corresponding (degree $-(n+1)$) bilinear map 
\begin{equation*}
\begin{array}{ccc}
  \tilde{\eta}:QH_i(V,S)\otimes QH_{(n+1)-i}(V, S) & \rightarrow & \Lambda \\
  x\otimes y & \mapsto & [\eta(x)(y)],\\
\end{array}
\end{equation*}
satisfies the identity 
\begin{equation*}
 \tilde{\eta}(x\otimes y) = \epsilon_V(x*y).
\end{equation*}

\end{enumerate}
\end{theorem}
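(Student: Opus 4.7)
The plan is to mirror the construction of Biran--Cornea for closed monotone Lagrangians, with the essential new ingredient being the handling of the boundary $\partial V|_{[R_-,R_+]\times \mathbb{R}}$ via the sign condition on $\nabla \tilde f$ along $S$ versus along $\partial V\setminus S$. First I would define the chain complex $C(V,S;\tilde f,\tilde\rho,\tilde J)$ whose underlying module is $\mathcal{R}\langle\mathrm{Crit}(\tilde f)\rangle\otimes\Lambda$, with differential counting pearly trajectories on $V|_{[R_-,R_+]\times\mathbb{R}}$: the outward-pointing condition along $S$ means Morse flow lines can escape through $S$ and therefore do \emph{not} contribute boundary terms at $S$, while the inward-pointing condition along $\partial V\setminus S$ traps trajectories away from that part, so that only interior breakings and bubblings remain. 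The moduli spaces of pearly trajectories must be shown to be smooth manifolds of the expected dimension for generic $(\tilde f,\tilde\rho,\tilde J)$; here the standard Biran--Cornea transversality argument applies, provided one checks that the additional almost complex structure condition near $\partial V$ mentioned in the setup rules out disk bubbles with boundary escaping through $\partial V$. Gromov compactness then delivers $d^2=0$ and finiteness of the zero-dimensional moduli.

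Invariance under $(\tilde f,\tilde\rho,\tilde J)$ follows by the usual continuation argument, using a homotopy whose members all satisfy the prescribed boundary behavior; this is a parametric version of the same moduli analysis. The augmentation $\epsilon_V$ when $S=\emptyset$ is then defined by counting, for each critical point, pearly trajectories with no output (equivalently, evaluating on the fundamental class), and its $\Lambda$-linearity is formal.

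For the products, I would adapt the ``pearly Y-diagram'' construction: given $(S,S')$, choose three auxiliary Morse functions $\tilde f_1,\tilde f_2,\tilde f_3$ on $V|_{[R_-,R_+]\times\mathbb{R}}$ with $-\nabla\tilde f_1$ pointing outward along $S$, $-\nabla\tilde f_2$ outward along $S'$, and $-\nabla\tilde f_3$ outward along $S\cap S'$, and count trees of gradient flow lines and holomorphic disks connecting critical points of $\tilde f_1,\tilde f_2$ to a critical point of $\tilde f_3$. The compatibility between boundary conditions is exactly what permits a well-defined output in $QH_*(V,S\cap S')$: a flow line of $\tilde f_3$ exits through $S\cap S'$ iff it could have entered from either input factor. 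Associativity and graded commutativity follow from deforming the underlying tree configuration, as in~\cite{QuantumStructures}. The module structure over the ambient $QH_*(\tilde M_R,\partial\tilde M_R)$ is produced by analogous mixed configurations with one ambient Morse--Floer input, using the projection $\pi$ to keep track of which region of $\mathbb{R}^2$ contains the ambient critical point.

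The inclusion $i_{(V,S)}$ is defined by counting pearly trajectories that start at a critical point of $\tilde f$ on $V$ and terminate at an ambient critical point in $\tilde M_T$; the characterization via $\langle h^*,i_L(x)\rangle=\epsilon(h*x)$ is then a direct comparison of moduli spaces with the ones defining the module action and the augmentation. For Poincaré duality, I would exploit the involution $\tilde f\mapsto -\tilde f$: it interchanges critical points of index $k$ and $(n+1)-k$, and because $-\nabla(-\tilde f)=\nabla\tilde f$ points outward exactly along $\partial V\setminus S$, this gives the desired isomorphism $\eta:QH_i(V,S)\cong QH^{(n+1)-i}(V,\partial V\setminus S)$. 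The identity $\tilde\eta(x\otimes y)=\epsilon_V(x*y)$ then reduces, via gluing, to comparing the pair-of-pants moduli computing $x*y$ with the fibered product of cochain--chain pearly trajectories computing the pairing. The main obstacle throughout is the analytic book-keeping for the boundary: one must rule out broken configurations whose breaking point lies on $\partial V$, and verify that the outward/inward dichotomy, together with the almost complex structure condition near $\partial V$, excludes disk and sphere bubbles that would violate $d^2=0$ or spoil the chain-map property of the product and module operations.
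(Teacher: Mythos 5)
Your overall strategy matches the paper's. The key shared insight is that the split almost complex structure $i\oplus J$ outside $K\times M$, via the open mapping theorem (Lemma~\ref{lem:curve_openmapping}), forces any holomorphic disk escaping $K\times M$ to project to a constant in $\mathbb{R}^2$, and — together with the transversality of $-\nabla\tilde f$ to $\partial V$ — this keeps pearly trajectories connecting critical points in the interior. Your treatment of the products, module structure, inclusion, and duality (via $\tilde f\mapsto-\tilde f$) likewise follows the paper's line.

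The one place you skate over a real obstacle is the assertion that ``the standard Biran--Cornea transversality argument applies.'' In the closed setting one achieves transversality by perturbing the almost complex structure freely; here $\tilde J$ is constrained to equal $i\oplus J$ outside $K\times M$, so near the boundary you have no freedom to perturb. For disks whose image leaves $K\times M$ — which, by the open mapping lemma, have the form $\tilde u = (p,u')$ with $p\in\mathbb{C}$ constant and $u'$ a $J$-holomorphic disk in $M$ — transversality therefore does not follow from genericity of $\tilde J$. It is rescued by an \emph{automatic transversality} observation: the linearization of $\overline{\partial}$ at a constant map $\Sigma\to\mathbb{C}$ is already surjective, so the linearized operator $D_{\tilde u}$ splits and is surjective whenever $D_{u'}$ is (this is Lemma~\ref{automatictransversality} in the paper). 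Without this step your claim that the moduli spaces near $\partial V$ are manifolds of the expected dimension is unjustified, and hence so is the compactness argument giving $d^2=0$ and, downstream, the well-definedness of the products, the module action, and the duality pairing.
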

Parts of this theorem have already appeared in~\cite{LagrCobordism} with an outline of 
the proof. 
Below we elaborate more on these and also prove the new statements.
Our approach is based very much on the general theory of Lagrangian quantum homology from~\cite{LQH} and~\cite{LagrCobordism}.
Our next result describes a relation between the quantum homologies of $V$ relative to 
its boundary and that of its boundary.
\begin{theorem}\label{thm:longexact}
Let $S\subset \partial V$ be the union of some of the connected components of $\partial 
V$.
There exists a long exact sequence
\begin{equation*}
\xymatrix{
 \dots \ar[r]^{\delta_*} & QH_*(S) \ar[r]^{i_*} & QH(V)\ar[r]^{j_*} & QH_*(V,S)\ar[r]^{\delta_*} & QH_{*-1}(S)\ar[r]^{i_*} & \dots,
}
\end{equation*}
which has the following properties:
\begin{enumerate}
\item Suppose that $S=\partial V$. Let $e_{(V,\partial V)}$ and $e_L$ denote the unit of $QH(V,\partial V)$ and $QH(L)$ respectively. 
Then 
$$\delta_*(e_{(V,\partial V)})= \oplus_{i} e_{L_i^-}\oplus_{j} e_{L_j^+},$$
 where $\partial V= \coprod L_i^- \coprod L_j^+$.
\item  The map $\delta_*$ is multiplicative with respect to the quantum product $*$, namely 
\begin{equation}
 \delta_*(x*y)=\delta_*(x)*\delta_*(y) \ \ \forall x,y \in QH_*(V,S).
\end{equation}
\item  The product $*$ on $QH_*(V)$ is trivial on the image of the map $i_*$.
In other words, for any two elements $a$ and $b$ in $QH_*(S)$ we have that 
$$i_*(a)*i_*(b)=0.$$
\item The map $j_*$ is multiplicative with respect to the quantum product, namely
\begin{equation*}
 j_*(x*y)=j_*(x)*j_*(y)\ \ \forall x,y \in QH_*(V).
\end{equation*}
\item There exists a ring isomorphism $\Phi: QH_*(M) \rightarrow QH_{*+2}(\tilde{M}_R,\partial \tilde{M}_R)$. 
Recall also that $QH_*(\partial V)$ is a module over the ring $QH_*(M)$, and $QH_*(V)$, 
$QH_*(V,\partial V)$ are modules over the ring $QH_{*+2}(\tilde{M}_R,\partial 
\tilde{M}_R)$.
We then have the following identities:
\begin{enumerate}
 \item[(i)] $i_*(a \ast x)= \Phi(a) \ast i_*(x)$, for every $x\in QH(\partial V)$ and every $a\in QH(M)$.
 \item[(ii)] $j_*(a\ast x)=a\ast j_*(x)$, for every $x\in QH(V)$ and every $a\in QH(\tilde{M}_R,\partial \tilde{M}_R)$.
 \item[(iii)] $\delta_*(a\ast x)=\Phi^{-1}(a)\ast \delta_*(x)$, for every $x\in 
QH(V,\partial V)$ and every $a\in QH(\tilde{M}_R,\partial \tilde{M}_R)$.
\end{enumerate}
i.e. the maps in the long exact sequence are module maps over the ring 
$QH_*(\tilde{M}_R,\partial \tilde{M}_R)$ and $QH_*(M)$.
\end{enumerate}
\end{theorem}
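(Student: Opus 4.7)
The plan is to derive the long exact sequence as the homology sequence of a short exact sequence of chain complexes built from a continuation/bifurcation argument, and then to verify each algebraic property individually by geometric analysis of the underlying moduli spaces.

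For the construction, I would fix compatible data consisting of three Morse functions: $\tilde F$ on $V$ with $-\nabla \tilde F$ pointing inward along all of $\partial V$, so that $C(V;\tilde F)$ computes $QH_*(V)=QH_*(V,\emptyset)$; a Morse function $g$ on $S$ computing $QH_*(S)$; and $\tilde F_S$ on $V$ that agrees with $\tilde F$ outside a collar of $S$ but whose gradient is flipped to point outward along $S$, so that $C(V,S;\tilde F_S)$ computes $QH_*(V,S)$. A generic interpolation between $\tilde F$ and $\tilde F_S$ induces a chain map $j:C(V;\tilde F)\to C(V,S;\tilde F_S)$. The essential step is to identify the mapping cone of $j$ with $C(S;g)$ up to a degree shift. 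Geometrically, as the normal quadratic factor near $S$ is deformed from negative-definite to positive-definite during the interpolation, critical points near $S$ drift toward $S$ and annihilate in canceling Morse-theoretic pairs, each pair corresponding bijectively to a critical point of $g$ with an index shift of one; the associated pearly trajectories degenerate through a parametric moduli analysis analogous to the invariance proof for $QH_*(V,S)$ underlying Theorem~\ref{thm:QuantumStructures_LQHforLC}(1). This bifurcation analysis is the main technical obstacle. Once it is carried out, the resulting short exact sequence
\begin{equation*}
0\to C(S;g)\to C(V;\tilde F)\to C(V,S;\tilde F_S)\to 0
\end{equation*}
yields the long exact sequence, with $\delta_*$ arising as the standard connecting homomorphism.

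With the sequence in hand, the algebraic properties are established as follows. For (1), when $S=\partial V$, the unit $e_{(V,\partial V)}$ is represented by the fundamental chain of $V$; its image under $\delta_*$ is represented by $\partial V=\coprod L_i^-\coprod L_j^+$, which decomposes as $\oplus_i e_{L_i^-}\oplus_j e_{L_j^+}$. Multiplicativity of $j_*$ in (4) is immediate since $j$ is a chain map intertwining the Y-shaped pearl configurations defining the product; multiplicativity of $\delta_*$ in (2) then follows via the Leibniz-type identity for connecting homomorphisms applied to a chain-level product on $C(V,\partial V)$. For (3), applying $j_*$ to $i_*(a)*i_*(b)$ gives zero by (4), placing the product in $\mathrm{im}(i_*)$; a further argument using the projection $V\subset\mathbb{R}^2\times M\to\mathbb{R}^2$ from the cobordism structure, together with the additional conditions on $\tilde J$ near $\partial V$, shows that the relevant Y-shaped pearl configurations with both inputs supported near $S$ can be displaced to trivial ones, forcing the product to vanish.

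For (5), the ring isomorphism $\Phi:QH_*(M)\to QH_{*+2}(\tilde M_R,\partial\tilde M_R)$ arises from the product splitting $\tilde M_R=R\times M$ together with the identification $QH_*(R,\partial R)\cong\Lambda$ concentrated in degree $2$, constructed explicitly at the chain level and shown to be multiplicative directly from the pearl definition. The compatibility identities (i)--(iii) then follow from naturality of the external pearl product under the maps $i_*$, $j_*$, and $\delta_*$: each identity is established by exhibiting a chain-level null-homotopy between the two sides, built from an appropriate one-parameter family of module-action pearly trajectories that interpolates the two operations at hand.
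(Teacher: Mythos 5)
Your plan departs from the paper's construction in a way that leaves the central step unproven, and the departure also creates downstream trouble for the verification of the properties.

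The asserted short exact sequence $0\to C(S;g)\to C(V;\tilde F)\to C(V,S;\tilde F_S)\to 0$ is not a short exact sequence of chain complexes in any obvious sense: the critical points of $g$ on $S$ are not naturally critical points of $\tilde F$ on $V$, so $C(S;g)$ is not a subcomplex of $C(V;\tilde F)$, and the continuation map $j$ from one Morse datum to another is neither surjective nor has a readily identified kernel. What you propose instead — identify the mapping cone of $j$ with a shift of $C(S;g)$ via a bifurcation analysis — is a genuinely different route, and you correctly flag it as the main technical obstacle, but you do not carry it out. The paper sidesteps this entirely by choosing a single Morse function on $V^{\epsilon}$ (Definition~\ref{def:specialMF}) whose profile $\sigma_\iota$ on each collar belonging to $S^{\epsilon}$ has exactly two critical points, of index $0$ and index $1$. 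Critical points near $S$ then come in pairs $(t_{\iota,0},a_\iota,p)$ and $(t_{\iota,1},a_\iota,p)$, with the index-$0$ members generating $C(U;\tilde f|_U)\cong C(V;\cdot)$ and the index-$1$ members projecting onto a complex isomorphic to $C(S)$ shifted by one. The short exact sequence is then the tautological inclusion-followed-by-quotient on generators; no bifurcation argument is needed.

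This also affects which map in the long exact sequence is the connecting homomorphism. In your cone picture $\delta_*$ is the connecting homomorphism; in the paper's sequence $i_*$ is the connecting homomorphism, while $\delta$ and $j$ are honest chain maps (restriction and inclusion of generators). This matters for property (2): the paper proves multiplicativity of $\delta_*$ directly from $\delta(x*y)=\delta(x)*\delta(y)$ at the chain level (Lemma~\ref{lem:deltamultiplicative}), whereas your appeal to a ``Leibniz-type identity for connecting homomorphisms'' is not a standard fact and would itself need a proof. For property (3), your observation that $j_*(i_*(a)*i_*(b))=0$ only places $i_*(a)*i_*(b)$ in $\ker j_* = \operatorname{im} i_*$, not $0$; the paper instead shows the relevant zero-dimensional moduli spaces $\mathcal P_{prod}$ are empty by arranging the two input Morse functions so that one gradient stays in $S^{\epsilon/8}$ and the other in $S^{\epsilon/4}$, so the trajectories can never meet the core. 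You gesture at this ``displacement'' argument but do not set up the Morse data that makes it work.

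Finally, you omit the orientation comparison that the paper devotes Section~\ref{sec:orientation} to: over a general ring $\mathcal R$ one must check that the orientations of disk moduli spaces with boundary on $\partial V$, induced by the spin structure on $V$, agree with those induced by the restricted spin structure on $L$ (Lemma~\ref{lem:orientationmatch}). Without this, $\delta$ is only known to be a chain map mod $2$, and the long exact sequence is only established over $\mathbb Z/2$.
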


\subsection*{Organization of the paper}
In the rest of Section~\ref{sec:introduction2} we first recall the construction of the pearl complex in the compact setting.
Then we explain the ingredients used to prove Theorem~\ref{thm:QuantumStructures_LQHforLC} and Theorem~\ref{thm:longexact} and give a short outline of their proofs. 
The second section is dedicated to a brief description of the setting we are working in.
It also includes an overview of the ambient quantum homology, Lagrangian cobordism and Lagrangians with cylindrical ends.
Sections~\ref{sec:LQHforLC} -~\ref{sec:inclusion} are dedicated to the proof Theorem~\ref{thm:QuantumStructures_LQHforLC}.
In Section 9 we discuss the orientations of the moduli spaces and prove 
Theorem~\ref{thm:longexact} in detail.
In the last section we give an example and compute the various quantum structures for it.

\subsection{The Pearl Complex}\label{sec:pearlcomplex}

For the sake of readability we include in this section a short overview of the construction of the pearl complex as it is defined in~\cite{QuantumStructures}.
Let $Q$ be a closed, monotone Lagrangian submanifold of a connected symplectic manifold $(X,\omega)$ and assume that $Q$ is endowed with a spin structure.
As before, let $\mathcal{R}$ be a ring, and define the ring $\Lambda:=\mathcal{R}[t,t^{-1}]$ of Laurent polynomials in the variable $t$, where the degree of t is $|t|:=-N_Q$ is minus the minimal Maslov number $N_Q$ of $Q$.
(For the definition of monotonicity see Chapter~\ref{sec:setting}.)
If the ground ring $\mathcal{R}$ has characteristic $2$ we do not need to assume the existence of a spin structure.
We fix a Morse function $f:Q\rightarrow \mathbb{R}$ on $Q$, a Riemannian metric $\rho$ on $Q$ as well as an almost complex structure $J$ on $X$.
Given a generic triple $(f,\rho,J)$ we define a complex 
\begin{equation}\label{prl}
 C(f,\rho,J):=\mathcal{R}\langle Crit(f)\rangle \otimes \Lambda,
\end{equation}
here the generators are the critical points of $f$.
The grading in~\ref{prl} is induces from both factors, where $x\in Crit(f)$ is graded by the Morse index of $f$.
The differential of the chain complex is defined using moduli spaces of pearly trajectories.
Given two critical points $x$ and $y$ in $Q$, a pearly trajectory between them is the flow of the negative gradient $-\nabla f$ with finitely many points replaced by non-constant pseudo-holomorphic disks
\begin{equation}
 u_i:(D,\partial D) \rightarrow (X,Q),
\end{equation}
with boundary in $Q$ and with total class $\sum [u_i]=A\in H^D_2(X,Q)$.
Here $H^D_2(X,Q)$ stands for the image of $\pi_2(X,Q)$ in $H_2(X,Q)$ under the Hurewicz homomorphism.
The following figure shows two pearly trajectories connecting two critical points $x$ and $y$ of $f$. 
The first one has $k$ pseudo-holomorphic disks with total class $A\neq0$, the second one has no pseudo-holomorphic disks and thus class $A=0$.

%
%
%
%
%
%
%
%
%
%
\begin{figure}
 \centering
\includegraphics{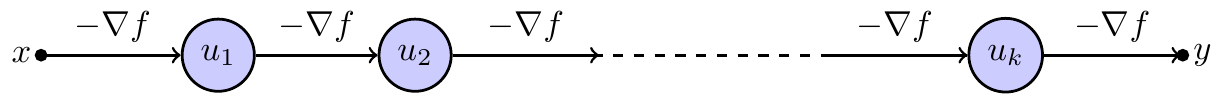}
\centering
\includegraphics{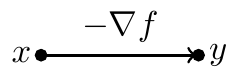}
\end{figure}

These moduli spaces are called moduli spaces of pearly trajectories.
Denote by $\mathcal{P}_{prl}(x,y,A;f,\rho,J)$ the space of unparametrized pearly trajectories between the critical points $x$ and $y$ of $f$ and with total class $A$.
Define $\delta_{prl}(x,y,A):=|x|-|y|+\mu(A)-1$ the virtual dimension of $\mathcal{P}_{prl}(x,y,A;f,\rho,J)$.
In~\cite{QuantumStructures} it is shown that when $\delta_{prl}(x,y,A)\leq 1$, the space $\mathcal{P}_{prl}(x,y,A;f,\rho,J)$ is a smooth manifold of dimension $\delta_{prl}(x,y,A)$.
The spin structure on $Q$ allows to orient the moduli spaces of pearly trajectories.
If $\delta_{prl}(x,y,A)=0$ one can show that $\mathcal{P}_{prl}(x,y,A;f,\rho,J)$ is compact.
The differential $d$ of the complex~(\ref{prl}) is now given as follows. 
For a critical point $x$ define
\begin{equation}
 d(x)=\sum \sharp \mathcal{P}_{prl}(x,y,A;f,\rho,J) y t^{\mu(A)/N_Q},
\end{equation}
where the sum runs over all $y$ and $A$ such that $\delta_{prl}(x,y,A)=0$.
We extend $d$ linearly over $\Lambda$.
Since $\mathcal{P}_{prl}(x,y,A;f,\rho,J)$ is compact the number $\sharp \mathcal{P}_{prl}(x,y,A;f,\rho,J)$ is finite and thus the map $d$ is well-defined.

To show that $d\circ d=0$ we describe the boundary of the compactification of the $1$-dimensional moduli spaces of pearly trajectories.
This can be expressed by $0$-dimensional moduli spaces of pearly trajectories.
Let $\mathcal{P}$ be a $1$-dimensional moduli space of pearly trajectories.
In order to compactify $\mathcal{P}$ we need to add limit trajectories of the following types.
\begin{enumerate}\label{boundary}
 \item[(1)] One of the flow lines of the negative gradient of the Morse function breaks at a critical point.
 \item[(2)] One of the flow lines of the negative gradient of the Morse functions contracts to a constant point.
 \item[(3)] Bubbling of a pseudo-holomorphic disc of the pearly trajectory.
\end{enumerate}
Then the compactification $\overline{\mathcal{P}}$ is obtained from $\mathcal{P}$ by adding all possible limit trajectories and we have:
\begin{equation*}
 \partial\overline{\mathcal{P}}=\{ \text{elements of type (1)} \} \cup \{ \text{elements of type (2)} \}\cup \{ \text{elements of type (3)} \}.
\end{equation*}
For a detailed description of the compactification the reader is referred to~\cite{QuantumStructures}.
Computing $d\circ d$ we can see that the coefficient of some critical point $z$ in the sum $d\circ d(x)$ is the number of two concatenated pearly trajectories, where one goes from $x$ to a critical point $y$ and the other goes from $y$ to $z$. 
By a gluing argument, they are in one to one correspondence with elements of type (1). 
A dimension argument and the fact that the Maslov index of pseudo-holomorphic disks is bounded by the minimal Maslov number $N_Q\geq2$, proves that elements of type (2) and (3) sum to zero, when counted with orientations. 
Hence the coefficient of $z$ in the sum $d\circ d(x)$ is equal to the number of boundary points of $\overline{\mathcal{P}}$ counted with signs, i.e. $\sharp \partial \overline{\mathcal{P}}$.
But since $\overline{\mathcal{P}}$ is a compact $1$-dimensional manifold with boundary we have $\sharp \partial \overline{\mathcal{P}}=0$.
This implies that $d\circ d=0$.

The definition of the chain complex above depends on the choice of the triple $(f,\rho,J)$. 
However, the quantum homology turns out to be independent of such a choice. 
In order to prove this we need to show the existence of chain maps $\phi_{\mathcal{D}}^{\mathcal{D'}}$ for any choice of two generic triples $\mathcal{D}=(f,\rho,J)$ and $\mathcal{D'}=(f',\rho',J,)$.
The chain map $\phi_{\mathcal{D}}^{\mathcal{D'}}: C(\mathcal{D})\rightarrow C(\mathcal{D'})$ then induces a canonical isomorphism on the homology level.
Moreover, the system of such chain maps is compatible with composition and $\phi_{\mathcal{D}}^{\mathcal{D}}=Id$.
This would proves the invariance of $QH(Q)$.
The chain map $\phi_{\mathcal{D}}^{\mathcal{D'}}$ depends on the following data: a generic homotopy $J_t$ of almost complex structures from $J$ to $J'$ and a Morse cobordism $(F,G)$ from the Morse pair $(f,\rho)$ to the pair $(f',\rho')$.
One then defines the so called comparison moduli spaces $\mathcal{P}_{comp}$, which are a small modification of the usual pearl moduli spaces.
For a generic choice of $(J_t,F,G)$ they form smooth manifolds of dimension $|x|-|y|+\mu(\lambda)$.
Counting elements in the $0$-dimensional comparison moduli spaces defines the chain maps $\phi_{\mathcal{D}}^{\mathcal{D'}}$. 
For a precise description we refer the reader to~\cite{QuantumStructures}.

\subsection{Outline of the Proof of 
Theorem~\ref{thm:QuantumStructures_LQHforLC}}\label{sec:OutlineofProof}

In this section we describe the main ingredients in extending the notion of Lagrangian quantum homology to Lagrangian cobordism.
As in the compact case we would like to define a chain complex, where the differential is given by counting elements in the moduli spaces of pearly trajectories. 
The problems that arise when defining the pearl complex for the quantum homology are due to the fact that Lagrangian submanifolds with cylindrical ends are non-compact or alternatively we can be viewed as manifolds with boundary.
Note however that the non-compactness arises only at the cylindrical ends of the Lagrangians.

Away from the boundary of the Lagrangian cobordism transversality and compactness of the pearly moduli spaces follow in the same way as for the setting of closed Lagrangians. 
Since the ends of the cobordism are cylindrical, we may choose the almost complex structure to be split near the boundary of the cobordism.
In other words, we may assume that there exists a compact set $K\subset \mathbb{R}^2$, such that outside of $K\times M$ the cobordism is cylindrical and the almost complex structure has the form $i\oplus J$ for some almost complex structure $J$ on $M$.
Here $i$ is the standard complex structure on $\mathbb{C}\cong \mathbb{R}^2$.
Pseudo-holomorphic disks mapping to the neighbourhood of the boundary of the cobordism will turn out to be constant under the projection $\pi:M\times \mathbb{R}^2 \rightarrow \mathbb{R}^2$, hence their image lies in one fiber of $\pi$.
This is a consequence of the open mapping theorem and it is the content of the following lemma from~\cite{LagrCobordism}.
\begin{lemma}\label{lem:curve_openmapping}
Let $u: \Sigma \rightarrow \tilde{M}$ be a $\tilde{J}$-holomorphic curve, where $\Sigma$ is either $S^2$ or the unit disc $D$ with $u(\partial D) \subset V$.
Then either $\pi \circ u$ is constant or its image is contained in $K$, i.e. $\pi \circ u(\Sigma) \subset K$.
\end{lemma}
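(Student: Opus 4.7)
The plan is to analyse $f := \pi \circ u$ on the open set $U := f^{-1}(\mathbb{R}^2 \setminus K) \subset \Sigma$. Because $\tilde{J}$ splits as $i \oplus J$ on $\tilde M \setminus (K \times M)$ and $u$ is $\tilde{J}$-holomorphic, the composition $f|_U$ is a genuine $\mathbb{C}$-valued holomorphic function on the open Riemann surface $U$. Assuming $f$ is non-constant, one has to show $U = \emptyset$; suppose for contradiction that some connected component $U_0 \subset U$ is non-empty.

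The first step is to rule out $f|_{U_0}$ being constant. If it were constant with value $w_0 \notin K$, continuity would propagate the value to $\overline{U_0}$, but every point of the topological boundary $\partial U_0$ lies in $\Sigma \setminus U$ and hence is mapped by $f$ into $K$; when $\Sigma = D$, the portion of $\partial U_0$ that sits on $\partial D$ is mapped into $\pi(V) \setminus K$, a union of horizontal half-lines, which together with the internal part of $\partial U_0$ forces $w_0 \in K$, a contradiction. The open mapping theorem then guarantees that $f(U_0)$ is a non-empty open subset of $\mathbb{C} \setminus K$.

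To close the argument, I apply the maximum principle to the subharmonic function $|f - c|^2$ on $U_0$ for suitable $c \in \mathbb{C}$, which yields $f(\overline{U_0})$ contained in the polynomial convex hull $\hat K$ of $K$; choosing $K$ to be simply connected (for instance a sufficiently large rectangle enclosing the non-cylindrical core of $V$) gives $\hat K = K$, so $f(\overline{U_0}) \subset K$, contradicting $f(U_0) \subset \mathbb{C} \setminus K$. This handles the case $\Sigma = S^2$ directly. For $\Sigma = D$ one must additionally use the boundary condition: along each connected arc of $\partial D \cap \overline{U_0}$, the image of $f$ lies in the disjoint union of horizontal half-lines comprising $\pi(V) \setminus K$, so the harmonic function $\mathrm{Im}(f)$ takes only the discrete values $\{a_i^\pm\}$ and is therefore locally constant on such arcs. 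Combining this boundary rigidity with the max-principle estimate on the internal part of $\partial U_0$ (where $f$ lands in $\partial K$) and a Schwarz-reflection argument across the totally real boundary arcs forces $\mathrm{Im}(f)$, and hence $f|_{U_0}$ itself, to be constant, once again contradicting the non-constancy of $f|_{U_0}$.

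The main obstacle lies in the disk case, where one has to combine the totally real boundary condition on $\partial D \cap \overline{U_0}$ with the max-principle estimate along the internal arcs of $\partial U_0$ that map into $\partial K$. A careful choice of $K$ as a simply connected compact set (so that its polynomial hull is trivial) is essential; beyond that, the argument is a direct application of the open mapping theorem and the maximum principle for $\mathbb{C}$-valued holomorphic functions.
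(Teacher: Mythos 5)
Your approach is genuinely different from the paper's, and for the sphere case it works (modulo the added hypothesis that $K$ be polynomially convex, which is harmless because one may always enlarge $K$ to a disk). But in the disk case the proposal has a real gap. The paper's proof takes a cleaner route: instead of working with $U = (\pi\circ u)^{-1}(\mathbb{R}^2\setminus K)$, it picks a connected component $W$ of $\mathbb{C}\setminus (K\cup\pi(V))$ — note that it removes $\pi(V)$ as well as $K$. Then $\pi\circ u(\partial D)\subset\pi(V)$ misses $W$ entirely, so $\pi\circ u(\Sigma)\cap W = \pi\circ u(\mathrm{int}\,\Sigma)\cap W$ is open in $W$ by the open mapping theorem, while $\pi\circ u(\Sigma)\cap W = \overline{\pi\circ u(\Sigma)}\cap W$ is closed in $W$ by compactness of $\Sigma$. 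Connectedness of $W$ forces this set to be empty or all of $W$; the latter contradicts unboundedness of $W$ against boundedness of the compact image.

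The gap in your disk case is precisely where this device is doing work. You want to apply the maximum principle to $|f-c|^2$ on $U_0$, but the arcs $U_0\cap\partial D$ lie inside $U_0$ (not on its topological boundary $\partial U_0$), and $f$ sends them into $\pi(V)\setminus K$, a union of \emph{unbounded} horizontal rays. So you have no a priori bound on $|f|$ along those arcs, and the max-principle estimate $f(\overline{U_0})\subset\hat K$ does not go through. The Schwarz-reflection remedy is only sketched, and as sketched it does not close the hole: different boundary arcs of $U_0\cap\partial D$ may carry different values $a_i^\pm$ of $\mathrm{Im}(f)$, so the reflections are across different horizontal lines, the doubled domain again has boundary pieces landing on horizontal rays rather than in $K$, and the asserted conclusion ``$f|_{U_0}$ is constant'' is not derived from what precedes it. The paper avoids all of this because by discarding $\pi(V)$ from the ambient region it never has to control the boundary values on $\partial D$ at all; only the interior openness and the compactness of the image are needed.

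If you want to salvage your route, the cleanest fix is to adopt the paper's choice of ambient set: replace $\mathbb{C}\setminus K$ by $\mathbb{C}\setminus(K\cup\pi(V))$, so that the boundary of $\Sigma$ cannot contribute and the maximum-principle (or, more simply, the open/closed) argument runs on $\mathrm{int}\,\Sigma$ alone.
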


Since this lemma plays a central role in our argument we include the proof following the lines of~\cite{LagrCobordism}.
\begin{proof}
Note that $\pi\circ u$ is bounded since $\Sigma$ is compact.
Suppose now that $\pi \circ u(\Sigma) \nsubseteq K$.
Assume by contradiction that $\pi \circ u$ is not constant.
The set $\mathbb{C}\setminus ( K \cup \pi(V))$ is a union of unbounded, connected, open subsets of $\mathbb{C}$. 
Let $W$ be one of these connected open subsets.
Notice that $\pi\circ u(\Sigma)\cap W=\pi\circ u(int(\Sigma))\cap W=\overline{\pi\circ u(\Sigma)}\cap W$.
By the open mapping theorem, the image $\pi\circ u(int(\Sigma))\cap W$ of the open connected set $int(\Sigma)$ under the holomorphic map $\pi \circ u$ must be open in $W$.
On the other hand $\overline{\pi\circ u(\Sigma)}\cap W=\pi\circ u(\Sigma)\cap W$ is closed in $W$.
Since $W$ is connected it follows that $\pi\circ u(\Sigma)\cap W=W$.
This is a contradiction, since $W$ is unbounded.
\end{proof}
In particular, the previous lemma ensures that the pearly trajectories do not reach the boundary of the cobordism and thus the arguments used in the compact case work also in our setting.
Analogous arguments will also apply to the moduli spaces used to define the quantum product, the module structure and the inclusion.
Lemma~\ref{lem:curve_openmapping} therefore plays a key role in the proof of Theorem~\ref{thm:QuantumStructures_LQHforLC}.

Finally, we need to ensure that the moduli spaces of $(i\oplus J)$-holomorphic disks, which are constant in the $\mathbb{R}^2$-factor, are manifolds.
This guarantees transversality of the moduli spaces near the boundary.
The following lemma can be proven by a standard calculation. 
We omit the details.
\begin{lemma}\label{automatictransversality}
Let $u:\Sigma \rightarrow \mathbb{C}: z \mapsto p$ be a constant map, where $\Sigma$ is either the unit disk $D$ or the sphere $S^2$. 
Then the linearization $D_u$ of the $\overline{\partial}$ map at the curve $u$ is surjective. 
Hence, the space of $(i\oplus J)$-holomorphic curves $u:(\Sigma,\partial \Sigma)\rightarrow (\tilde{M},V)$ with constant projection to the $\mathbb{R}^2$-factor is a manifold of dimension equal to the index of the Fredholm operator $D_{u'}$, where $u=(p,u')$ and $u'$ is a $J$-holomorphic curve in $M$.
\end{lemma}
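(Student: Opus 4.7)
My plan is to exploit the fact that $\tilde J = i\oplus J$ is split, which makes the linearized $\bar\partial$ operator decouple at any curve with constant $\mathbb R^2$-projection. Writing $u = (p,u')$, we identify $u^*T\tilde M \cong \underline{\mathbb C}\oplus (u')^*TM$, and since the Cauchy--Riemann operator is determined by the almost complex structure alone one obtains a direct sum
\begin{equation*}
 D_u = D^{\mathbb C} \oplus D^M_{u'},
\end{equation*}
where $D^{\mathbb C}$ is the standard Dolbeault operator on sections of the trivial $\mathbb C$-bundle over $\Sigma$ and $D^M_{u'}$ is the linearization of $\bar\partial_J$ at $u'$. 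In the disc case the boundary condition for $D^{\mathbb C}$ comes from $T\pi(V)\subset T\mathbb C$; by Lemma~\ref{lem:curve_openmapping} constant-projection discs are forced into the cylindrical portion of $V$, where $\pi(V)$ is a straight horizontal ray, so the boundary condition is the totally real line $\mathbb R\subset\mathbb C$.

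I would first establish the first claim by computing the cokernel of $D^{\mathbb C}$ directly. For $\Sigma = S^2$ the operator is $\bar\partial$ on the trivial complex line bundle over $\mathbb{CP}^1$, and its cokernel is $H^{0,1}(\mathbb{CP}^1,\mathcal O) = 0$. For $\Sigma = D$ with totally real boundary condition in $\mathbb R\subset\mathbb C$, the Riemann--Roch formula gives Fredholm index $1$ (complex dimension plus Maslov number zero); on the other hand, the Schwarz reflection principle shows that any holomorphic $u\colon D\to\mathbb C$ with real boundary values extends to an entire function on $\mathbb{CP}^1$, and is therefore constant. Thus $\dim\ker D^{\mathbb C} = 1$, and comparing with the index yields $\operatorname{coker} D^{\mathbb C} = 0$.

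The second assertion then follows: because $D^{\mathbb C}$ is always surjective, surjectivity of $D_u$ is equivalent to surjectivity of $D^M_{u'}$. Under the generic choices of $J$ used in the construction of the pearl complex in $M$, the operator $D^M_{u'}$ is surjective at every $u'$ in the relevant moduli space, so the standard implicit function theorem produces a smooth local manifold structure on the constant-projection locus, with tangent space $\ker D_u$. The dimension is then read off from the index splitting as the index of $D^M_{u'}$ (together with the free $p$-parameter when one is present).

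The step that will require the most care is the decoupling of the Lagrangian boundary condition, i.e.\ verifying that $u^*TV$ splits along $u(\partial\Sigma)$ as a direct sum of its $\mathbb C$- and $M$-components so that the two pieces of $D_u$ really do act on independent spaces of variations. This is where the cylindrical ends of $V$ intervene: in the cylindrical region $V = \gamma\times L$ is a genuine product, so $TV = T\gamma\oplus TL$ and the decomposition is tautological; outside this region Lemma~\ref{lem:curve_openmapping} forbids constant-projection curves entirely, so no further analysis at the boundary is needed.
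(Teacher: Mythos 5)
The paper does not actually supply a proof of this lemma; it states only that the result ``can be proven by a standard calculation'' and omits the details, so there is no argument in the text to compare yours against. Your proposal is a correct and complete filling-in of the omitted calculation, and it is precisely the kind of argument the authors had in mind: split $D_u = D^{\mathbb C}\oplus D^M_{u'}$ using $\tilde J = i\oplus J$, then kill the cokernel of the $\mathbb C$-factor by hand. Your computation of $\operatorname{coker} D^{\mathbb C}$ is right in both cases: for $S^2$ it is $H^{0,1}(\mathbb{CP}^1,\mathcal O)=0$, and for the disk with totally real boundary condition $\mathbb R\subset\mathbb C$ the index is $1+\mu=1$ while Schwarz reflection shows $\ker D^{\mathbb C}$ consists of real constants, so the cokernel vanishes. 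The observation that the boundary condition really does split, because on the cylindrical ends $TV = T\gamma\oplus TL$, is exactly the point one needs to make the direct-sum decomposition of $D_u$ legitimate, and your remark about the free $p$-parameter correctly accounts for the full index $\operatorname{ind} D_u = \operatorname{ind} D^{\mathbb C} + \operatorname{ind} D^M_{u'}$.

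One small imprecision worth flagging: you cite Lemma~\ref{lem:curve_openmapping} as showing that ``constant-projection discs are forced into the cylindrical portion of $V$.'' That lemma actually says the opposite thing --- that non-constant-projection curves are confined to $K$. The correct justification for working in the cylindrical region is simply that the hypothesis of Lemma~\ref{automatictransversality} (that $\tilde J = i\oplus J$) is only arranged outside $K\times M$, and a constant-projection curve with boundary on $V$ must have its constant value $p$ in $\pi(V)$, which outside $K$ is a union of horizontal rays with tangent line $\mathbb R$. The role of Lemma~\ref{lem:curve_openmapping} in the surrounding argument is rather to guarantee that the discs one needs to treat by this automatic-transversality device (namely those not contained in $K\times M$) are constant-projection in the first place. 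This does not affect the validity of your proof, only the attribution of which lemma is doing which job.
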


\subsection{Outline of the Proof of Theorem~\ref{thm:longexact}}\label{sec:OutlineofProof2}

Consider the compact version $V|_{[R_-,R_+]}$ of a cobordism $V$, where $R_-$ and $R_+$ are such that $V$ is cylindrical outside of $\pi^{-1}([R_-,R_+]\times \mathbb{R})$.
To prove the existence of a long exact sequence
\begin{equation*}
\xymatrix{
 \dots \ar[r]^{\delta_*} & QH_*(S) \ar[r]^{i_*} & QH(V) \ar[r]^{j_*} & QH_*(V,S) \ar[r]^{\delta_*} & QH_{*-1}(S)\ar[r]^{i_*} & \dots
}
\end{equation*}
we define a special Morse function $\tilde{f}$ on an small extension $V^{\epsilon}:=V|_{[R_- -\epsilon,R_++\epsilon]}$.
The description of such a function can also be found in~\cite{LagrCobordism}.
Let us identify the cylindrical ends of the cobordism with $[R_- -\epsilon,  R_-]\times \{a_i^-\} \times L_i^-$ and $[R_+,R_++\epsilon]\times \{a_j^+\} \times L_j^+$. 
We assume that on the cylindrical ends of $V$ the almost complex structure $\tilde{J}$ splits into $i\oplus J$ for some $J$ on $M$ and $i$ the standard complex structure on $\mathbb{R}^2$.
Likewise we assume that on the cylindrical ends of $V$ the metric $\rho$ is of the form $\rho^{\pm}\oplus \rho_M$, for some metric $\rho^{\pm}$ on $\coprod L_i^-$ and $\coprod L_j^+$.
Recall that $S$ is a collection of connected components of the boundary of $V|_{[R_-,R_+]}$. 
Let $S^{\epsilon}$ the part of the end of $V|_{[R_- - \epsilon,R_+ +\epsilon]}$ corresponding to $S$.
More general we define $S^{\eta}$ the part of the boundary of $V|_{[R_- - \eta,R_+ +\eta]}$ corresponding to $S$.
Fix an end of the cobordism belonging to $S^{\epsilon}$, say $[R_- -\epsilon,  R_-]\times \{a_i^-\} \times L_i^-$. 
On the end $[R_- -\epsilon,  R_-]\times \{a_i^-\} \times L_i^-$ we define $\tilde{f}$ to be the sum of two Morse functions, $f_i^-$ on $L_i^-$ and $\sigma_i^-$ on $[R_- -\epsilon,  R_-]$.
If the end belongs to the set $S^{\epsilon}$ we choose $\sigma_i^-$ such that it has a single maximum and no other critical points.
Likewise, if the end does not belong to $S^{\epsilon}$ we choose $\sigma_i^-$ with a single minimum and no other critical points.
We may assume that the single critical point of $\sigma_i^-$ lies at $R_- -\epsilon/2$ and that $\sigma_i^-$ is linear on the set $[R_- -\epsilon,  R_- - 3\epsilon/4]$.
Similarly we choose functions $\sigma_j^+$ on the positive ends with a single maximum, if the end belongs to $S^{\epsilon}$ and a single minimum otherwise.
\begin{figure}
\centering
 \includegraphics{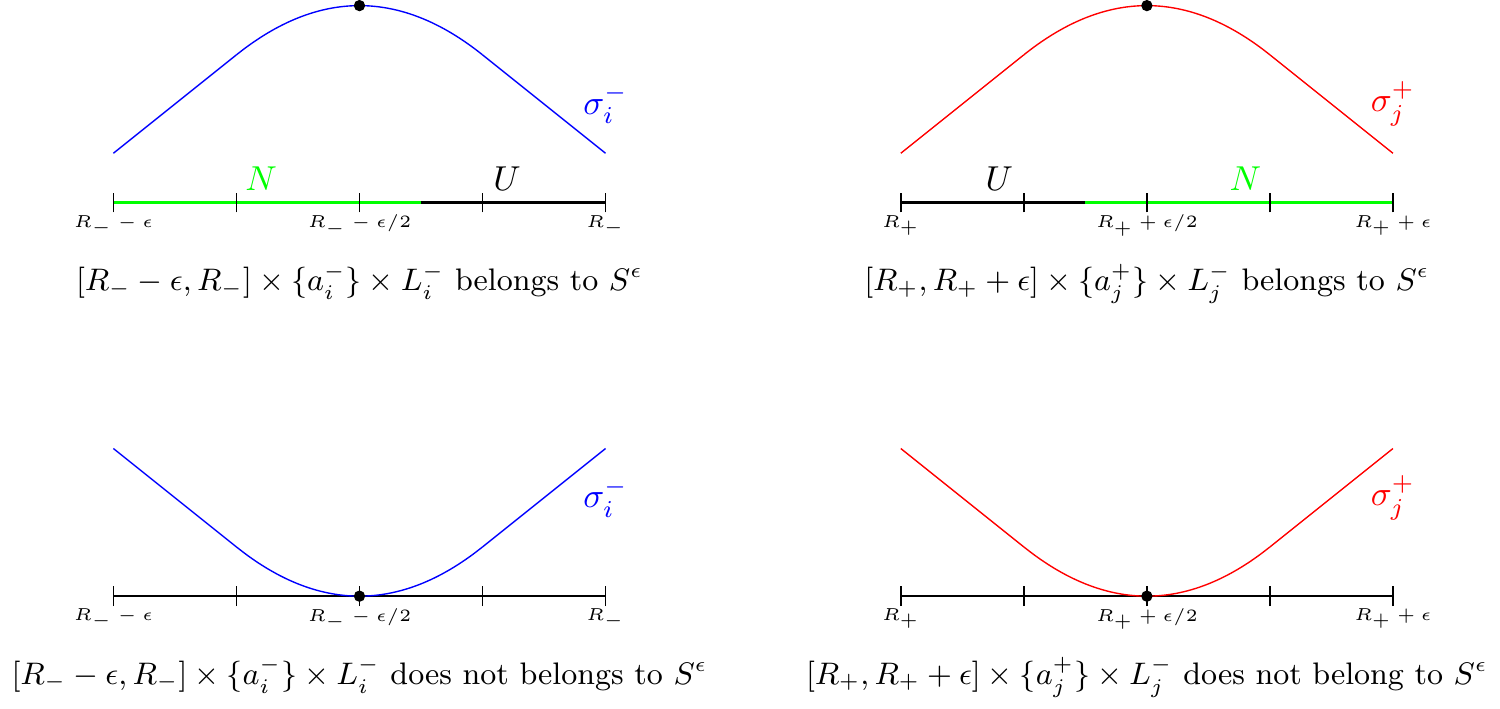}
\caption{The functions $\sigma_i^-$ and $\sigma_j^+$.}\label{sigma}
\end{figure}

Figure~\ref{sigma} illustrates the choice of the functions $\sigma_i^-$ on the left and $\sigma_j^+$ on the right.
This choice of the functions $\sigma_i^-$ and $\sigma_j^+$ ensures that the negative gradient $-\nabla f$ points outside along $S^{\epsilon}$ and inside along $\partial V^{\epsilon}\setminus S^{\epsilon}$.
Let $N$ denote the neighborhood of $S^{\epsilon}\subset \partial V^{\epsilon}$ given by 
\begin{equation}
 \coprod_{\{(R_- -\epsilon  ,i)\} \times L_i^- \subset S^{\epsilon}} [R_- -\epsilon,  R_- -\epsilon/4]\times \{a_i^-\} \times L_i^-\coprod_{\{(R_+ +\epsilon ,j)\} \times L_j^- \subset S^{\epsilon}} [R_++\epsilon/4,  R_+ +\epsilon]\times \{a_j^+\} \times L_j^- .
\end{equation}
In other words, $N$ is a union of cylindrical ends, corresponding to the set $S^{\epsilon}$.
Define $U:=V^{\epsilon}\setminus N$ and notice that $S^{\epsilon/2}\subset N$.
Restricting the function $\tilde{f}$ to the set $U$ we can see that now the negative gradient $-\nabla \tilde{f}|_{U}$ points inside along the whole boundary of $U$.
Notice that the critical points of $\tilde{f}$, $\tilde{f}|_{U}$ and $\tilde{f}|_{S^{\epsilon/2}}$ generate the complexes $C_*((V^{\epsilon},S^{\epsilon});\tilde{f},\tilde{J}) $, $C_*(U;\tilde{f}|_U,\tilde{J})$ and $C_*(S^{\epsilon/2};\tilde{f}|_{S^{\epsilon/2}},J)$ respectively.
Therefore we obtain a short exact sequence of chain complexes:
\begin{equation}
\xymatrix{
0 \ar[r] & C_k(U;\tilde{f}|_U,\tilde{J}) \ar[r]^j \ar[d]^{d_U} & C_k((V^{\epsilon},S^{\epsilon});\tilde{f},\tilde{J}) \ar[r]^{\delta} \ar[d]^{d_{(V^{\epsilon},S^{\epsilon})}} & C_{k-1}(S^{\epsilon/2};\tilde{f}|_{S^{\epsilon/2}},J) \ar[d]^{d_{S^{\epsilon/2}}} \ar[r] & 0\\
0 \ar[r] & C_{k-1}(U;\tilde{f}|_U,\tilde{J}) \ar[r]^{j} & C_{k-1}((V^{\epsilon},S^{\epsilon});\tilde{f},\tilde{J}) \ar[r]^{\delta} & C_{k-2}(S^{\epsilon/2};\tilde{f}|_{S^{\epsilon/2}},J) \ar[r] & 0 \\
},
\end{equation}
where the map $j$ is given by the inclusion of critical points and the map $\delta$ is given by the restriction to the critical points of $\tilde{f}|_S$.
For a precise definition and a proof that this is a short exact sequence of chain complexes, see Sections~\ref{sec:LQHforLC} and~\ref{chap:longexact}.
By definition (see Section~\ref{sec:LQHforLC}) the Lagrangian quantum homology $QH(V,S)$ is the homology of the chain complex $C_*((V^{\epsilon},S^{\epsilon});\tilde{f},\tilde{J})$.Clearly the homology of the chain complex $C_*(S^{\epsilon/2};\tilde{f}|_{S^{\epsilon/2}},J)$ is isomorphic to Clearly the homology of the chain complex $C_*(S^{\epsilon/2};\tilde{f}|_{S^{\epsilon/2}},J)$ is isomorphic to $C_*(S;\tilde{f}|_{S},J)$.
This short exact sequence induces a long exact sequence in homology as asserted in Theorem~\ref{thm:longexact}.
Proving the other statements in Theorem~\ref{thm:longexact} amounts to comparing elements and orientations of the zero dimensional moduli spaces used for the definition of the operations on the various quantum homologies.

\section{Setting}\label{sec:setting}

In this paper we work with connected, closed, monotone Lagrangians $L\subset 
(M^{2n},\omega)$, where $(M,\omega)$ is a tame monotone symplectic manifold.
Denote by $H_2^D(M,L)$ the image of the Hurewicz homomorphism $\pi_2(M,L)\rightarrow H_2(M,L)$.
Thus, we have that
\begin{equation}\label{eq:monotonicity}
\begin{array}{ccc}
  \omega(A)=\tau \mu(A), \ \forall   A \in H^D_2(M,L),
\end{array}
\end{equation}
where $\tau>0$ is the monotonicity constant of $L\subset(M,\omega)$.
Additionally we assume that the minimal Maslov number
\begin{equation*}
 N_L:=min\{\mu(A) | \mu(A)>0, A \in \pi_2(M,L)\},
\end{equation*}
is at least $2$.
We often use the notation $\overline{\mu}:=\frac{\mu}{N_L}$.

Denote by $c_1\in H^2(M)$ the first Chern class of the tangent bundle of $M$.
In particular $(M,\omega)$ is spherically monotone with a constant $\eta >0$, such that 
$\omega(A)=\eta c_1(A)$ for every $A\in H_2^S(M)$, where $H_2^S(M)$ denotes the image of the Hurewicz map $\pi_2(M) \rightarrow H_2(M)$.
One can show that the two constants $\mu$ and $\eta$ are related by $\eta=2\mu$.
In particular, we have $N_L | 2 C_M$, where $C_M$ is the minimal Chern number on $2$-dimensional spherical classes.

\subsection{The Ambient Quantum Homology}\label{sec:QHandFH}

Let $\Lambda=\mathcal{R}[t,t^{-1}]$ be the graded ring of Laurent polynomials in the variable $t$ of $deg(t):=-N_L$.
The ambient quantum homology is defined as $QH_*(M):= H_*(M;\mathcal{R})\otimes \Lambda$, where the grading is induced by
the grading of $H_*(M)$ and $\Lambda$.
\begin{remark}
 For the definition of the ambient quantum homology one often uses the grading 
$deg(t):= -2C_M$. 
However, since $N_L | 2C_M$ our definition is only an extension of the quantum homology 
with the grading $deg(t):=-2C_M$.
\end{remark}
Let
\begin{equation*}
 \begin{array}{ccc}
  * : QH_{k}(M)\otimes QH_{l}(M) \ \rightarrow \ QH_{k+l-2n}(M),
 \end{array}
\end{equation*}
denote the \emph{quantum intersection product}.
For more details on this subject the reader is referred to~\cite{McDuffSalamon}.

\subsection{Lagrangian Cobordism and Lagrangians with Cylindrical Ends}\label{sec:LC}

We begin this section by recalling the definition of Lagrangian cobordism as in~\cite{LagrCobordism}.
\begin{definition}\label{def:cob}
 A Lagrangian cobordism between two families of closed Lagrangian submanifolds of $M$, say $\{ L_j^+ \}_{1 \leq j \leq r_+}$ and $\{ L_i^- \}_{1 \leq i \leq r_-}$, is a cobordism $(V, \coprod_i L_i^-, \coprod_j L_j^+)$ from $\coprod_j L_j^+$ to $\coprod_i L_i^-$, such that there exists a Lagrangian embedding $V^{n+1} \subset [0,1]\times \mathbb{R} \times M$, which is of the form
\begin{equation*}
\begin{array}{ccc}
  V|_{[0,\epsilon)\times \mathbb{R}} \ = \ \coprod_i ([0, \epsilon)\times \{a_i^-\})\times L_i^- \\
 V|_{(1-\epsilon,1] \times \mathbb{R}} \ = \ \coprod_j ((1-\epsilon, 1]\times \{a_j^+\})\times L_j^+,
\end{array}
\end{equation*}
for some $\epsilon >0$.
\end{definition}

Note that in Definition~\ref{def:cob}, the Lagrangian submanifolds $(L^+_j)$ and $(L_i^-)$ may not be mutually disjoint.
We will sometimes substitute $[0,1]$ in Definition~\ref{def:cob} by a more general interval $[R_-,R_+]$ for some real numbers $R_-< R_+$. 
In this case we will still denote by $V$ the cobordism inside $[R_-,R_+]\times \mathbb{R}\times M$.
\begin{definition}
 A Lagrangian cobordism $(V,(L_i^-),(L^+_j))$ is called monotone, if $V \subset [0,1]\times \mathbb{R} \times M$ is a
monotone Lagrangian submanifold.
\end{definition}
From now on we assume Lagrangian cobordisms to be monotone and connected.
It is sometimes useful to extend the definition of Lagrangian cobordism to Lagrangian submanifolds with cylindrical ends.
For this we consider the $\mathbb{R}$-extension $\tilde{V}\subset \tilde{M}$ of a 
Lagrangian cobordism $(V,\coprod_i L_i, \coprod_j L'_j)$, which is given by
\begin{equation*}
 \tilde{V}^{n+1}:= (\coprod_i (-\infty, 0]\times \{a_i^-\}\times L_i) \cup V \cup (\coprod_j [1, \infty)\times \{a_j^+\} \times L_j)\subset \mathbb{R}^2\times M.
\end{equation*}
We identify $\mathbb{C}\cong \mathbb{R}^2$ and fix the standard complex structure $i$ on $\mathbb{R}^2$.
\begin{definition}
Let $\{ L_j^+ \}_{1 \leq j \leq r_+}$ and $\{ L_i^- \}_{1 \leq i \leq r_-}$ be two families of closed Lagrangian submanifolds of $M$.
A Lagrangian submanifold with cylindrical ends, is a Lagrangian submanifold 
$\tilde{V}^{n+1}\subset \tilde{M}^{2n+2}$ without boundary, such that
\begin{enumerate}
 \item For every $a<b$, the subset $\tilde{V}|_{[a,b]\times \mathbb{R}}$ is compact.
 \item There exist constants $R_-$ and $R_+$, with $R_- \leq R_+$, as well as integers $\{a^-_1, \cdots, a^-_{r_-}\}$ and $\{a^+_1, \cdots, a^+_{r_+}\}$, such that 
\begin{equation*}
 \begin{array}{ccl}
  \tilde{V}|_{(-\infty, R_-] \times \mathbb{R}} & = & \coprod_{i=1}^{r_-}(-\infty, R_-] \times \{ a^-_i\} \times L^-_i\\
  \tilde{V}|_{[R_+, \infty) \times \mathbb{R}} & = & \coprod_{j=1}^{r_+}[R_+,+\infty) \times \{ a^+_j\} \times L^+_j.\\
 \end{array}
\end{equation*}
We call $\tilde{V}|_{[R_-,R_+]\times \mathbb{R}}$ the compact part of $\tilde{V}$.
\end{enumerate}
Let $R \leq R_-$. We call the sets $E_R^-(\tilde{V}):=\tilde{V}|_{(-\infty, R] \times \mathbb{R}}$ the negative cylindrical end of $\tilde{V}$, and similarly for $R \geq R_+$ we call $E_R^+(\tilde{V}):=\tilde{V}|_{[R, \infty) \times \mathbb{R}}$ the positive cylindrical end of $\tilde{V}$.
\end{definition}
A Lagrangian submanifold with cylindrical ends is said to be cylindrical outside of a compact set $K\subset \mathbb{R}^2$ if $V|_{\mathbb{R}^2 \setminus K}$ is of the form $ \coprod_{i=1}^{r_-}(-\infty, R_-] \times \{ a^-_i \} \times L^-_i \cup \coprod_{j=1}^{r_+}[R_+,+\infty) \times \{a^+_j \} \times L^+_j$.
The ends of the form $ \coprod_{i=1}^{r_-}(-\infty, R_-] \times \{ a^-_i \} \times L^-_i \cup \coprod_{j=1}^{r_+}[R_+,-\infty) \times \{ a^+_j \} \times L^+_j$ are called horizontal ends.

\begin{remark}~\label{rmk:cob_cylindrical}
Since the two notions of $V$ and $\tilde{V}$ are closely related we sometimes do not distinguish between them.
\end{remark}

\section{The Quantum Homology for Lagrangian Cobordism}\label{sec:LQHforLC}

Applying Lemma~\ref{lem:curve_openmapping} and~\ref{automatictransversality} we extend the arguments of the approach in~\cite{QuantumStructures} to give a generalization to our particular setting.
Let $\tilde{V} \subset (\tilde{M}, \omega_{\mathbb{R}^2}\oplus \omega)$ be a monotone Lagrangian submanifold, which is cylindrical outside of $K\times M$, for $K\subset \mathbb{R}^2$ compact, and such that the compact part $V:=\tilde{V}|_{[R_-,R_+]\times \mathbb{R}}$ is a cobordism $(V, \{L_i^-\}_{i=1}^{r_-}, \{L_j^+\}_{j=1}^{r_+})$.
Consider a subset $I_- \subset \{1, \cdots, r_- \}$ and $J_+ \subset \{ 1, \cdots, r_+\}$. 
Let $S$ be the union
\begin{equation*}
 S= (\coprod_{i \in I_-} \{(R_-,a_i^-)\}\times L_i^-) \cup (\coprod_{j\in J_+} \{(R_+,a_j^+)\}\times L_j^+)\subset \partial V
\end{equation*}
of boundary components of $V$.
We let $V^{\epsilon}:=\tilde{V}|_{[R_- -\epsilon, R_+ +\epsilon]\times \mathbb{R}}$ denote a small extension of the compact part $V$ of $\tilde{V}$.
Here $R_-$ and $R_+$ are such that $\tilde{V}$ is cylindrical outside of $[R_-, R_+]\times\mathbb{R}$.
Similarly, $\tilde{M}^{\epsilon}:=\tilde{M}|_{[R_- -\epsilon, R_+ +\epsilon]\times \mathbb{R}}$, such that $V^{\epsilon}\subset \tilde{M}^{\epsilon}$ is a Lagrangian submanifold.
This is a compact subset of $\tilde{V}$ and a compact manifold with boundary.
Denote by $\tilde{\mathcal{J}}$ the set of all $\omega$-compatible almost complex structures on $\tilde{M}$.
As before, we choose an almost complex structure $\tilde{J}\subset \tilde{\mathcal{J}}$ on $\tilde{M}$, such that the projection $\pi: \tilde{M}\rightarrow \mathbb{R}^2$ is holomorphic outside of the set $K\times M$. 
We may assume that $K\subset [R_-, R_+]\times \mathbb{R}$ is compact and we denote the restriction of $\tilde{J}$ to $\tilde{M}^{\epsilon}$ by $\tilde{J}$, as before.
Let $S^{\epsilon}$ be the union of the connected components of the boundary of $V^{\epsilon}$ that correspond to $S$, i.e.
\begin{equation*}
 S^{\epsilon}=(\coprod_{i \in I_-} \{ (R_-- \epsilon,a_i^-)\}\times L_i^-) \cup (\coprod_{j\in J_+} \{ ( R_+ +\epsilon,a_j^+)\}\times L_j^+) \subset \partial V^{\epsilon}.
\end{equation*}
Let $\tilde{f}:V^{\epsilon}\rightarrow \mathbb{R}$ be a Morse function and $\tilde{\rho}$ a Riemannian metric on $V^{\epsilon}$, such that the pair $(\tilde{f}, \tilde{\rho})$ is Morse-Smale.
Moreover, we choose $\tilde{f}$ such that the negative gradient $-\nabla \tilde{f}$ points outside of $\partial V^{\epsilon}$ along $S^{\epsilon}$ and inside of $\partial   V^{\epsilon}$ along $\partial V^{\epsilon} \setminus S^{\epsilon}$.
In particular $-\nabla \tilde{f}$ is transverse to $\partial V^{\epsilon}$.
We call such a function $\tilde{f}$ a \emph{Morse function respecting the exit region $S^{\epsilon}$}.

Let $V^{\epsilon} \subset (\tilde{M}^{\epsilon},\omega)$ be as above.
Assume that $V^{\epsilon}$ is monotone with minimal Maslov number $N_V^{\epsilon} \geq 2$.
Let $\tilde{J}$ be an almost complex structure on $\tilde{M}^{\epsilon}$ as before.
Denote by $D$ the unit disk in $\mathbb{C}$.
To define the pearl complex for Lagrangian cobordism we introduce the following moduli spaces, which are analogous to the ones that were defined for the compact setting in~\cite{LQH} and~\cite{QuantumStructures}.
\begin{definition}
Let $x$ and $y$ be two points in $V^{\epsilon}$. Denote by $\phi$ the flow of the negative gradient $-\nabla \tilde{f}$ of $\tilde{f}$.
Let $l \geq 1$ and let $\lambda$ be a non-zero class in $H^D_2(\tilde{M}^{\epsilon},V^{\epsilon})\subset H_2(\tilde{M}^{\epsilon},V^{\epsilon})$, where $H^D_2(\tilde{M}^{\epsilon}, V^{\epsilon})$ denotes the image of $\pi_2(\tilde{M}^{\epsilon},V^{\epsilon})$ under the Hurewicz homomorphism.
Consider the space of all sequences $(u_1, \dots u_l)$ that satisfy:
\begin{itemize}
  \item[(i)] For every $i$, $u_i:(D, \partial D) \rightarrow (\tilde{M}^{\epsilon},V^{\epsilon})$is a non-constant $\tilde{J}$-holomorphic disk.
  \item[(ii)] There exists $-\infty \leq t' < 0$ such that $\phi_{t'}(u_1(-1))=x$.
  \item[(iii)] There exists $0 < t'' \leq \infty$ such that $\phi_{t''}(u_l(1))=y$.
  \item[(iv)] For every $i$ there exists $0 < t_i < \infty$ such that $\phi_{t_i}(u_i(1))=u_{i+1}(-1)$.
  \item[(v)] The sum of the classes of the $u_i$ equals $\lambda$, i.e. $\sum\limits_{i=1}^{l} [u_i]=\lambda \in H^D_2(\tilde{M}^{\epsilon},V^{\epsilon})$.
\end{itemize}
Let $\mathcal{P}_{prl}(x,y,\lambda;\tilde{f},\tilde{\rho},\tilde{J})$ be the space of all such sequences, modulo the following equivalence relation.
Two elements $(u_1, \dots u_l)$ and $(u'_1, \dots u'_k)$ are equivalent if $l=k$ and for every $i$ there exists an automorphism $\sigma_i\in Aut(D)$ with $\sigma_i(-1)=-1$, $\sigma_i(1)=1$ and $u_i'=u_i\circ \sigma$.
We call elements in $\mathcal{P}_{prl}(x,y,\lambda;\tilde{f},\tilde{\rho},\tilde{J})$ pearly trajectories connecting $x$ and $y$ of class $\lambda$.
If $\lambda=0$ set $\mathcal{P}_{prl}(x,y,0;\tilde{f},\tilde{\rho},\tilde{J})$ to be the space of parametrized trajectories of $\phi$ connecting $x$ and $y$.
We sometimes denote $\mathcal{D}:=(\tilde{f},\tilde{\rho};\tilde{J})$.
\end{definition}
\begin{remark}
If $x$ and $y$ are critical points of $\tilde{f}$ then $t'=-\infty$, $t''=\infty$, $u_1(-1) \in W^u(x)$ and $u_l(1) \in W^s(y)$, where $W^u(x)$ and $W^s(y)$ are the unstable and stable manifold of $x$ and $y$ respectively. 
If not stated otherwise, we assume from now one that $x$ and $y$ are critical points.
\end{remark}
The virtual dimension of $\mathcal{P}_{prl}(x,y,\lambda;\tilde{f},\tilde{\rho},\tilde{J})$ is 
$$\delta_{prl}(x,y,\lambda):= |x|-|y|+\mu(\lambda)-1.$$
Lemma~\ref{lem:curve_openmapping} and~\ref{automatictransversality} together with the methods in~\cite{QuantumStructures} ensure that if $\delta_{prl}(x,y,\lambda)\leq1$, the space $\mathcal{P}_{prl}(x,y,\lambda;\tilde{f},\tilde{\rho},\tilde{J})$ is a smooth manifold of the dimension equal to its virtual dimension for a generic triple $(\tilde{f},\tilde{\rho},\tilde{J})$.
Put $C_i((V^{\epsilon},S^{\epsilon});\tilde{f},\tilde{\rho},\tilde{J}):=(\langle Crit(\tilde{f}) \rangle \otimes \Lambda)_i$
and by abuse of notation write 
$$C_i((V,S);\tilde{f},\tilde{\rho},\tilde{J}):=C_i((V^{\epsilon},S^{\epsilon});\tilde{f},\tilde{\rho},\tilde{J})=(\langle Crit(\tilde{f}) \rangle \otimes \Lambda)_i.$$
To define a differential on $C_i((V^{\epsilon},S^{\epsilon});\tilde{f},\tilde{\rho},\tilde{J})$ requires that we orient the moduli spaces.
If we assume the Lagrangian cobordism $V$ to be spin, the choice of a spin structure induces an orientation on the moduli spaces of pseudo holomorphic disks with boundary in $V$.
Choose an orientation of the stable and unstable submanifolds of a Morse function. 
The moduli spaces of pearly trajectories can then be written as a fiber product of these two types of oriented manifolds, and hence is itself an oriented manifold.
More details can be found in~\cite{LagrTop}.
To abbreviate the notation we write $\overline{\mu}$ for $\frac{\mu}{N_V}$.
Now we define the differential for $x\in Crit(\tilde{f})$ by
\begin{equation}\label{def:differential_Z}
 d(x):=\smashoperator{ \sum\limits_{\begin{subarray}{c} y\in Crit(\tilde{f})\\ \delta_{prl}(x,y,0)=0\end{subarray}}} 
\sharp \mathcal{P}_{prl}(x,y,0;\mathcal{D})y + 
\smashoperator{ \sum\limits_{\begin{subarray}{c} y\in Crit(\tilde{f}), \lambda\neq0\\ \delta_{prl}(x,y,\lambda)=0 \end{subarray}}}
(-1)^{|y|} \sharp \mathcal{P}_{prl}(x,y,\lambda;\mathcal{D})y 
t^{\overline{\mu}(\lambda)},
\end{equation}
and extend it linearly over $C(V^{\epsilon},S^{\epsilon};\tilde{f},\tilde{\rho},\tilde{J})$.
Now, if $V$ is not spin, then we can work over a ground ring $\mathcal{R}$ with characteristic two and we define
\begin{equation}\label{def:differential}
 d(x):=\smashoperator{\sum\limits_{\begin{subarray}{c} y,\lambda \\  \delta_{prl}(x,y,\lambda)=0 \end{subarray}}}
\sharp_2 \mathcal{P}_{prl} 
(x,y,\lambda;\mathcal{D}) t^{\overline{\mu}(\lambda)}.
\end{equation}

The next proposition proves the first part of Theorem~\ref{thm:QuantumStructures_LQHforLC}.

\begin{proposition}\label{prop:LQHforLC}
If $\mathcal{D}_S:=(\tilde{f},\tilde{\rho},\tilde{J})$ is generic, then the pearl complex
\begin{equation}
 C((V^{\epsilon},S^{\epsilon});\mathcal{D}_S):=(\langle Crit(\tilde{f}) \rangle \otimes 
\Lambda,d)
\end{equation}
is a well-defined chain complex and its homology is independent of the choices 
of $(\tilde{f},\tilde{\rho},\tilde{J})$.
It will be denoted by $QH_*(V,S)$.
\end{proposition}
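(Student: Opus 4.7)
The plan is to follow the scheme of~\cite{QuantumStructures} for closed monotone Lagrangians, using Lemmas~\ref{lem:curve_openmapping} and~\ref{automatictransversality} to control the behavior of pearly trajectories at the cylindrical ends. Three items must be checked: (a) for a generic triple $\mathcal{D}_S$ the moduli spaces $\mathcal{P}_{prl}(x,y,\lambda;\mathcal{D}_S)$ with $\delta_{prl}(x,y,\lambda)\leq 1$ are smooth oriented manifolds of the expected dimension; (b) when $\delta_{prl}=0$ they are compact, so that $d$ is well-defined and $d\circ d=0$; (c) the resulting homology is independent of $(\tilde{f},\tilde{\rho},\tilde{J})$.

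For transversality I would split the analysis according to where the holomorphic disks in a pearly trajectory lie. If all disks have image contained in $K\times M$, where $\tilde{J}$ may be chosen generically, the standard Floer-theoretic transversality argument of~\cite{LQH,QuantumStructures} applies verbatim, combined with Morse-Smale genericity of $(\tilde{f},\tilde{\rho})$ on $V^\epsilon$. If some disk reaches the cylindrical region where $\tilde{J}$ is split, Lemma~\ref{lem:curve_openmapping} forces its $\mathbb{R}^2$-projection to be constant, so it lies in a single fiber $\{p\}\times M$ with boundary on one of the ends $L_i^\pm$; Lemma~\ref{automatictransversality} then supplies automatic transversality in the $\mathbb{R}^2$-direction, reducing matters to standard transversality in $(M,L_i^\pm)$. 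Orientations are induced by the spin structure on $V$ together with chosen orientations of the (un)stable manifolds, as in~\cite{LagrTop}.

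For compactness of the $0$-dimensional moduli spaces, the Morse flow of $-\nabla\tilde{f}$ cannot exit through $\partial V^\epsilon$ since by construction it points outward along $S^\epsilon$ and inward along $\partial V^\epsilon\setminus S^\epsilon$, while trajectories begin and end at interior critical points. By Lemma~\ref{lem:curve_openmapping} every constituent disk either has image in $K\times M$ or projects to a single point of $\mathbb{R}^2$ whose location is pinned down by the Morse flow to a bounded region. Monotonicity yields uniform energy bounds in terms of $\mu(\lambda)$, so Gromov compactness produces only limits of the three standard types listed in Section~\ref{sec:pearlcomplex}. A dimension count using $N_V\geq 2$ (together with the usual argument ruling out sphere bubbles and disk bubbles that strictly lower virtual dimension) shows that at $\delta_{prl}=0$ no such degenerations occur, hence $\mathcal{P}_{prl}(x,y,\lambda;\mathcal{D}_S)$ is compact and $d$ is a finite sum. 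For the $1$-dimensional spaces the same boundary classification shows that type~(1) breakings correspond to the coefficients of $d\circ d$, while types~(2) and~(3) cancel in signed count by the $N_V\geq 2$ argument, giving $d\circ d=0$.

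Invariance under the choice of data proceeds as in~\cite{QuantumStructures}: given two generic triples $\mathcal{D}$ and $\mathcal{D}'$, I would build comparison moduli spaces using a Morse cobordism $(F,G)$ on $V^\epsilon\times[0,1]$ whose gradient still respects the exit region $S^\epsilon$ at both ends, together with a homotopy $\tilde{J}_t$ of almost complex structures which is split on the cylindrical part; Lemmas~\ref{lem:curve_openmapping} and~\ref{automatictransversality} apply to the homotopy exactly as before. Counting $0$-dimensional comparison trajectories defines a chain map $\phi_{\mathcal{D}}^{\mathcal{D}'}$, and the standard concatenation and constant-cobordism arguments yield $\phi_{\mathcal{D}}^{\mathcal{D}}\simeq \mathrm{Id}$ and $\phi_{\mathcal{D}'}^{\mathcal{D}''}\circ \phi_{\mathcal{D}}^{\mathcal{D}'}\simeq \phi_{\mathcal{D}}^{\mathcal{D}''}$ up to chain homotopy. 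The main obstacle throughout is confining the relevant moduli spaces to a compact region despite $V^\epsilon$ being a manifold with boundary and $\tilde{M}^\epsilon$ non-compact; this is precisely the role of Lemma~\ref{lem:curve_openmapping}, which reduces every disk that reaches the cylindrical zone to a disk in a single fiber $M$, so that the closed-case analysis of~\cite{LQH,QuantumStructures} can be ported over.
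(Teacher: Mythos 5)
Your proposal follows essentially the same approach as the paper: the decomposition into transversality, compactness/$d\circ d=0$, and invariance is identical, and in each step you invoke Lemma~\ref{lem:curve_openmapping} to confine disks to fibers of $\pi$ and Lemma~\ref{automatictransversality} for automatic transversality in the $\mathbb{R}^2$-direction, exactly as the paper does. The one point the paper states slightly more explicitly --- that a potential new degeneration in the boundary of a one-dimensional moduli space, namely a flow line breaking into two pieces both crossing $\partial V^\epsilon$, is ruled out because $-\nabla\tilde f$ is transverse to the boundary --- is implicit in your observation that trajectories begin and end at interior critical points, so there is no substantive difference.
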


The proof of this proposition goes along the lines of the proof of the analogous results in~\cite{QuantumStructures}. 
As already mentioned in Section~\ref{sec:OutlineofProof} we need the following lemma.
\begin{lemma}\label{lem:traj_awayfromboundary}
 Let $\mathbf{u}\in \mathcal{P}_{prl}(x,y,\lambda;\tilde{f},\tilde{\rho},\tilde{J})$ be a pearly trajectory connecting two critical points of $\tilde{f}$.
Then $\mathbf{u}$ does not reach the boundary $\partial V^{\epsilon}$.
\end{lemma}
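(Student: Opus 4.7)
The proof proceeds by contradiction: assume that the pearly trajectory $\mathbf{u}=(u_1,\dots,u_l)$ intersects $\partial V^{\epsilon}$. A pearly trajectory is built from two kinds of pieces, segments of the flow $\phi_t$ of $-\nabla\tilde{f}$ on $V^{\epsilon}$ and the $\tilde{J}$-holomorphic disks $u_k$. I would first dispose of the flow segments. Since $-\nabla\tilde{f}$ is transverse to $\partial V^{\epsilon}$ (outward on $S^{\epsilon}$ and inward on $\partial V^{\epsilon}\setminus S^{\epsilon}$), any trajectory of $\phi_t$ that lies in $V^{\epsilon}$ cannot touch $\partial V^{\epsilon}$ at an interior time: once it meets the boundary it exits $V^{\epsilon}$ and never returns. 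The same transversality prevents $\tilde{f}$ from having critical points on $\partial V^{\epsilon}$, so in particular $x,y\in\mathrm{Int}(V^{\epsilon})$. Hence the intersection of $\mathbf{u}$ with $\partial V^{\epsilon}$ must occur on one of the disk images, and any intersection at a flow-segment endpoint must be of the form $u_k(\pm 1)\in\partial V^{\epsilon}$.

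It remains to rule out that any disk $u_k$ meets $\partial V^{\epsilon}$, and this is where Lemma~\ref{lem:curve_openmapping} enters. For each $k$, either $\pi\circ u_k$ is non-constant, in which case $\pi\circ u_k(D)\subset K\subset[R_-,R_+]\times\mathbb{R}$, so the image of $u_k$ sits in the compact slab $[R_-,R_+]\times\mathbb{R}\times M$ and is disjoint from $\partial V^{\epsilon}$ (which lies over $r=R_--\epsilon$ and $r=R_++\epsilon$); or $\pi\circ u_k$ is constant at some point $(r_k,a_k)$, in which case the whole image of $u_k$ lies in the fiber $\{(r_k,a_k)\}\times M$, with $u_k(\partial D)\subset V^{\epsilon}\cap(\{(r_k,a_k)\}\times M)$. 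If $(r_k,a_k)\notin\pi(\partial V^{\epsilon})$ the disk is again disjoint from $\partial V^{\epsilon}$ (for the interior of the disk $u_k$ meets $\partial V^{\epsilon}$ only if the boundary does, since $\partial V^{\epsilon}\cap$ fiber equals the Lagrangian piece already carrying $u_k(\partial D)$). Thus the only potentially problematic case is a constant-projection disk with $(r_k,a_k)=(R_--\epsilon,a_j^-)$ or $(r_k,a_k)=(R_++\epsilon,a_j^+)$; by symmetry I would treat the first.

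In this problematic case the entire boundary $u_k(\partial D)$ sits on the component $C:=\{(R_--\epsilon,a_j^-)\}\times L_j^-$ of $\partial V^{\epsilon}$, and I would split on whether $C\subset S^{\epsilon}$ or $C\subset\partial V^{\epsilon}\setminus S^{\epsilon}$. If $j\in I_-$, so $C\subset S^{\epsilon}$, then $-\nabla\tilde{f}$ points outward at $u_k(1)\in C$; the flow segment immediately following $u_k$, which has strictly positive length and is required to stay in $V^{\epsilon}$ until it reaches either $u_{k+1}(-1)\in V^{\epsilon}$ (if $k<l$) or the critical point $y\in V^{\epsilon}$ (if $k=l$), leaves $V^{\epsilon}$ as soon as it starts and never comes back, a contradiction. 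If $j\notin I_-$, so $C\subset\partial V^{\epsilon}\setminus S^{\epsilon}$, then $-\nabla\tilde{f}$ points inward at $u_k(-1)\in C$, so running time backwards the flow segment immediately preceding $u_k$, which must originate at either $u_{k-1}(1)\in V^{\epsilon}$ or at the critical point $x\in V^{\epsilon}$, is forced to leave $V^{\epsilon}$, again a contradiction. Both sub-cases being impossible, no disk boundary can meet $\partial V^{\epsilon}$, completing the proof.

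The main technical obstacle is the constant-projection case, since a priori $\tilde{J}$-holomorphic disks with boundary on $L_j^\pm$ do exist in every fiber over the cylindrical end, including the boundary fibers $r=R_--\epsilon$ and $r=R_++\epsilon$. What removes them is precisely the combination of Lemma~\ref{lem:curve_openmapping} with the inward/outward sign condition on $-\nabla\tilde{f}$: the sign condition forces one of the two flow segments adjacent to such a disk off $V^{\epsilon}$, which is incompatible with the hypothesis that the trajectory connects two interior critical points.
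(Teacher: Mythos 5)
Your proof is correct and follows essentially the same route as the paper's: first rule out flow segments touching $\partial V^{\epsilon}$ via transversality of $-\nabla\tilde{f}$ and the absence of boundary critical points, then invoke Lemma~\ref{lem:curve_openmapping} to show that any disk touching the boundary must have constant projection, which forces an adjacent flow segment to meet $\partial V^{\epsilon}$, a contradiction. Your version is somewhat more detailed (you explicitly split on whether the boundary component lies in $S^{\epsilon}$ or in $\partial V^{\epsilon}\setminus S^{\epsilon}$ to pick which adjacent segment fails), but the paper's terser observation that both adjacent segments would then touch the boundary already yields the same contradiction.
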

\begin{proof}
Recall that by definition $-\nabla \tilde{f}$ is transverse to the boundary $\partial V^{\epsilon}$. 
In particular, there are no critical points of $\tilde{f}$ on $\partial V^{\epsilon}$.
Thus, non of the trajectories $\phi_t(u_i(1))$, $i=1, \dots l$ and $\phi_t(x)$ can reach the boundary.
If one of the $\tilde{J}$-holomorphic disks touches the boundary then by Lemma~\ref{lem:curve_openmapping} $\pi \circ u$ is constant. 
Hence the trajectories ending and starting at $u_i(-1)$ and $u_i(1)$ would reach $\partial V^{\epsilon}$, which is a contradiction.
\end{proof}
Now we explain the additional steps and ideas needed to generalize the arguments to give a proof of Proposition~\ref{prop:LQHforLC}.
\begin{proof}[Proof of Proposition~\ref{prop:LQHforLC}]
The proof consists of three steps.
The first step is covered by Lemma~\ref{lem:curve_openmapping}, ~\ref{automatictransversality} and~\ref{lem:traj_awayfromboundary}.
More precisely, Lemma~\ref{lem:curve_openmapping} implies Lemma~\ref{lem:traj_awayfromboundary}. 
Lemma~\ref{automatictransversality} ensures that we can use the same methods as in~\cite{QuantumStructures} to show that the moduli spaces of pearly trajectories of virtual dimension at most $1$ form smooth manifolds and that they are compact if their dimension is zero.

The second part of the proof amounts to show that $d\circ d=0$.
Let $\mathcal{P}$ be a one dimensional moduli space of pearly trajectories.
We will argue that the boundary of the compactification $\partial \overline{\mathcal{P}}$ has the same description as for the compact case.
Recall that for the compact setting in~\cite{QuantumStructures} the boundary of a one dimensional moduli space of pearly trajectories corresponds to one of the following three options,
\begin{enumerate}\label{enum:boundary}
 \item[(1)] One of the flow lines of the negative gradient of the Morse function breaks at a critical point.
 \item[(2)] One of the flow lines of the negative gradient of the Morse functions contracts to a constant point.
 \item[(3)] Bubbling of a pseudo-holomorphic disc of the pearly trajectory.
\end{enumerate}
Since the Lagrangian $V^{\epsilon}$ has a boundary, it could be that a pearly trajectory in $\mathcal{P}$ converges to something, which is not of the form given in the description above. 
More specific, what could happen is that a flow line of $-\nabla \tilde{f}$ breaks into two flow lines, both crossing the boundary $\partial V^{\epsilon}$.
But since the negative gradient $-\nabla \tilde{f}$ is transverse to each boundary component, such a breaking cannot exist.
Therefore we have, as in the compact setting, that
\begin{equation*}
 \partial\overline{\mathcal{P}}=\{ \text{elements of type (1)} \} \cup \{ \text{elements of type (2)} \}\cup \{ \text{elements of type (3)} \}.
\end{equation*}
The rest of the proof is analogous to the compact case.

As a last step it is left to show that the definition of the quantum homology is independent of the choice of the data $\mathcal{D_S}=(\tilde{f},\tilde{\rho},\tilde{J})$. 
Recall that we need a smooth family of almost complex structures $\tilde{J}_t=:\mathbb{J}$ on $V^{\epsilon}$, that all fulfill the properties needed for the definition of $QH(V,S)$.
For every $t$, we may choose $\tilde{J}_t=i\oplus J$ outside of $K\times M$, where $J$ is an almost complex structure on $M$.
Then the homotopy $\tilde{J}_t$ is constant equal to $i\oplus J$ outside of $K\times M$.
Clearly we have an analogous result of Theorem~\ref{lem:traj_awayfromboundary} for the comparison moduli spaces $\mathcal{P}_{comp}$.
Hence, the rest of the proof follows as in~\cite{QuantumStructures}.
\end{proof}
\begin{remark}
The extension $V^{\epsilon}$ and its boundary $S^{\epsilon}$, as well as $\tilde{M}^{\epsilon}$ are useful for the definition of the quantum homology $QH(V,S)$.
However, if there is no ambiguity we will from now on write $V$, $\tilde{M}$ and $S$ for 
the extensions $V^{\epsilon}$, $\tilde{M}^{\epsilon}$ and $S^{\epsilon}$.
\end{remark}

\section{The Augmentation}\label{sec:augmentation}
The aim of this section is to describe an augmentation map of the quantum homology.
Most of the results are generalizations of the results given in~\cite{QuantumStructures}.

The following lemma gives a specific Morse function on $V$, which is useful for the construction of the augmentation map.
\begin{lemma}\label{lem:singlemin}
There exists a Morse function $\tilde{f}$ on $V$, with $-\nabla \tilde{f}$ pointing inside along the boundary $\partial V$ and with a single minimum, which we call $x_0$.
\end{lemma}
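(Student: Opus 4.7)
The plan is to construct $\tilde{f}$ in two stages: first arrange the correct behavior along $\partial V$ on a collar, producing some Morse function with the required inward gradient condition, and then cancel superfluous minima using standard Morse-theoretic handle cancellation.

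For the first stage, I would fix a collar neighborhood $\mathcal{N}\cong \partial V\times[0,\varepsilon)$ of $\partial V$ in $V$, with $\partial V$ corresponding to $t=0$ and $t$ increasing into the interior, and equip $\mathcal{N}$ with a product-type Riemannian metric. On $\mathcal{N}$ define $\tilde{f}_0(x,t):=C-t$ for some constant $C>0$; then $-\nabla \tilde{f}_0=\partial_t$ points strictly inward and $\tilde{f}_0$ has no critical points on $\mathcal{N}$. On the complement $V\setminus \mathcal{N}'$ of a slightly smaller collar $\mathcal{N}'\subset\mathcal{N}$, pick any Morse function that matches $\tilde{f}_0$ along the gluing region, and interpolate with a bump function supported in $\mathcal{N}\setminus\mathcal{N}'$. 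A small generic perturbation in the interior makes the result Morse while preserving the boundary condition, yielding a Morse function $\tilde{f}_1$ on $V$ with $-\nabla \tilde{f}_1$ pointing inward along $\partial V$ and all critical points contained in $V\setminus \mathcal{N}'$.

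For the second stage, I use Smale's first cancellation theorem (see Milnor, \emph{Lectures on the h-Cobordism Theorem}) applied inside $V\setminus\mathcal{N}'$. Each minimum of $\tilde{f}_1$ corresponds to a $0$-handle in the associated handle decomposition. Since $V$ is connected, any two distinct minima can be joined by a path, and perturbing this path to a gradient-like trajectory produces an index-$1$ critical point together with a unique connecting trajectory that allows cancellation of a $\{0,1\}$-handle pair. The cancellation can be localized in a small neighborhood in the interior, so the modified function is still Morse and still has $-\nabla\tilde{f}$ pointing inward along $\partial V$. Iterating reduces the number of minima to exactly one; call it $x_0$.

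The main subtlety is not the existence of the initial Morse function with the correct boundary behavior (which is routine), but verifying that Smale's cancellation can be carried out in the present setting with boundary. The key point is simply that the collar carries no critical points and that cancellation is a local modification in a neighborhood of an embedded arc connecting the paired critical points; choosing this arc disjoint from $\mathcal{N}'$ guarantees that the inward-gradient condition along $\partial V$ is undisturbed. After finitely many cancellations we obtain the required $\tilde{f}$ with a unique minimum $x_0$.
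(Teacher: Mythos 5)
Your proof is correct and takes essentially the same approach as the paper: the paper likewise starts from a Morse function with $-\nabla \tilde{f}$ pointing inward along $\partial V$ and then invokes Hirsch's elimination of extra index-$0$ critical points (the same $\{0,1\}$-handle cancellation you cite from Milnor's h-cobordism lectures), observing that the cancellations occur in the interior and hence leave the inward-gradient boundary condition untouched.
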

\begin{proof}
In~\cite{Hirsch} Hirsch gives a proof of the existence of a Morse function with a unique minimum on a closed manifold.
The methods from~\cite{Hirsch} work also in this more general setting, since the idea of the proof is a stepwise elimination of the critical points of minimal index until there in only one left.
This is done by adjusting the Morse function such that three critical points of index $0$, $0$ and $1$ are locally replaced by one critical point of index $0$.
It is easy to see that this can be done away from the boundary if $-\nabla \tilde{f}$ is transverse to it. 
Moreover, since $-\nabla \tilde{f}$ points inside along the whole boundary this procedure gives a Morse function on $V$ with a single minimum $x_0$.
For more details on these methods the reader is referred to~\cite{Hirsch}.
\end{proof}

\begin{proposition}\label{prop:min_notboundary}
 Let $\tilde{f}$ be a Morse function on $V$ with $-\nabla \tilde{f}$ pointing inside along the
boundary $\partial V$ and with a single minimum, say $x_0$.
Then the minimum $x_0$ is not a boundary of the pearl complex of $V$.
\end{proposition}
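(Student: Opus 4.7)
The plan is to argue in two stages: first, a degree and Maslov-index counting argument reduces the question to a purely Morse-theoretic one; second, classical Morse theory on a compact manifold with boundary finishes the job.

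For the first stage, recall that $|x_0|=0$. A generator of the form $y\otimes t^k$ lies in $C_1(V;\tilde f,\tilde\rho,\tilde J)$ precisely when $|y|=1+kN_V$, and $d(y\otimes t^k)=d(y)\,t^k$ contains $x_0\otimes t^0$ only if $d(y)$ contains the term $x_0\otimes t^{-k}$; equivalently, only if there is a pearly class $\lambda\in H_2^D(\tilde M,V)$ with $\overline\mu(\lambda)=-k$ and $\delta_{prl}(y,x_0,\lambda)=|y|+\mu(\lambda)-1=0$. Because $|y|\geq 0$ and $N_V\geq 2$, the degree condition forces $k\geq 0$. On the other hand, if $\lambda\neq 0$ then $\lambda$ is realised by a configuration with at least one non-constant $\tilde J$-holomorphic disk whose boundary lies in $V$, so by monotonicity $\mu(\lambda)\geq N_V\geq 2>0$, which forces $k<0$. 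This contradiction rules out $\lambda\neq 0$, and the only surviving case is $\lambda=0$, $k=0$, $|y|=1$. Thus the coefficient of $x_0$ in $d(z)$ for any $z\in C_1$ is an $\mathcal R$-linear combination of signed Morse counts $\#\mathcal M(y,x_0)$ with $y$ an index-$1$ critical point of $\tilde f$.

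For the second stage, I would argue that each such signed Morse count is zero. Since $-\nabla\tilde f$ is transverse to $\partial V$ and points inward, every forward flow line stays in $V$ and converges to a critical point, so the Morse sub-complex of $(\tilde f,\tilde\rho)$ (obtained by discarding all disk contributions) is exactly the standard Morse complex of the compact manifold with boundary $V$ of ``inward'' type, and therefore computes $H_*(V;\mathcal R)$. Because $V$ is connected, $H_0(V;\mathcal R)=\mathcal R$; but the degree-$0$ piece of this Morse complex is the free $\mathcal R$-module $\mathcal R\,x_0$ on the unique index-$0$ critical point. Hence the Morse differential $C_1^{\mathrm{Morse}}\to\mathcal R\,x_0$ must vanish identically, which together with the first stage gives the proposition.

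The only delicate point I anticipate is the orientation bookkeeping underlying the assertion that the Morse sub-complex has $H_0=\mathcal R$: in characteristic $2$ there is nothing to check, while in general one may either appeal to the classical identification of the inward-type Morse complex with a CW chain complex of $V$, or argue directly that for $y$ of index $1$ the closure $\overline{W^u(y)}$ is a compact $1$-manifold whose two ends necessarily limit to $x_0$ (they cannot exit through $\partial V$ thanks to the inward-pointing condition, and $x_0$ is the unique index-$0$ critical point), with the two incoming rays carrying opposite coorientations at $x_0$ so that the signed count cancels.
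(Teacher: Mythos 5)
Your proof is correct and follows essentially the same route as the paper's: a dimension and Maslov-index count (using $N_V\ge 2$ and $|x_0|=0$) shows that the only nonzero contributions to the coefficient of $x_0\otimes t^0$ come from $\lambda=0$, $k=0$, and an index-$1$ critical point, reducing the claim to the Morse differential; one then concludes from the fact that the Morse subcomplex computes $H_*(V;\mathcal R)$ with a single degree-zero generator. The paper states the Morse-theoretic endgame slightly more economically (it only needs $H_0\neq 0$ and linearity to conclude $x_0\notin\operatorname{im}d$, rather than the marginally stronger assertion that $d_1$ vanishes identically), but the substance is the same.
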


\begin{proof}
It suffices to consider zero dimensional moduli spaces $\mathcal{P}_{prl} (x,x_0,\lambda)$.
Suppose first that $\mu(\lambda)\neq 0$ and let $x\in Crit(\tilde{f})\setminus \{ x_0 \}$.
Then we compute 
$$dim \mathcal{P}_{prl}(x, x_0, \lambda)=|x|-|x_0|+\mu(\lambda)-1\geq 1-0+2-1=2>0.$$
Hence, if $\lambda \neq 0$ then $\mathcal{P}_{prl}(x, x_0, \lambda)$ is not zero dimensional. 
Therefore we may assume that $\lambda = 0$ and in particular the index of $x$ is $1$. 
As a consequence the pearl differential is equivalent to the Morse differential in this case.
But the Morse homology of $V$ is non trivial in degree zero and $x_0$ is the only critical point of index zero. 
We conclude that $x_0$ is not a boundary.
\end{proof}
\begin{proposition}\label{prop:augm}
There exists a canonical, degree preserving augmentation 
\begin{equation*}
 \epsilon_{V}:QH_*(V) \rightarrow \Lambda,
\end{equation*}
which is a $\Lambda$-module map.
\end{proposition}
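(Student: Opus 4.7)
The plan is to build $\epsilon_V$ at the chain level using the model triple supplied by Lemma~\ref{lem:singlemin}, deduce the chain-map property directly from the argument behind Proposition~\ref{prop:min_notboundary}, and then use the invariance part of Proposition~\ref{prop:LQHforLC} to promote it to a canonical map on $QH_*(V)$. Fix a triple $\mathcal{D}=(\tilde f,\tilde\rho,\tilde J)$ in which $\tilde f$ has a unique minimum $x_0$ and $-\nabla\tilde f$ points inward along all of $\partial V$ (the hypothesis $S=\emptyset$). On the chain complex $C(V;\mathcal{D})=\mathcal{R}\langle\mathrm{Crit}(\tilde f)\rangle\otimes\Lambda$, define $\epsilon_V$ to be the $\Lambda$-linear extension of $x_0\mapsto 1$ and $x\mapsto 0$ for every other critical point. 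Since $|x_0|=0=|1|$, the map is degree-preserving, and $\Lambda$-linearity is built in by construction.

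Next, I verify $\epsilon_V\circ d=0$. By $\Lambda$-linearity it suffices to check that for every $x\in\mathrm{Crit}(\tilde f)$ the coefficient of $x_0\cdot t^0$ in $d(x)$ vanishes. A contribution to this coefficient would come from a moduli space $\mathcal{P}_{prl}(x,x_0,\lambda;\mathcal{D})$ with $\bar\mu(\lambda)=0$ and virtual dimension $\delta_{prl}(x,x_0,\lambda)=|x|+\mu(\lambda)-1=0$. Because non-constant disks satisfy $\mu(\lambda)\ge N_V\ge 2$, the first condition forces $\lambda=0$, and the second then forces $|x|=1$; hence only the Morse part of the differential can contribute. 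But the hypothesis that $-\nabla\tilde f$ points inward along $\partial V$ makes the Morse complex of $\tilde f$ compute $H_*(V;\mathcal{R})$, and since $V$ is connected with $x_0$ as the sole degree-zero generator of the complex, the Morse differential $C_1\to C_0$ must be identically zero; this is precisely the input already extracted in the proof of Proposition~\ref{prop:min_notboundary}. Consequently $\epsilon_V$ is a chain map and descends to a $\Lambda$-linear, degree-preserving map $\epsilon_V:QH_*(V;\mathcal{D})\to\Lambda$.

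To upgrade this to a canonical map on $QH_*(V)$, I transport $\epsilon_V$ along the comparison isomorphisms of Proposition~\ref{prop:LQHforLC}: for any generic triple $\mathcal{D}'$, define $\epsilon_V$ on $QH_*(V;\mathcal{D}')$ by composing with the canonical isomorphism $QH_*(V;\mathcal{D}')\xrightarrow{\sim}QH_*(V;\mathcal{D})$. Independence of the reference triple requires checking that for any two single-minimum triples $\mathcal{D}_0,\mathcal{D}_1$, the comparison chain map $\phi_{\mathcal{D}_0}^{\mathcal{D}_1}$ intertwines the two chain-level augmentations up to chain homotopy; this reduces, via the same dimension count, to showing that contributions to the $x_0'$-coefficient of $\phi_{\mathcal{D}_0}^{\mathcal{D}_1}(x)$ can only be Morse-theoretic, and that the underlying Morse-cobordism comparison sends fundamental class to fundamental class. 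The main obstacle is precisely this last compatibility step, where one must rule out non-constant disk bubbles contributing to the $x_0'$-coefficient of the comparison map; it is handled by the same ingredients used throughout Sections~\ref{sec:OutlineofProof} and~\ref{sec:LQHforLC}, namely Lemma~\ref{lem:curve_openmapping} to keep disks and flow lines away from $\partial V$, Lemma~\ref{automatictransversality} to secure transversality in the cylindrical region, and the inequality $\mu(\lambda)\ge N_V\ge 2$ to exclude non-constant bubbles from the degree-zero part. With this in place, $\epsilon_V:QH_*(V)\to\Lambda$ is well-defined, canonical, $\Lambda$-linear and degree-preserving, as asserted.
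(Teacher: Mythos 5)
Your proof follows the same strategy as the paper: define $\epsilon_V$ on the chain level via the single-minimum Morse function of Lemma~\ref{lem:singlemin}, verify the chain-map property by the dimension count with $N_V\geq 2$ (as in Proposition~\ref{prop:min_notboundary}), and establish canonicity via the comparison morphisms, reducing the key case $x=x_0$ to a Morse-cobordism computation. The only slight looseness is the phrase that ``$\Lambda$-linearity'' reduces the check to the $t^0$-coefficient of $d(x)$ --- the reduction actually comes from the degree/monotonicity argument (negative powers of $t$ cannot appear and positive powers are excluded for $|x|\geq 1$) --- but your subsequent dimension count supplies exactly this, so the argument is sound.
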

\begin{proof}
For a choice of data $\mathcal{D}=(\tilde{f},\tilde{\rho},\tilde{J})$ we define $\epsilon_{V,\mathcal{D}}$ on the chain level as follows.
\begin{equation}\label{eq:augm}
 \begin{array}{cccc}
  \epsilon_{V,\mathcal{D}}: & Crit(\tilde{f}) & \rightarrow & \Lambda\\
              & x 	&\mapsto & \begin{cases}
				  1 \text{, if }  |x| =0\\
				0 \text{, else }\\
                  	         \end{cases},
 \end{array}
\end{equation}
and extend it linearly over $C((V,S);\mathcal{D})$.
Here, the differential on $\Lambda$ is trivial. 
Assume now that $\tilde{f}$ is a Morse function as in Lemma~\ref{lem:singlemin}.
Then, $\epsilon_{V,\mathcal{D}}$ is a chain map, as the following calculation shows.
Clearly $\epsilon_{V,\mathcal{D}}(d(x_0))=0=d(\epsilon_{V,\mathcal{D}}(x_0))$.
If $x\neq x_0$ then $d\circ \epsilon_{V,\mathcal{D}}(x)=0$ and
\begin{equation*}
\begin{array}{ccl}
  \epsilon_{V,\mathcal{D}} \circ d(x) & = & \epsilon_{V,\mathcal{D}} \left( \sum\limits_{\begin{subarray}{c} y\\
\delta_{prl}=0\\
                                                                        \end{subarray}} 
\sharp \mathcal{P}_{prl}(x,y,0)y + 
\sum\limits_{\begin{subarray}{c} y,\lambda\neq0\\
\delta_{prl}=0\\
                                                                        \end{subarray}}
(-1)^{|y|} \sharp \mathcal{P}_{prl}(x,y,\lambda)y 
t^{\overline{\mu}(\lambda)} \right) \\
			& = & \epsilon_{V,\mathcal{D}} \left(  \sum\limits_{\begin{subarray}{c}
\delta_{prl}=0\\
                                                                        \end{subarray}} 
\sharp \mathcal{P}_{prl}(x,x_0,0)x_0 + 
\sum\limits_{\begin{subarray}{c} \lambda\neq0\\
                                                                         
\delta_{prl}=0\\
                                                                        \end{subarray}}
(-1)^{|x_0|} \sharp \mathcal{P}_{prl}(x,x_0,\lambda)x_0 
t^{\overline{\mu}(\lambda)} \right) \\
			& = & 0,\\
\end{array}
\end{equation*}
where the second equality holds since $\tilde{f}$ has the single minimum $x_0$ and 
$\epsilon_{V,\mathcal{D}}(y)=0$ whenever $y\neq x_0$.
The rest follows from Proposition~\ref{prop:min_notboundary}.

Let $\mathcal{D'}$ be another generic data.
To show that the map is canonical we need to check that $\epsilon_{V,\mathcal{D'}}\circ \phi^{\mathcal{D'}}_{\mathcal{D}} = \epsilon_{V,\mathcal{D}} $,
where $\phi^{\mathcal{D'}}_{\mathcal{D}}$ is the comparison isomorphism between the chain complex.
A dimension calculation shows that the comparison moduli space $\mathcal{P}_{comp}(x,y',\lambda)$, with $|y'|=0$ has dimension zero if and only if $|x|=-\mu(\lambda)=0$. 
In other words, there are no non-empty moduli spaces of dimension zero connecting a critical point $x\neq x_0$ of $\tilde{f}$ to a critical point $y'$ of $\tilde{f}'$ of index zero.
Thus, for $x\neq x_0$ we see that 
$$\epsilon_{V,\mathcal{D'}}\circ \phi^{\mathcal{D'}}_{\mathcal{D}}(x)=0=\epsilon_{V,\mathcal{D}}(x).$$
Consider the case that $x=x_0$. 
By definition, $\epsilon_{V,\mathcal{D'}}(y')=0$ for every $y'$ with $|y'|\neq0$. 
If $|y'|=0$, then $\mu(\lambda)$ must be zero by the same argument as above. 
We compute
\begin{equation}
\begin{array}{ccc}
  \epsilon_{V,\mathcal{D'}}\circ \phi^{\mathcal{D'}}_{\mathcal{D}}(x_0) &=&
\epsilon_{V,\mathcal{D'}} \left( \sum\limits_{\begin{subarray}{c} |y'|=0\\
\delta_{comp}=0 \end{subarray}} \sharp \mathcal{P}_{comp}(x_0,y',0)y'\right)\\
 &=& \sum\limits_{\begin{subarray}{c} |y'|=0\\
\delta_{comp}=0 \end{subarray}} \sharp \mathcal{P}_{comp}(x_0,y',0).\\
\end{array}
\end{equation}
Recall that the moduli spaces were defined using a Morse cobordism $(F,G)$ between $(\tilde{f},\tilde{\rho})$ and $(\tilde{f}',\tilde{\rho}')$. 
Notice that $\sum\limits_{\begin{subarray}{c}|y'|=0 \\ \delta_{comp}=0 \end{subarray}} \sharp \mathcal{P}_{comp}(x_0,y',0)y'$ 
is the Morse part of the differential for the Morse function $H$ of the critical point $x_0$ of $H$.
As a critical point of $H$, $x_0$ has index one and it lies on the boundary.
Standard Morse theoretic arguments show that the Morse differential of $x_0$ must be one critical point of $H$ of index zero. 
In other words, $\sum\limits_{\begin{subarray}{c} |y'|=0\\ \delta_{comp}=0 \end{subarray}} \sharp\mathcal{P}(x_0,y',0)=1$, which proves the statement.
\end{proof}

\section{The Quantum Product}\label{sec:QuantumProduct}

In this section we define a product on the quantum homology.
\begin{proposition}\label{prop:productandunit}
 There exists a $\Lambda$-bilinear map
\begin{equation*}
\begin{array}{ccc}
  *:QH_i(V,S)\otimes QH_j(V,S) & \rightarrow & QH_{i+j-(n+1)}(V,S) \\
  \alpha \otimes \beta & \mapsto & \alpha * \beta,\\
\end{array}
\end{equation*}
which endows $QH_*(V,S)$ with the structure of a (possibly non-unital) ring.
Moreover, if $S=\partial V$, then the product turns $QH_*(V,\partial V)$ into a ring with a unit.
\end{proposition}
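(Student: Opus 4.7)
The plan is to adapt Biran--Cornea's construction of the Lagrangian quantum product from \cite{QuantumStructures} to the cobordism setting, using Lemma~\ref{lem:curve_openmapping}, Lemma~\ref{automatictransversality}, and the reasoning of Lemma~\ref{lem:traj_awayfromboundary} to keep all moduli spaces away from $\partial V^{\epsilon}$. For each triple $(x,y,z)\in Crit(\tilde{f})^3$ and each class $\lambda\in H_2^D(\tilde{M}^{\epsilon}, V^{\epsilon})$, I introduce the moduli space $\mathcal{P}_{\ast}(x,y,z,\lambda;\mathcal{D}_S)$ of $Y$-shaped pearly trees: two pearly trajectories entering from $x$ and $y$ and one exiting toward $z$, meeting at a common $\tilde{J}$-holomorphic disk with three cyclically ordered boundary marked points (and with the central disk allowed to degenerate to a point, in which case the three trajectories simply share a common point of $V^{\epsilon}$), such that the sum of all disk classes equals $\lambda$. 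Its virtual dimension is $|x|+|y|-|z|+\mu(\lambda)-(n+1)$.

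First I would establish transversality, compactness and orientability of the $0$- and $1$-dimensional strata of $\mathcal{P}_{\ast}$. Away from $\partial V^{\epsilon}$ this is identical to the closed case. Near $\partial V^{\epsilon}$, Lemma~\ref{lem:curve_openmapping} forces any disk whose image touches a neighbourhood of $\partial V^{\epsilon}$ to project constantly to $\mathbb{R}^2$, Lemma~\ref{automatictransversality} supplies transversality for such fiberwise disks, and the transversality of $-\nabla \tilde{f}$ to $\partial V^{\epsilon}$ prevents Morse segments from escaping; together these show that no pearly tree in $\mathcal{P}_{\ast}$ reaches $\partial V^{\epsilon}$. The chain-level product is then defined by
\begin{equation*}
 x \ast y := \sum_{z,\lambda}\varepsilon(x,y,z,\lambda)\, \sharp\mathcal{P}_{\ast}(x,y,z,\lambda;\mathcal{D}_S)\, z\, t^{\overline{\mu}(\lambda)},
\end{equation*}
with signs $\varepsilon$ coming from the spin structure (or mod $2$ counts if $\mathrm{char}(\mathcal{R})=2$), extended $\Lambda$-bilinearly. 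Analyzing $\partial\overline{\mathcal{P}_{\ast}}$ in $1$-dimensional families via the usual breaking/bubbling trichotomy yields the Leibniz rule $d(x\ast y) = dx\ast y + (-1)^{|x|} x \ast dy$; associativity and independence of $\mathcal{D}_S$ follow by standard parametric moduli-space arguments, which transport from \cite{QuantumStructures} once the boundary issues have been dispatched.

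For the unit statement when $S=\partial V$, the gradient $-\nabla\tilde{f}$ points outward along the entire boundary, so by an upside-down version of Lemma~\ref{lem:singlemin} I may choose $\tilde{f}$ with a unique interior maximum $x_{\max}$ of index $n+1$. The class $[x_{\max}]$ is a cycle: chains of degree $n+1$ in $C((V,\partial V);\mathcal{D}_{\partial V})$ are spanned by $x_{\max}$ (since $|t|=-N_V\le -2$ and no critical point has index exceeding $n+1$), so $d(x_{\max})$ reduces to a Morse-type sum in degree $n$; since $H_{n+1}(V,\partial V)$ is nontrivial (in the orientable case, or for $\mathrm{char}(\mathcal{R})=2$) and injects into $QH_{n+1}(V,\partial V)$, this forces $d(x_{\max})=0$. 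To see that $[x_{\max}]$ is a two-sided unit, consider $\mathcal{P}_{\ast}(x_{\max},x,z,\lambda)$, of virtual dimension $|x|-|z|+\mu(\lambda)$. For $\lambda=0$ the count reproduces the Morse intersection product with the fundamental-class representative $x_{\max}$, which acts as the identity on $H_{\ast}(V,\partial V)$. For $\lambda\neq 0$ the pearl contributions lie in the image of $d$, which follows by interpolating through Morse functions whose unstable cell at $x_{\max}$ sweeps out an open dense subset of $V^{\epsilon}$; the symmetric argument handles $x \ast x_{\max}$.

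The main obstacle is this last step --- showing that the pearl corrections with $\lambda\neq 0$ vanish on homology. Demonstrating that these corrections are boundaries requires a parametric moduli-space comparison analogous to the one in \cite{QuantumStructures} for closed monotone Lagrangians, combined with the fact that the unstable manifold of $x_{\max}$ can be taken to be open and dense in $V^{\epsilon}$; one must also verify that the interpolating pearly trees satisfy our boundary conditions, which is again where Lemma~\ref{lem:curve_openmapping} and Lemma~\ref{automatictransversality} are invoked to prevent them from touching $\partial V^{\epsilon}$.
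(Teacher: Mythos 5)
Your construction of the product itself — figure-$Y$ pearly trajectories, transversality and compactness away from $\partial V^{\epsilon}$ via Lemma~\ref{lem:curve_openmapping}, Lemma~\ref{automatictransversality}, and the reasoning of Lemma~\ref{lem:traj_awayfromboundary}, then the standard Leibniz/associativity/invariance arguments ported from~\cite{QuantumStructures} — is essentially identical to what the paper does with Definition~\ref{def:P_prod}, Lemma~\ref{lem:prod} and Lemma~\ref{lem:assoc}. The cycle argument for $x_{\max}$ is also essentially the same degree count as the paper's Proposition~\ref{prop:min_notboundary} applied upside down.

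The gap is in your unit argument. You claim that for $\lambda\neq 0$ the contributions to $x_{\max}\ast x$ "lie in the image of $d$" and propose to establish this "by interpolating through Morse functions whose unstable cell at $x_{\max}$ sweeps out an open dense subset of $V^{\epsilon}$." This is not a proof: density of $W^u(x_{\max})$ does not by itself produce a chain whose differential cobounds these terms, and you never exhibit a parametric moduli space carrying out the interpolation or verify it stays away from $\partial V^{\epsilon}$. More importantly, the assertion is unnecessary because the relevant moduli spaces are generically \emph{empty}. In the paper's Lemma~\ref{lem:unit_rel} one takes $\tilde{f}'=\tilde{f}''$; then a figure-$Y$ trajectory contributing to $x_{\max}\ast y$ determines, by forgetting the $x_{\max}$-leg, a pearly trajectory from $y$ to $z$. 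The constraint $0=\delta_{prod}(x_{\max},y,z,\lambda)=|y|-|z|+\mu(\lambda)$ (using $|x_{\max}|=n+1$) forces that trajectory to live in a moduli space of negative virtual dimension $|y|-|z|+\mu(\lambda)-1=-1$ whenever $\mu(\lambda)\neq0$, hence it is empty for generic data. So $x_{\max}\ast y = y$ already on the chain level, with no homological bookkeeping needed. You should replace your final paragraph with this dimension count. You also omit the check that the class $[x_{\max}]$ is canonical, i.e.\ preserved by the comparison morphism between two generic choices of Morse data, which the paper does by a further dimension count on $\mathcal{P}_{comp}$.
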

We start by defining the necessary moduli spaces.
\begin{definition}\label{def:P_prod}
Fix three Morse functions $\tilde{f}$, $\tilde{f}'$ and $\tilde{f}''$ on $V$ respecting the exit region $S$, a Riemannian metric $\tilde{\rho}$ and a generic almost complex structure $\tilde{J}\in \tilde{\mathcal{J}}$ with the properties described in Section~\ref{sec:LQHforLC}.
Let $x$, $y$ and $z$ be critical points in $L$ of $\tilde{f}$, $\tilde{f}'$ and $\tilde{f}''$, respectively. 
Let $\lambda$ be a non-zero class in $H^D_2(\tilde{M},V)\subset H_2(\tilde{M},V)$, where $H^D_2(\tilde{M},V)$ denotes the image of $\pi_2(\tilde{M},V)$ under the Hurewicz homomorphism.
Consider the space of all tuples $(\mathbf{u}, \mathbf{u'}, \mathbf{u''}, v)$ that satisfy:
\begin{itemize}
  \item[(i)] $v:(D, \partial D) \rightarrow (\tilde{M},V)$ is a 
$\tilde{J}$-holomorphic disk, possibly constant.
$v$ is also called the core of $(\mathbf{u}, \mathbf{u'}, \mathbf{u''}, v)$.
  \item[(ii)] Denote $\tilde{x}=v(e^{-2\pi i/3})$, $\tilde{y}=v(e^{2\pi i/3})$ and $\tilde{z}=v(1)$. The points $\tilde{x}$, $\tilde{y}$ and $\tilde{z}$ are no critical points. Then:
	      $\mathbf{u}\in 
\mathcal{P}_{prl}(x,\tilde{x},\mathbf{\xi};\tilde{J},\tilde{\rho},\tilde{f})$,
	      $\mathbf{u'}\in 
\mathcal{P}_{prl}(y,\tilde{y},\mathbf{\xi'};\tilde{J},\tilde{\rho},\tilde{f}')$,
	      $\mathbf{u''}\in 
\mathcal{P}_{prl}(\tilde{z},z,\mathbf{\xi''};\tilde{J},\tilde{\rho},\tilde{f}'')$,
      for some $\mathbf{\xi},\mathbf{\xi'}, \mathbf{\xi''} \in H^D_2(\tilde{M},V)$.
  \item[(iii)] $\mathbf{\xi}+ \mathbf{\xi'}+ \mathbf{\xi''}+[v]=\mathbf{\lambda}$.
\end{itemize}
Denote by $\mathcal{P}_{prod}(x,y,z,\lambda;\tilde{f},\tilde{f}',\tilde{f}'',\tilde{\rho},\tilde{J})$ the space of all such sequences.
An element in this space is called a figure-$Y$ pearly trajectory from $x$ and $y$ to $z$.
\end{definition}
We define the virtual dimension of $\mathcal{P}_{prod}(x,y,z,\lambda;\tilde{f},\tilde{f}',\tilde{f}'',\tilde{\rho},\tilde{J})$ by 
$$\delta_{prod}(x,y,z,\mathbf{\lambda}):=|x|+|y|-|z|+\mu(\lambda)-(n+1).$$
As before $\mathcal{P}_{prod}(x,y,z,\lambda;\tilde{f},\tilde{f}',\tilde{f}'',\tilde{\rho},\tilde{J})$ can be oriented by writing it as the fiber product of oriented moduli spaces.
Assume that $(\tilde{f},\tilde{f}',\tilde{f}'',\tilde{\rho})$ are in general position.
If $\delta_{prod}(x,y,z,\mathbf{\lambda})\leq1$, the moduli spaces $\mathcal{P}_{prod}(x,y,z,\lambda)$ form smooth manifolds of dimension equal to their virtual dimension and they are compact if $\delta_{prod}(x,y,z,\mathbf{\lambda})=0$. 
This is a result of Lemma~\ref{automatictransversality} and ~\ref{lem:prod} below.
The quantum product on the chain level is defined by counting the elements in the zero dimensional moduli spaces of figure-$Y$ pearly trajectories.
More precisely, for every $x\in Crit(\tilde{f})$, $y \in Crit(\tilde{f}')$ and we define
\begin{equation}\label{eq:product}
 x * y := \sum\limits_{ \begin{subarray}{c}
                         z,  \lambda \\ \delta_{prod}=0\\
                         \end{subarray}
} \sharp \mathcal{P}_{prod}(x, y, z, 
\mathbf{\lambda};\tilde{f},\tilde{f}',\tilde{f}'',\tilde{\rho}, \tilde{J}) z
t^{\overline{\mu}(\lambda)},
\end{equation}
where the sum runs over all $z\in Crit(\tilde{f}'')$ and $\mathbf{\lambda} \in H^D_2(\tilde{M},V)$, such that $\delta_{prod}(x,y,z,\mathbf{\lambda})=0$.
With this definition we have the following results.
\begin{lemma}\label{lem:prod}
 For a generic choice of data, the operation in~(\ref{eq:product}) is well defined and it is a chain map.
It induces a product in homology, which is independent of the choices made in the construction.
\end{lemma}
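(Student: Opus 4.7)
The plan is to mirror the argument used in the proof of Proposition~\ref{prop:LQHforLC} and adapt the compact-case proof from~\cite{QuantumStructures} to the cobordism setting, with Lemmas~\ref{lem:curve_openmapping} and~\ref{automatictransversality} playing their usual role of confining the relevant moduli spaces to the interior of $V^{\epsilon}$ and securing transversality near the boundary. First I would verify that for a generic quadruple $(\tilde{f},\tilde{f}',\tilde{f}'',\tilde{\rho})$ (together with the almost complex structure $\tilde{J}$ already fixed to be split outside $K\times M$), the moduli spaces $\mathcal{P}_{prod}(x,y,z,\lambda;\cdot)$ are smooth oriented manifolds of dimension $\delta_{prod}(x,y,z,\lambda)$ whenever $\delta_{prod}\le 1$. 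The transversality for non-constant disks is standard; the possibly constant core $v$ and any disks with constant projection to $\mathbb{R}^2$ are handled by Lemma~\ref{automatictransversality}. An analogue of Lemma~\ref{lem:traj_awayfromboundary} for figure-$Y$ trajectories then follows: on each of the three Morse strands the flow of $-\nabla\tilde{f}$ (resp.\ $\tilde{f}'$, $\tilde{f}''$) is transverse to $\partial V^{\epsilon}$ and has no critical points there, and any disk touching the boundary would be forced by Lemma~\ref{lem:curve_openmapping} to have constant projection, which would push the adjacent Morse strand to the boundary, a contradiction. Compactness of the zero-dimensional components then follows exactly as in the compact case, so the sum~(\ref{eq:product}) is finite and $*$ is well defined.

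Next I would verify that $*$ is a chain map, i.e.\ that $d(x*y) = (dx)*y + (-1)^{|x|} x*(dy)$ on chains. As usual this is obtained by describing the boundary of the compactification of the one-dimensional moduli spaces $\mathcal{P}_{prod}(x,y,z,\lambda)$ with $\delta_{prod}=1$. By the transversality of $-\nabla\tilde{f}$, $-\nabla\tilde{f}'$, $-\nabla\tilde{f}''$ to $\partial V^{\epsilon}$, no breaking or convergence of pearly pieces into the boundary can occur, so the only codimension-one strata are the familiar three types: (1) breaking of a Morse flow line on one of the three strands at a critical point, (2) a flow segment contracting to a constant point, and (3) disk bubbling from one of the pearls or from the core $v$. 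Type (1) produces exactly the elements counted by $(dx)*y$, $x*(dy)$ and $d(x*y)$ (via the outgoing strand), while types (2) and (3) cancel out by the standard monotonicity/dimension argument (since $N_V\ge 2$ and the two-disk configurations admit a free reparametrization) exactly as in~\cite{QuantumStructures}. Signs are read off from the fibered-product orientation of the moduli spaces, as explained there.

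Finally, independence of the induced product on the choice of data $(\tilde{f},\tilde{f}',\tilde{f}'',\tilde{\rho},\tilde{J})$ is proved by a comparison construction: given two generic choices, I would introduce figure-$Y$ \emph{comparison} moduli spaces built from Morse cobordisms between the three pairs of Morse data and a homotopy $\tilde{J}_t$ of almost complex structures that is kept of the form $i\oplus J_t$ outside $K\times M$. Counting zero-dimensional elements of these comparison spaces defines a chain homotopy between the two product operations, and the usual analysis of the one-dimensional strata—again safeguarded by the boundary-confinement provided by Lemmas~\ref{lem:curve_openmapping} and~\ref{automatictransversality}—shows this is a chain map intertwining the two products up to chain homotopy, so the induced product on $QH_*(V,S)$ is canonical.

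The main obstacle is the codimension-one boundary analysis of the one-dimensional $\mathcal{P}_{prod}$: one has to rule out any new stratum coming from the fact that $V^{\epsilon}$ has boundary, and to organize the gluing/cancellation so that the three types (1)–(3) really suffice. The key input is that $-\nabla\tilde{f}$ points transversely outside along $S^{\epsilon}$ and inside along $\partial V^{\epsilon}\setminus S^{\epsilon}$ on all three strands, combined with Lemma~\ref{lem:curve_openmapping} applied to the pearls and to the core $v$; once this is in place the sign bookkeeping and the disk-bubbling cancellation are routine adaptations of the compact arguments.
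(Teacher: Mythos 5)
Your proposal is correct and follows essentially the same route as the paper: confine the figure-$Y$ moduli spaces to the interior using the open-mapping argument of Lemma~\ref{lem:curve_openmapping} and the automatic transversality of Lemma~\ref{automatictransversality}, then import the compact-case machinery from~\cite{QuantumStructures}. The only cosmetic difference is that the paper packages the boundary-avoidance step into the general tree-modeled Lemma~\ref{lem:traj_awayfromboundary_general} and cites it, whereas you re-derive that confinement directly for figure-$Y$ trajectories; the content is the same.
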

\begin{lemma}\label{lem:assoc}
 The product
\begin{equation*}
 *:QH_i(V,S)\otimes QH_k(V,S)\rightarrow QH_{i+k-(n+1)}(V,S)
\end{equation*}
is associative.
\end{lemma}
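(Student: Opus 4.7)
The plan is to follow the standard chain-level argument for associativity of the Lagrangian quantum product (as carried out in~\cite{QuantumStructures}) and check that the new phenomena arising from the non-compactness of $\tilde V$ and the presence of the boundary $\partial V$ do not introduce extra terms. Concretely, I would fix four generic Morse functions $\tilde f_1,\tilde f_2,\tilde f_3,\tilde f_4$ on $V^{\epsilon}$ respecting the exit region $S^{\epsilon}$, together with $\tilde\rho$ and $\tilde J$, and define, for critical points $x\in\mathrm{Crit}(\tilde f_1)$, $y\in\mathrm{Crit}(\tilde f_2)$, $z\in\mathrm{Crit}(\tilde f_3)$, $w\in\mathrm{Crit}(\tilde f_4)$ and a class $\lambda\in H^D_2(\tilde M,V)$, a moduli space $\mathcal{P}_{\mathrm{assoc}}(x,y,z,w,\lambda)$ of ``figure-$H$'' pearly trajectories. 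An element consists of a pair of non-constant or constant $\tilde J$-holomorphic disks $(v_1,v_2)$ with boundary in $V^{\epsilon}$, joined by a single gradient segment of $\tilde f_2$ of arbitrary non-negative length between a boundary point of $v_1$ and a boundary point of $v_2$, with three further pearly strands attached at marked boundary points and running from $x,y$ into the configuration and a fourth strand running from it to $w$. The length of the connecting segment parametrises a real parameter in $[0,\infty]$, and the total disk class equals $\lambda$. Its virtual dimension is $|x|+|y|+|z|-|w|+\mu(\lambda)-2(n+1)$.

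Next I would establish the analytic properties of this moduli space. Transversality for generic $(\tilde f_1,\ldots,\tilde f_4,\tilde\rho,\tilde J)$ is proved exactly as in~\cite{QuantumStructures}, using Lemma~\ref{automatictransversality} to handle disks with constant $\mathbb{R}^2$-projection so that transversality holds also near $\partial V^{\epsilon}$. Lemma~\ref{lem:curve_openmapping} combined with the argument of Lemma~\ref{lem:traj_awayfromboundary} shows that no figure-$H$ configuration connecting critical points of $\tilde f_i$ can touch $\partial V^{\epsilon}$: any $\tilde J$-holomorphic disk is either constant in the $\mathbb{R}^2$-factor or has image inside the compact set $K$, and the negative gradient is transverse to $\partial V^{\epsilon}$ and pointing in the correct direction to forbid flow lines that reach the boundary between critical points. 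This guarantees compactness of the zero-dimensional strata and the usual Gromov-type compactness description of the one-dimensional strata.

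I would then study $\overline{\mathcal{P}}_{\mathrm{assoc}}(x,y,z,w,\lambda)$ when the virtual dimension equals $1$. As in the closed case its boundary decomposes into (i) trajectories in which the connecting segment has length $0$, giving a figure-$Y$-like configuration whose count equals the $w$-coefficient of $x*(y*z)$; (ii) trajectories in which the connecting segment has length $+\infty$, which splits the configuration into two figure-$Y$'s and whose count equals the $w$-coefficient of $(x*y)*z$; (iii) breakings of one of the four pearly strands at a critical point, which account for $d\circ H + H\circ d$ for a chain homotopy $H$ assembled from $\mathcal{P}_{\mathrm{assoc}}$; and (iv) sphere/disk bubbling or flow-line contraction, which sums to zero by the usual dimension and minimal Maslov number argument. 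Crucially, the extra potential boundary component that could appear here, namely a pearly strand approaching $\partial V^{\epsilon}$, is excluded by the preceding paragraph, so the list of limits is identical to the closed case.

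The main obstacle I anticipate is keeping precise control of signs and of the correspondence between the length-zero/length-infinity ends and the two bracketings of the triple product, particularly because the cobordism $V$ is spin only under an extra assumption and the orientations on the moduli spaces of pearly trajectories are defined via a fiber-product construction as in~\cite{LagrTop}. Once this bookkeeping is done, summing over $\lambda$ and over intermediate critical points gives the chain-level identity $(x*y)*z - x*(y*z) = d H(x,y,z) + H(d x,y,z) \pm H(x,dy,z) \pm H(x,y,dz)$, and passing to homology yields associativity of $*$ on $QH_*(V,S)$. The invariance of the product already proved in Lemma~\ref{lem:prod} makes the result independent of the auxiliary choices of $\tilde f_1,\ldots,\tilde f_4$.
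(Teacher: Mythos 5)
Your proposal is correct and takes essentially the same route as the paper: the paper's proof consists of the single sentence that, by Lemma~\ref{lem:traj_awayfromboundary_general}, the associativity argument of~\cite{QuantumStructures} carries over verbatim, and you have unpacked exactly that argument, re-deriving the boundary-avoidance step from Lemma~\ref{lem:curve_openmapping} and Lemma~\ref{automatictransversality} (which are precisely the ingredients packaged into Lemma~\ref{lem:traj_awayfromboundary_general}). One bookkeeping slip to fix: the length of the connecting segment contributes an extra real parameter, so the virtual dimension of the figure-$H$ moduli space should be $|x|+|y|+|z|-|w|+\mu(\lambda)-2(n+1)+1$; with that correction the one-dimensional strata have boundary given by the zero-dimensional figure-$Y$ compositions computing $(x*y)*z$ and $x*(y*z)$ at the two ends of $[0,\infty]$, as needed.
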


At this point we give a more general version of Lemma~\ref{lem:traj_awayfromboundary}.
We will use the notations introduced in~\cite{QuantumStructures}, which describe the 
elements of the moduli spaces as configurations modeled over planar oriented trees.
The edges correspond to Morse functions on $V$ and the vertices (except) the entry and 
exit vertices correspond to $\tilde{J}$-holomorphic disks with their boundary in $V$.
The entry and exit points are critical points of the Morse functions.

\begin{lemma}~\label{lem:traj_awayfromboundary_general}
Let $\mathcal{P}$ be a moduli space modeled over planar trees, as described above.
Assume 
that all the edges corresponding to flow lines of Morse functions on
$V$ respecting the exit region $S\subset \partial V$.
Suppose that the almost complex structure $\tilde{J}$ is such that the projection $\pi$ is $\tilde{J}$-holomorphic
outside of $K\times M$.
Then, the elements of the moduli space $\mathcal{P}$ cannot reach the boundary $\partial V$.
In particular, if the virtual dimension of $\mathcal{P}$ is at most $1$, it is a smooth 
manifold of dimension equal to its virtual dimension and it is compact if $0$-dimensional.
Moreover, the compactification of a one dimensional such moduli space has the same 
description as the compactification
of the analogous moduli space as in the setting of a compact connected Lagrangian. 
\end{lemma}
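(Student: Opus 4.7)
The plan is to follow the strategy of Lemma~\ref{lem:traj_awayfromboundary}, but to argue vertex by vertex on the underlying planar oriented tree. First I would show that no $\tilde{J}$-holomorphic disk appearing as a vertex of a configuration $\mathbf{u}\in\mathcal{P}$ can touch $\partial V^{\epsilon}$, then that no Morse flow line attached to such a disk can reach $\partial V^{\epsilon}$ at an interior time; once those two facts are established, the smoothness, dimension, compactness, and the three-type description of the boundary of the compactification will follow by transporting the arguments of \cite{QuantumStructures} essentially verbatim, exactly as in the proof of Proposition~\ref{prop:LQHforLC}.

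For the first claim, if a disk $u$ in the configuration meets $\partial V^{\epsilon}$, then Lemma~\ref{lem:curve_openmapping} gives two options. Either $\pi\circ u(D)\subset K$, which is ruled out because $K$ is disjoint from $\pi(\partial V^{\epsilon})$ under the standing assumption that $V$ is cylindrical outside $K\times M$. Or $\pi\circ u$ is constant, in which case $u(D)$ lies in a single fiber of $\pi$, and since it touches $\partial V^{\epsilon}$ the whole of $u(D)$ is contained in one connected component of $\partial V^{\epsilon}$, i.e.\ in some $\{(R_--\epsilon,a_i^-)\}\times L_i^-$ or $\{(R_++\epsilon,a_j^+)\}\times L_j^+$. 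Such a component is either wholly in $S^{\epsilon}$ or wholly in $\partial V^{\epsilon}\setminus S^{\epsilon}$. In the tree modelling $\mathcal{P}$, the disk vertex corresponding to $u$ carries both incoming and outgoing edges (this is automatic for every moduli space considered in this paper: pearl, figure-$Y$, module action, inclusion, and their comparison variants). If the component containing $u(D)$ lies in $S^{\epsilon}$, then at every outgoing marked point $-\nabla\tilde{f}_i$ points out of $V^{\epsilon}$, so the outgoing flow line leaves $V^{\epsilon}$ instantaneously and cannot reach its intended interior target. In the other case the symmetric argument applies to an incoming marked point and the backward flow. Either way we obtain a contradiction, so disks cannot touch $\partial V^{\epsilon}$.

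The second claim is then immediate: transversality of $-\nabla\tilde{f}_i$ to $\partial V^{\epsilon}$ prevents a flow line from crossing $\partial V^{\epsilon}$ and returning, and the endpoints of each edge are either critical points (automatically interior) or disk marked points (interior by the first claim). Once this is in hand, I would invoke Lemma~\ref{automatictransversality} to treat the $\tilde{J}$-holomorphic disks with constant $\pi$-projection whose moduli spaces are not handled by the standard techniques, and then import \cite{QuantumStructures} for the remaining analytic input, noting that no new degeneration involving $\partial V^{\epsilon}$ appears in the compactification precisely because the previous two claims rule it out. The main obstacle is the orientation matching in the first claim: one has to exploit the fact that the exit-region condition forces $-\nabla\tilde{f}_i$ to point consistently along any fixed boundary component of $V^{\epsilon}$, which is incompatible with that component supporting a disk vertex carrying both an incoming and an outgoing edge. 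Verifying this in the generality of ``planar oriented trees as above'' reduces to a case-by-case inspection of the moduli spaces introduced in the rest of the paper, which I would carry out as each such moduli space is defined.
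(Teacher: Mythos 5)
Your proof is correct and follows the same route as the paper, which simply declares the proof ``analogous to Lemma~\ref{lem:traj_awayfromboundary} and the compactification part of Proposition~\ref{prop:LQHforLC}'' without elaboration; your vertex-by-vertex case analysis (boundary component in $S^{\epsilon}$ handled via an outgoing edge, boundary component in $\partial V^{\epsilon}\setminus S^{\epsilon}$ via an incoming edge) is exactly the content that is left implicit there. Two small imprecisions that do not affect the argument for this particular lemma: when $\pi\circ u$ is constant it is $u(\partial D)$, not all of $u(D)$, that lies in a boundary component of $V^{\epsilon}$ (the interior of the disk lives in the fiber of $\tilde M^{\epsilon}$ over that point); and the parenthetical claim that every disk vertex automatically carries both a $V$-incoming and a $V$-outgoing edge is not quite right for the inclusion moduli spaces, where the terminal disk's outgoing tail is a trajectory of a Morse function on $\tilde M_T$ rather than on $V$ --- but those spaces are treated by the more general Lemma~\ref{lem:traj_awayfromboundary_moregeneral}, not by the lemma at hand, whose hypothesis explicitly restricts to edges on $V$ with entry/exit vertices being critical points on $V$.
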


\begin{proof}[Proof of Lemma~\ref{lem:traj_awayfromboundary_general}]
The proof is analogous to the proof of Lemma~\ref{lem:traj_awayfromboundary} and the part 
of the proof of~\ref{prop:LQHforLC}, where the boundary of the compactification 
$\overline{\mathcal{P}}$ is described.
\end{proof}

\begin{proof}[Proof of Lemma~\ref{lem:prod}]
It follows directly form Lemma~\ref{lem:traj_awayfromboundary_general} and the methods in~\cite{QuantumStructures}.
\end{proof}

\begin{proof}[Proof of Lemma~\ref{lem:assoc}]
By Lemma~\ref{lem:traj_awayfromboundary_general}, the proof is the same as in~\cite{QuantumStructures}.
\end{proof}

For the existence of a unit in the case that $S=\partial V$ we need that $-\nabla 
\tilde{f}$ is transverse to $\partial V$ and points outside at the boundary.
\begin{proposition}\label{lem:singlemax_rel}
There exists a Morse function on $V$ with a single maximum $m$ and such that
$-\nabla \tilde{f}$ points outside at $\partial V$.
\end{proposition}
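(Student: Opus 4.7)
The proof reduces immediately to Lemma \ref{lem:singlemin} by reversing the sign. The plan is as follows. First, apply Lemma \ref{lem:singlemin} to $V$ (using $S = \emptyset$, which yields the inward-pointing boundary behavior on all of $\partial V$) to obtain a Morse function $\tilde{g}: V \to \mathbb{R}$ possessing a unique minimum $x_0$ and satisfying the property that $-\nabla \tilde{g}$ is transverse to $\partial V$ and points inside along every boundary component. Next, set $\tilde{f} := -\tilde{g}$, using the same Riemannian metric that was used to compute $\nabla \tilde{g}$.

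The function $\tilde{f}$ is Morse with the same critical set as $\tilde{g}$, but the Morse indices are flipped via the formula $\mathrm{ind}_{\tilde{f}}(p) = (n+1) - \mathrm{ind}_{\tilde{g}}(p)$. Consequently the unique index-$0$ critical point $x_0$ of $\tilde{g}$ becomes the unique index-$(n+1)$ critical point of $\tilde{f}$, i.e.\ a single maximum $m := x_0$. Moreover $-\nabla \tilde{f} = \nabla \tilde{g} = -(-\nabla \tilde{g})$, so the negative gradient of $\tilde{f}$ is the pointwise reversal of the negative gradient of $\tilde{g}$ along $\partial V$. In particular it remains transverse to $\partial V$, and it points outward at every boundary point where $-\nabla \tilde{g}$ pointed inward, which is all of $\partial V$.

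There is no substantive obstacle here: the symmetry between the properties ``inward-pointing negative gradient with single minimum'' and ``outward-pointing negative gradient with single maximum'' makes the proposition an immediate corollary of Lemma \ref{lem:singlemin}. Alternatively, one could repeat verbatim the Hirsch-style argument sketched in the proof of Lemma \ref{lem:singlemin}, but applied to critical points of maximal index rather than minimal index, eliminating pairs of index $(n+1), n$ critical points away from $\partial V$ until only one index-$(n+1)$ critical point remains; transversality of $-\nabla \tilde{f}$ to $\partial V$ ensures that all such local modifications can be performed in the interior of $V$.
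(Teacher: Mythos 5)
Your argument is exactly the paper's: apply Lemma~\ref{lem:singlemin} to get a Morse function with a single minimum and inward-pointing negative gradient, then negate it. The elaboration of the index flip and the gradient reversal, while not spelled out in the paper's one-line proof, is correct and adds nothing that changes the approach.
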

\begin{proof}
 This follows directly from Proposition~\ref{lem:singlemin}.
For example we take $-\tilde{f}$ for a Morse function $\tilde{f}$ given by \ref{lem:singlemin}.
\end{proof}

\begin{lemma}\label{lem:unit_rel}
There exists a canonical element $e_{(V,\partial V)}\in QH_{n+1}(V,\partial V)$, which is 
a unit with respect to the quantum product, i.e. $e_{(V,\partial V)}*x=x$ for every $x\in 
QH_*(V,\partial V)$.
\end{lemma}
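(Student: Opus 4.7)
The plan is to take the candidate unit to be $e_{(V,\partial V)} := [m] \in QH_{n+1}(V,\partial V)$, where $m$ is the unique maximum of a Morse function $\tilde{f}$ supplied by Proposition~\ref{lem:singlemax_rel}, and then to verify three items in turn: that $m$ is a cycle in the pearl complex, that $[m]$ acts as a two-sided unit for $*$, and that the resulting class is canonical.

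First I would argue that $m$ is a cycle. Any disk-bubble contribution to $d(m)$ of class $\lambda \neq 0$ would require a critical point $y$ with $|y| = |m| - 1 + \mu(\lambda) = n + \mu(\lambda) \geq n+2 > \dim V$, which is impossible; thus only the Morse part ($\lambda = 0$) survives. Since $V$ is spin (or we work in characteristic $2$), $V$ is orientable, so by Poincar\'e--Lefschetz duality $H_{n+1}(V,\partial V;\mathcal{R}) \cong H^{0}(V;\mathcal{R}) \cong \mathcal{R}$ is generated by the relative fundamental class. Since $m$ is the only critical point of index $n+1$, the Morse chain group in top degree is free of rank one over $\Lambda$, and the non-vanishing of $H_{n+1}$ forces the Morse differential of $m$ to vanish. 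This produces a well-defined class $[m] \in QH_{n+1}(V,\partial V)$.

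The core of the argument is to prove $[m]*[x] = [x]$ for every $[x]\in QH_*(V,\partial V)$. The geometric idea is that, because $m$ is the global maximum of $\tilde{f}$ (pointing outward along $\partial V$), its unstable manifold $W^{u}(m)$ is open and dense in $V$, so the flow-line leg of a figure-$Y$ pearly trajectory with first input $m$ imposes no genuine restriction beyond what is already encoded in a comparison-type pearly moduli space. Concretely, one sets up the product using three Morse functions $(\tilde{f},\tilde{f}',\tilde{f}'')$ respecting the exit region $\partial V$, and then shows that, after a generic choice of auxiliary data, the chain map $y \mapsto m * y$ from $C((V,\partial V);\tilde{f}',\tilde{\rho},\tilde{J})$ to $C((V,\partial V);\tilde{f}'',\tilde{\rho},\tilde{J})$ is chain-homotopic to the comparison map $\phi_{\tilde{f}'}^{\tilde{f}''}$. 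Since the latter induces the identity on $QH_*(V,\partial V)$, the identity $[m]*[x] = [x]$ follows. The chain homotopy is produced from a one-parameter family of moduli spaces obtained by shrinking the $m$-leg and tracking the resulting degenerations (Morse breaks, disk bubbling, and node formations), exactly as in the closed-Lagrangian proof of~\cite{QuantumStructures}. Lemma~\ref{lem:traj_awayfromboundary_general} keeps all relevant configurations away from $\partial V$ throughout, so the transversality and compactness inputs are identical to the closed case, and Lemma~\ref{automatictransversality} handles the disks with constant projection to $\mathbb{R}^2$.

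Canonicality is then automatic: any two units in an associative ring coincide, since $e = e * e' = e'$. Right unitality follows from left unitality either by a symmetric argument or via the two-sided module/algebra structure asserted in Theorem~\ref{thm:QuantumStructures_LQHforLC}. The hard part is the identification of $\mathcal{P}_{prod}(m,\cdot,\cdot,\cdot)$ with the comparison moduli space in step two; the subtlety is to match the boundary of the one-parameter family of figure-$Y$ moduli spaces with that of the comparison moduli space, and to verify that the orientations induced by the chosen spin structure on $V$ match under this identification.
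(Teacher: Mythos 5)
Your overall skeleton matches the paper's: take $m$ to be the unique maximum from Proposition~\ref{lem:singlemax_rel}, show $m$ is a cycle by a dimension argument plus $H_{n+1}(V,\partial V)\cong\mathcal{R}$, then show $[m]$ is a unit and that the class is canonical. Steps one and the justification of the cycle claim are essentially identical to the paper's.

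Where you diverge is in the key unitality step. You propose to use three distinct Morse functions $(\tilde f,\tilde f',\tilde f'')$ and to show that $y\mapsto m*y$ is chain-homotopic to the comparison morphism $\phi_{\tilde f'}^{\tilde f''}$, by shrinking the $m$-leg in a one-parameter family of figure-$Y$ moduli spaces. The paper instead takes $\tilde f'=\tilde f''$, so the comparison map becomes the identity, and then establishes $m*y=y$ by a direct chain-level computation: since $\tilde f'=\tilde f''$, a zero-dimensional figure-$Y$ configuration $\mathcal{P}_{prod}(m,y,z,\lambda)$ induces (by collapsing the two outer legs into one pearly trajectory from $y$ to $z$) a pearly trajectory whose virtual dimension is $|y|-|z|+\mu(\lambda)-1$; since $|m|=n+1$ forces $|y|-|z|+\mu(\lambda)=0$, this would be $-1$ unless $\mu(\lambda)=0$ and $y=z$, and then the uniqueness of the flow line of $-\nabla\tilde f$ from $m$ through $y$ (because $m$ is the only maximum, with $-\nabla\tilde f$ pointing outward along $\partial V$) gives $\sharp\mathcal{P}_{prod}(m,y,y,0)=1$ with the correct sign from the unstable-manifold orientation convention. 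This avoids the degeneration/gluing analysis entirely. Your chain-homotopy route is a legitimate strategy used elsewhere (e.g., in PSS-type arguments), but you flag yourself that the degeneration and orientation bookkeeping is the hard part and you do not carry it out; the paper's $\tilde f'=\tilde f''$ trick buys a much shorter and more elementary argument.

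A smaller difference: you derive canonicality from ``units in a ring are unique,'' which implicitly uses that the comparison isomorphisms are ring maps (i.e., the product's choice-independence from Lemma~\ref{lem:prod}). That is fine, but the paper instead verifies canonicality by a direct dimension count showing $\phi_{\mathcal{D}}^{\mathcal{D}'}(m)=m'$ on the chain level, which is self-contained at this point in the exposition. Also, you assert right unitality; the paper only proves left unitality and the lemma statement only asserts $e_{(V,\partial V)}*x=x$, so that part is extra (and would require a separate symmetric argument since $*$ is not obviously commutative).
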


\begin{proof}
Let $\tilde{f}$ be a Morse function as in Lemma~\ref{lem:singlemax_rel}.
We want to show that the single maximum $m$ represents a unit in the quantum homology $QH_*(V,\partial V)$.
Any non-void moduli space $\mathcal{P}_{prod}(m,x,\lambda)$ with $\mu(\lambda)>0$ is of dimension bigger than $0$.
Indeed, $\mathcal{P}_{prod}(m,x,\lambda)$ has dimension 
$$|m|-|x|+\mu(\lambda)-1=(n+1)-|x|+\mu(\lambda)-1.$$
If $\mu(\lambda)>0$ then 
$$dim\mathcal{P}_{prod}(m,x,\lambda)\geq (n+1)-n+2-1=2.$$
Thus, to compute $d(m)$ it suffices to consider the moduli spaces $\mathcal{P}_{prod}(m,x,0)$. 
This corresponds to computing the Morse differential. 
For the Morse differential, $m$ clearly is a cycle, since $H_{n+1}(V,\partial V)=\mathcal{R}$.
We compute $m * y$ on the chain level, to show that $m$ represents the unit.
Choose $(\tilde{f}, \tilde{f}',\tilde{f}'',\tilde{\rho})$ in general position.
We may assume $\tilde{f}'=\tilde{f}''$.
We have
\begin{equation*}
\begin{array}{cccc}
  m * y & = & \sum\limits_{\begin{subarray}{c}
			z,\lambda\\
			\delta_{prod}=0\\
			 \end{subarray}
} \sharp \mathcal{P}_{prl}(m,y,z,\lambda)zt^{\overline{\mu}(\lambda)}.\\
\end{array}
\end{equation*}
Since $\tilde{f}'=\tilde{f}''$, we see that the figure-$Y$ pearly trajectory gives rise 
to a pearly trajectory from $y$ to $z$.
The identity 
$$0=\delta_{prod}(m,y,z,\lambda)=|m|+|y|-|z|+\mu(\lambda)-(n+1)$$ 
implies that 
$$0=|y|-|z|+\mu(\lambda),$$ 
since $|m|=n+1$.
Assume that $|y|\neq|z|$ and $\mu(\lambda)\neq 0$, then we have
\begin{equation*}
 \delta_{prl}(y,z,\lambda)=|y|-|z|+\mu(\lambda)-1<0,
\end{equation*}
which is a contradiction.
We conclude that $\mu(\lambda)=0$ and $|y|=|z|$, and thus $y=z$.
We may assume that $y=z$ is not a critical point of $\tilde{f}$.
Because $m$ is the only maximum of $\tilde{f}$, there exists a unique flow line of 
$-\nabla \tilde{f}$ starting in $m$ and going through $y=z$. 
Therefore
\begin{equation*}
\begin{array}{cccccc}
  m * y & = & \smashoperator{\sum\limits_{\begin{subarray}{c}
			z,\lambda\\
			\delta_{prod}=0\\
			 \end{subarray}}
} \sharp \mathcal{P}_{prod}(m,y,z,\lambda)zt^{\overline{\mu}(\lambda)}
	& = & \pm y.
\end{array}
\end{equation*}
Recall that the conventions for the moduli spaces of pearly trajectories are such that 
$\mathcal{P}_{prod}(m,y,y,0)$ is oriented as the unstable manifold of $m$. 
In particular the sign in front of $y$ in the above calculation is positive.
Hence 
\begin{equation*}
  m * y  = y.
\end{equation*}
This proves that the cycle $m$ represents the unit in homology.

It is left to show that the definition of the unit on the chain level is canonical.
We need to show that the corresponding comparison chain morphism preserves the homology 
class of $m$.
Suppose $\phi_{\mathcal{D}}^{\mathcal{D}'}$ is such a comparison morphism. 
Assume that the Morse function $\tilde{f}'$ belonging to $\mathcal{D}'$ is another Morse 
function with a single maximum $m'$.
The dimension of the comparison moduli space $\mathcal{P}_{comp}(m,y',\lambda)$, where 
$m$ is the single maximum of $\tilde{f}$ and $y'$ is any critical point of $\tilde{f}'$, 
is 
$$|m|-|y'|+\mu(\lambda)=(n+1)-|y'|+\mu(\lambda).$$
Thus, its dimension is only zero if $\mu(\lambda)=0$ and $|y'|=n+1$.
In other words, we have $\phi_{\mathcal{D}}^{\mathcal{D}'}(m)=m'$.
\end{proof}
Denote the unit of $QH_*(V,\partial V)$ by $e_{(V,\partial V)}:=[m]$.
\begin{remark}
Notice that it is crucial that $S$ is the whole boundary.
If $S\neq \partial V$ then some Morse trajectories ending at critical points of index $n$ 
(i.e. one less than the maximal index) do not come from $m$ but rather enter the manifold 
$V$ through $\partial V\setminus S$, where $-\nabla \tilde{f}$ 
points inside.
This means in general that $m$ is not a cycle in this complex.
It is then not true any more that there exists a unique flow line of $-\nabla \tilde{f}$ 
through the point $y$ coming from $m$.
Therefore the proof fails in the case $S\neq \partial V$.
\end{remark}

\subsection{The Graded Ring Structure}

Recall that by $(G,\cap)$ we denote the monoid $G=\{\text{subsets of } \pi_0( \partial 
V) \}$, where the operation $\cap$ is the intersection and $\partial V$ is the unit.
The aim of this section is to show that $\bigoplus_{S\in G} QH(V,S)$ admits the structure 
of a graded ring over $G$, namely that there exist a product on $\bigoplus_{S\in G} 
QH(V,S)$ with the property that $QH(V,S)*QH(V,S') \subset QH(V,S\cap S')$. 
For more information about rings graded over monoids see for example~\cite{monoid}.

\begin{definition}\label{def:P_prod_general}
Let $S$ and $S'\in G$ be two collections of connected components of $\partial V$.
 Define the moduli space $\mathcal{P}_{prod}^{S,S',S\cap 
S'}(x,y,z,\lambda;\tilde{f},\tilde{f}',\tilde{f}'',\tilde{\rho},\tilde{J})$ similar as in 
definition~\ref{def:P_prod} with the only difference that the functions 
$\tilde{f},\tilde{f}'$ and $\tilde{f}''$ are Morse functions respecting the exit regions 
$S$, $S'$ and $S\cap S'$ respectively.
Then we define for every $x\in Crit(\tilde{f})$, $y \in Crit(\tilde{f}')$,
\begin{equation}\label{eq:product_general}
 x * y := \sum\limits_{ \begin{subarray}{c}
                         z,  \lambda \\ \delta_{prod}=0\\
                         \end{subarray}
} \sharp \mathcal{P}^{S,S',S\cap S'}_{prod}(x, y, z, 
\mathbf{\lambda};\tilde{f},\tilde{f}',\tilde{f}'',\tilde{\rho}, \tilde{J}) z
t^{\overline{\mu}(\lambda)},
\end{equation}
where the sum runs over all $z\in Crit(\tilde{f}'')$ and $\mathbf{\lambda} \in 
H^D_2(\tilde{M},V)$, such that $\delta_{prod}(x,y,z,\mathbf{\lambda})=0$.
\end{definition}

With the above definition we get the following result.
The proof of the next proposition is postponed to the end of this section.

\begin{proposition}\label{prop:P_prod_general}
 There exists a bilinear map 
\begin{equation*}
\begin{array}{ccc}
 *_{S,S'} : QH_i(V,S)\otimes QH_j(V,S')& \rightarrow & QH_{i+j-(n+1)}(V, S\cap S'),
\end{array}
\end{equation*}
which extends the usual quantum product and turns $\bigoplus_{S\in G} QH(V,S)$ into a 
graded ring over $(G,\cap)$ with unit $e_{(V,\partial V)}$.
\end{proposition}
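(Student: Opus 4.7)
The plan is to adapt the proof of Proposition~\ref{prop:productandunit} and Lemma~\ref{lem:unit_rel} to the mixed--exit--region setting. The entire argument hinges on establishing an analogue of Lemma~\ref{lem:traj_awayfromboundary_general} for configurations whose three Morse functions respect three different exit regions $S$, $S'$ and $S\cap S'$. I would phrase this as a separate sublemma whose hypotheses exactly match Definition~\ref{def:P_prod_general}, and whose conclusion is that any element of $\mathcal{P}^{S,S',S\cap S'}_{prod}(x,y,z,\lambda;\tilde{f},\tilde{f}',\tilde{f}'',\tilde{\rho},\tilde{J})$ stays in the interior of $V^\epsilon$.

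The key observation, and the technical heart of the proof, is the following combinatorial impossibility. Suppose some $\tilde{J}$--holomorphic disk appearing in a figure--$Y$ configuration touches $\partial V$. By Lemma~\ref{lem:curve_openmapping}, its projection under $\pi$ is constant, so its boundary lies in a single fiber $\{a\}\times L$ for some connected component $L\subset\partial V$. Trace what this forces on the three attached pearly subtrajectories. For the trajectory $\mathbf{u}$ to end on $L$ in positive time while starting at an interior critical point of $\tilde{f}$, the flow $-\nabla\tilde{f}$ must point outward along $L$, hence $L\subset S$. For the analogous reason $\mathbf{u}'$ forces $L\subset S'$. On the other hand, $\mathbf{u}''$ begins on $L$ and must flow into the interior toward $z$ in positive time, so $-\nabla\tilde{f}''$ must point inward along $L$, which by hypothesis means $L\not\subset S\cap S'$. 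These three conditions are jointly contradictory, so no such disk exists, and by the same mechanism no purely Morse segment of the configuration can reach $\partial V$ either. This is the step I expect to be the main obstacle, but the argument above resolves it cleanly provided one carefully records which Morse function governs which edge of the figure--$Y$ tree.

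Once this no--escape statement is in hand, the standard machinery of~\cite{QuantumStructures}, together with Lemma~\ref{automatictransversality}, applies verbatim: for generic $(\tilde{f},\tilde{f}',\tilde{f}'',\tilde{\rho},\tilde{J})$ the moduli spaces $\mathcal{P}^{S,S',S\cap S'}_{prod}$ of virtual dimension $\le 1$ are smooth manifolds of the correct dimension, compact when $0$--dimensional, and admit the usual three--type boundary description when $1$--dimensional. This makes formula~\eqref{eq:product_general} a well defined $\Lambda$--bilinear chain map landing in $C((V^\epsilon,(S\cap S')^\epsilon);\mathcal{D}_{S\cap S'})$, and a standard homotopy argument via cobordism moduli spaces of figure--$Y$ configurations shows that the induced map on homology is independent of all choices. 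When $S=S'$ the definition literally reduces to Definition~\ref{def:P_prod}, so $*_{S,S}$ extends the quantum product of Proposition~\ref{prop:productandunit}.

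It remains to verify associativity across the monoid grading and that $e_{(V,\partial V)}$ acts as a two--sided unit. For associativity one considers the analogous figure--$H$ moduli spaces built from four Morse functions respecting $S$, $S'$, $S''$ and $S\cap S'\cap S''$ and, after applying the three--region no--escape argument to each of the two degenerations of figure--$H$, concludes by the usual count of boundary points that $(x*_{S,S'}y)*_{S\cap S',S''}z=x*_{S,S'\cap S''}(y*_{S',S''}z)$ in $QH_*(V,S\cap S'\cap S'')$. For the unit, recall that $e_{(V,\partial V)}=[m]$ is represented by the single maximum of a Morse function $\tilde{f}$ whose negative gradient points outward along all of $\partial V$ (Proposition~\ref{lem:singlemax_rel}); since $\partial V\cap S=S$ for every $S\in G$, choosing $\tilde{f}'=\tilde{f}''$ both respecting $S$ and repeating the dimension/uniqueness argument of Lemma~\ref{lem:unit_rel} shows that $m*_{\partial V,S}y=y$ on the chain level, so $e_{(V,\partial V)}$ is the two--sided unit of the monoidally graded ring $\bigoplus_{S\in G}QH_*(V,S)$.
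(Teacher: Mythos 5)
Your proof is correct but takes a genuinely different route from the one the paper uses. The technical step you isolate---an analogue of Lemma~\ref{lem:traj_awayfromboundary_general} when the three Morse functions respect \emph{different} exit regions $S$, $S'$ and $S\cap S'$---is indeed something the paper does not spell out (that lemma is stated for a single exit region). Your combinatorial argument at the core disk is the right observation: if the core disk $v$ touches a boundary component $L$, then $\tilde{x}$ and $\tilde{y}$ being endpoints of $\mathbf{u}$, $\mathbf{u}'$ (flowing \emph{into} $v$ in positive time) force $-\nabla\tilde{f}$ and $-\nabla\tilde{f}'$ to point outward along $L$, so $L\subset S$ and $L\subset S'$, while $\tilde{z}$ being the start of $\mathbf{u}''$ forces $-\nabla\tilde{f}''$ to point inward, so $L\not\subset S\cap S'$; these are jointly impossible. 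It is worth stressing, as you do, that this incompatibility really uses that the third exit region is $S\cap S'$ (or anything containing it); for an arbitrary triple of exit regions there is no contradiction. For disks other than $v$ you gesture at ``the same mechanism''; there the argument is in fact simpler and exit-region-independent, since the two boundary marked points of such a disk lie on the same component $L$ and transversality of the relevant $-\nabla\tilde{f}$ cannot simultaneously permit the forward flow from $u_i(1)$ and the backward flow from $u_i(-1)$ to stay inside $V^\epsilon$.

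The paper instead takes a more structural route. It fixes a single special Morse function $\tilde{f}$ as in Definition~\ref{def:specialMF0} (exit region $\partial V$) and produces the functions $\tilde{f}_{(V,S)}$ for every $S$ by restriction to $U(\partial V\setminus S)$. This exhibits $C((V,S);\tilde{f}_{(V,S)})$ as a subcomplex of $C((V,S');\tilde{f}_{(V,S')})$ whenever $S\subset S'$, and Remark~\ref{rmk:prodgeneral} then identifies $a*_{S,S'}b$ on the chain level with either the ordinary product inside $C(V,S\cap S';\tilde{f}_{(V,S\cap S')})$ (if both factors lie in that subcomplex) or with zero (by a dimension count otherwise). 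The identity $j^*_{S,T}(x)*_{T,T'}j^*_{S',T'}(y)=j^*_{S\cap S',T\cap T'}(x*_{S,S'}y)$ then yields at once that $*_{S,S'}$ extends the product of Proposition~\ref{prop:productandunit}, that associativity is inherited from Lemma~\ref{lem:assoc}, and that $e_{(V,\partial V)}$ is a two-sided unit. Your approach has to establish associativity and the unit independently; your unit argument with $\tilde{f}'=\tilde{f}''$ and exit region $\partial V\cap S=S$ is sound, but your associativity sketch via figure-$H$ moduli spaces is loose about which exit region governs the internal edge (it should be $S\cap S'$ for one parenthesization and $S'\cap S''$ for the other, so the two boundary degenerations of a single $1$-parameter family require some care). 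The paper's subcomplex argument sidesteps that issue, which is what it buys; your version buys a self-contained no-escape lemma that does not rely on the particular model Morse functions of Definition~\ref{def:specialMF0}.
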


As in the definition of the usual product we can prove that~(\ref{eq:product_general}) is 
well-defined and on the homology level it is invariant of the choice of the data. 
To prove the rest of the proposition we introduce a special Morse function, which allows 
us to relate the quantum homologies of $V$ with respect to different boundary parts.

\begin{definition}\label{def:specialMF0}
Let $\tilde{f}: V^{\epsilon} \rightarrow \mathbb{R}$ be a Morse function on  
$V^{\epsilon}$, such that $-\nabla \tilde{f}$ is transverse to the boundary $\partial 
V^{\epsilon}$ and points outside along $\partial V^{\epsilon}$.
Moreover we want $\tilde{f}$ to fulfill the following properties.
\begin{equation*}
        \begin{array}{lll}
        \tilde{f}(t,a_j^+,p)=f_j^+(p)+\sigma_j^+(t) & 
\sigma_j^+:[R_+,R_+ +\epsilon]\rightarrow \mathbb{R}, & p\in M, j=1,\dots, k_+, \\
	\tilde{f}(t,a_i^-,p)=f_i^-(p)+\sigma_i^-(t) & \sigma_i^-:[R_--\epsilon,R_- ]\rightarrow
\mathbb{R}, & p\in M, i=1,\dots, k_-, \\
       \end{array},
\end{equation*}
where $f_j^+:L_j^+\rightarrow \mathbb{R}$ and $ f_i^-:L_i^- \rightarrow \mathbb{R}$ are 
Morse functions on $L_j^+$ and $L_i^-$ respectively.
\begin{enumerate}
  \item [(i)] $\sigma_j^+(t)$ is a decreasing linear function for $t \in [R_+ + 
\frac{3\epsilon}{4},R_+ + \epsilon]$.
Furthermore $\sigma_j^+(t)$ has a critical point $t_{j,1}^+:=R_+ + \frac{\epsilon}{2}$ of 
index $1$, and a critical point $t_{j,0}^+:=R_+ + \frac{\epsilon}{4}$ of index $0$.
  \item [(ii)] $\sigma_i^-(t)$ is an increasing linear function for $t \in [R_- 
-\epsilon, R_- -\frac{3\epsilon}{4}]$.
Furthermore $\sigma_i^-(t)$ has a critical point $t_{i,1}^-:=R_- - \frac{\epsilon}{2}$ of 
index $1$ and a critical point $t_{i,0}^-:=R_- -\frac{\epsilon}{4}$ of index $0$.
\end{enumerate}
Let $S$ be a collection of connected components of $\partial V$ and let $N(S)$ be 
the neighborhood of $S^{\epsilon}$ given by
\begin{equation*}
 N(S):=\coprod_{L_i^- \in S} [R_- -\epsilon,  R_- -3\epsilon/8]\times \{a^-_i\} \times 
L_i^-\coprod_{L_j^+\in S} [R_++3\epsilon/8,  R_+ +\epsilon]\times \{a^+_j\} \times 
L_j^+.
\end{equation*}
Then the set $U(S)$ is defined by $U(S):=V^{\epsilon}\setminus N(S)$.
\end{definition}

The functions $\sigma_j^+(t)$ and $\sigma_i^-(t)$ are illustrated in figure~\ref{sigmas}.
\begin{figure}
 \centering
\includegraphics{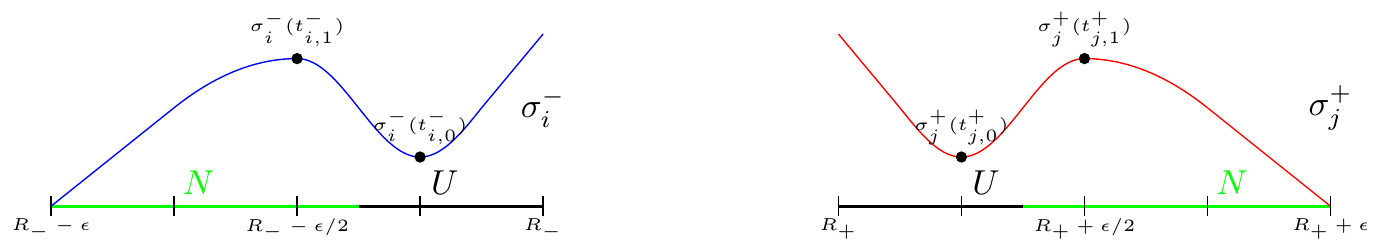}
\caption{The functions $\sigma_i^-$ and $\sigma_j^+$.}\label{sigmas}
 \end{figure}

Let $\tilde{f}$ be a Morse function as in Definition~\ref{def:specialMF0} respecting the 
exit region $\partial V$.
For a subset $S\subset \partial V$ denote by $f_{(V,S)}$ the Morse function given by 
restricting $\tilde{f}$ to the subset $U(\partial V \setminus S)$.
Notice that this is a Morse function respecting the exit region $S$.
Suppose that $S\subset S'$, then $C((V,S);\tilde{f}_{(V,S)},\rho,J)$ is naturally a 
subcomplex of $C((V,S');\tilde{f}_{(V,S')},\rho,J)$.
Let 
\begin{equation*}
 j^*_{S,S'}: QH(V,S)\rightarrow QH(V,S')
\end{equation*}
be the map, which is induced by the inclusion map $j_{S,S'}$ of the subcomplex 
$C((V,S);\tilde{f}_{(V,S)},\rho,J,\Lambda)$ into the chain complex 
$C((V,S');\tilde{f}_{(V,S')},\rho,J,\Lambda)$.
Fix three functions $\tilde{f},\tilde{f}'$ and $\tilde{f}''$ as in 
Definition~\ref{def:specialMF0}, which are in general position.
Since the definition of the product is independent of the choice 
of Morse functions, it suffices to prove the identities of the product using restrictions 
of the functions $\tilde{f},\tilde{f}'$ and $\tilde{f}''$ to appropriate subsets.
\begin{remark}\label{rmk:prodgeneral}
\hspace{2em}
\begin{enumerate}
 \item  Notice that Definition~\ref{def:P_prod_general} coincides on the chain level with 
the following definition.
If $a$ and $b$ are both elements of the subcomplex $C(V,S\cap 
S';\tilde{f}_{(V,S\cap 
S')},\rho,J)$, then $a*_{S,S'}b=a*b$, where $*$ denotes the usual product on $C(V,S\cap 
S';\tilde{f}_{(V,S\cap S')},\rho,J)$.
If either $a$ or $b$ is not contained in the subcomplex $C(V,S\cap 
S';\tilde{f}_{(V,S\cap 
S')},\rho,J)$, the following dimension calculation and the discussion above guarantee 
that in this case there exist no pearly trajectory from $a$ and $b$ to a critical point 
$c$, which lies in $C(V,S\cap S';\tilde{f}_{(V,S\cap S')},\rho,J)$.
Therefore, in this case we have $a*_{S,S'}b=0$.

\begin{proof}
We may assume without loss of generality that $a\in 
C(V,S;\tilde{f}_{(V,S)},\rho,J) \setminus C(V,S\cap S';\tilde{f}_{(V,S\cap 
S')},\rho,J)$.
Thus $a$ is of the form $(t_{j,1}^+,a_j^+,p)$ or $(t_{i,1}^-,a_i^-,p)$ for some 
critical point $p$ of $f$ and some $i$ or $j$, such that $L_j^+$ respectively $L_i^-$ 
belongs to $S\setminus (S\cap S')$.
Suppose there exist a trajectory from $a=(t_{i,1}^-,a_i^-,p)$ and $b$ to some element 
$c\in C(V,S\cap S';\tilde{f}_{(V,S\cap S')},\rho,J)$ such that the dimension of the 
moduli space containing it is zero. 
Then this implies the existence of a trajectory from $(t_{i,0}^-,a_i^-,p)$ and $b$ to 
$c$, which lies in a moduli space that has dimension one less (since 
$|(t_{i,0}^-,a_i^-,p)|=|(t_{i,1}^-,a_i^-,p)|-1$), i.e. it has dimension $-1$. 
This contradicts the assumption.
Hence we conclude $a*_{S,S'}b=0$.
\end{proof}

\item It is natural that for $a\in C(V,S;f_{(V,S)},\rho,J)$ and $b\in 
C(V,S';\tilde{f}_{(V,S')},\rho,J)$ the product $a*_{S,S'}b$ lies inside $C(V,S\cap 
S';\tilde{f}_{(V,S\cap S')},\rho,J)$.
Clearly, if $z\in C(V,\partial V ;f_{(V,\partial V)},\rho,J)\setminus C(V,S\cap 
S';\tilde{f}_{(V,S\cap S')},\rho,J)$, then there exist no pearly trajectory in 
$C(V,\partial V ;\tilde{f}_{(V,\partial V)},\rho,J)$ from $a$ and $b$ to $z$.
\end{enumerate}
\end{remark}

\begin{proof}[Proof of Proposition~\ref{prop:P_prod_general}]
By similar arguments as before, we know that~(\ref{eq:product_general}) is a 
chain map and induces a product on homology.
Using Morse functions $\tilde{f},\tilde{f}'$ and $\tilde{f}''$ as in 
Definition~\ref{eq:product_general} and suitable restrictions together with
Remark~\ref{rmk:prodgeneral} we can see that, if 
$S\subset T$ and $S'\subset T'$, then 
\begin{equation}\label{eq:prod_and_j}
 j^*_{S,T}(x)*_{T,T'}j^*_{S',T'}(y)= j^*_{S\cap S',T\cap T'}(x*_{S,S'}y).
\end{equation}
This identity implies that the product $*_{S,S'}$ extends the usual product given 
in Lemma~\ref{lem:prod} and that the unit of the ring $e_{(V,\partial V)} \in 
QH(V,\partial 
V)$ is a unit for the graded ring $\bigoplus_{S\in G} QH(V,S)$.
\end{proof}

\section{The Module Structure}\label{sec:ModuleStructure}

In this section we want to endow the quantum homology $QH(V,S)$ with the structure of a module over a version of the quantum homology related to the ambient manifold $\tilde{M}$.
In~\cite{QuantumStructures} the quantum homology of a closed Lagrangian has the structure of a two sided algebra over the quantum homology of the ambient manifold. 
In our case we are considering a Lagrangian cobordism.
There are two natural ways to define the ambient manifold.
Let $c$ be big enough such that $V:=\tilde{V}|_{[R_--\epsilon,R_+ +\epsilon]\times \mathbb{R}}=\tilde{V}|_{[R_--\epsilon,R_++\epsilon]\times[-c,c]}$ and we set 
$$T':=[R_--\epsilon,R_++\epsilon]\times (-c,c)\subset\mathbb{R}^2$$
 and 
$$R':=[R_--\epsilon,R_++\epsilon]\times [-c,c] \subset\mathbb{R}^2.$$
Then $T$ and $R$ denote the set obtained from $T'$ and $R'$ respectively by smoothening their boundaries and set $\tilde{M}_T:=T\times M$ and $\tilde{M}_R:=R\times M$.
The aim of this section is to prove the following result
\begin{proposition}
 There exists a bilinear map
\begin{equation*}
 \ast:QH_i(\tilde{M}_R,\partial \tilde{M_R}) \otimes QH_j(V,\partial V) 
\rightarrow 
QH_{i+j-(2n+2)}(V,\partial V),
\end{equation*}
which endows $QH_*(V,\partial V)$ with the structure of a two-sided algebra over the unital ring 
$QH_*(\tilde{M}_R,\partial \tilde{M_R})$.
\end{proposition}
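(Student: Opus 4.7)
The plan is to adapt the construction of the module action from~\cite{QuantumStructures} to the cobordism setting, using the same three confinement lemmas (Lemma~\ref{lem:curve_openmapping}, Lemma~\ref{automatictransversality} and Lemma~\ref{lem:traj_awayfromboundary_general}) that drive every other result in this paper. First I would fix auxiliary data: a Morse function $h$ on $\tilde{M}_R$ with $-\nabla h$ pointing inside along $\partial\tilde{M}_R$ so that its critical points generate the chain complex for $QH_*(\tilde{M}_R,\partial\tilde{M}_R)$, a Morse function $\tilde{f}$ on $V$ respecting the exit region $\partial V$ in the sense of Section~\ref{sec:LQHforLC}, a Riemannian metric on $\tilde{M}_R$ whose restriction to $V$ agrees with $\tilde{\rho}$, and an almost complex structure $\tilde{J}$ split as $i\oplus J$ outside a compact set $K\times M\subset\tilde{M}_R$. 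All four data pieces are taken in generic position.

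Next I would introduce the module moduli spaces $\mathcal{P}_{mod}(a,x,z,\lambda;h,\tilde{f},\tilde{\rho},\tilde{J})$ whose elements consist of a pearly trajectory $(u_1,\dots,u_l)$ connecting $x,z\in\mathrm{Crit}(\tilde{f})$ with total class $\lambda$, together with an interior marked point $q$ lying on one of the $u_i$ and subject to the condition that $q$ belongs to the descending manifold of $a\in\mathrm{Crit}(h)$ under the flow of $-\nabla h$ on $\tilde{M}_R$. The virtual dimension is $\delta_{mod}=|a|+|x|-|z|+\mu(\lambda)-(2n+2)$. Since $-\nabla h$ points inward along $\partial\tilde{M}_R$ and, by Lemma~\ref{lem:curve_openmapping}, every pearl stays in the interior of $V$ (its projection being either constant or contained in $K$), the configurations cannot reach the boundary of $V$ or of $\tilde{M}_R$; combined with Lemma~\ref{automatictransversality}, which handles the disks whose projection is constant near $\partial V$, this yields smoothness and compactness of $\mathcal{P}_{mod}$ in virtual dimensions $0$ and $1$ exactly as in the compact setting.

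I would then define the action on chains by
\begin{equation*}
 a\ast x := \sum_{\substack{z,\lambda \\ \delta_{mod}=0}} \pm\,\sharp\,\mathcal{P}_{mod}(a,x,z,\lambda)\, z\, t^{\overline{\mu}(\lambda)},
\end{equation*}
extended $\Lambda$-linearly. The chain-map property $d(a\ast x)= da\ast x \pm a\ast dx$ is read off from the standard three-type description of $\partial\overline{\mathcal{P}_{mod}}$ (Morse breaking, contraction, disk or sphere bubbling), with the breaking now split between the ambient and the Lagrangian side. Lemma~\ref{lem:traj_awayfromboundary_general} applied to the extended tree model (one edge labelled by $h$, the rest by $\tilde{f}$) rules out any further boundary stratum coming from escape to $\partial V$ or $\partial\tilde{M}_R$, while Lemma~\ref{automatictransversality} guarantees that bubbling off of a fibre-constant sphere contributes only zero-dimensional phenomena covered by the compact theory. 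Invariance of $\ast$ under the choice of $(h,\tilde{f},\tilde{\rho},\tilde{J})$ is then a comparison moduli space argument, again confined to the interior by the same three lemmas applied to the homotopy data.

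Finally, I would establish the algebra axioms. Associativity $(a\ast b)\ast x = a\ast(b\ast x)$, and the two-sided identity $a\ast(x\ast y)=(a\ast x)\ast y = x\ast(a\ast y)$, are each proved by constructing auxiliary one-parameter families of configurations that interpolate between the two sides and whose ends are the relevant compositions, following verbatim the model in~\cite{QuantumStructures}; unitality with respect to the unit of $QH_*(\tilde{M}_R,\partial\tilde{M}_R)$ follows by the same mechanism as Lemma~\ref{lem:unit_rel}. The main obstacle I anticipate is the ambient side: $\tilde{M}_R$ is a manifold with boundary rather than a closed symplectic manifold, so both the product on $QH_*(\tilde{M}_R,\partial\tilde{M}_R)$ entering the associativity statement and the holomorphic spheres appearing in the bubbling analysis have to be controlled at $\partial\tilde{M}_R$. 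The remedy is once more Lemma~\ref{lem:curve_openmapping}, which forces every $\tilde{J}$-holomorphic sphere into a single fibre $\{\mathrm{pt}\}\times M$, together with the inward-pointing gradient of $h$, which keeps the ambient Morse trajectories off $\partial\tilde{M}_R$; between them these confine the entire configuration to the interior and reduce the proof of the module and two-sided algebra axioms to the corresponding closed case.
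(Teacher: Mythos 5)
Your overall strategy is the one the paper follows: adapt the module action of~\cite{QuantumStructures} by marking one disk of a pearly trajectory, requiring its interior marked point to lie on an ambient descending manifold, and invoking the open-mapping confinement (Lemma~\ref{lem:curve_openmapping}, generalized in Lemma~\ref{lem:traj_awayfromboundary_moregeneral}) to reduce transversality, compactness, the chain-map identity and invariance to the closed case. However, there is a genuine error in the auxiliary data and in the confinement argument built on it: you take the Morse function $\tilde{h}$ on $\tilde{M}_R$ with $-\nabla\tilde{h}$ pointing \emph{inward} along $\partial\tilde{M}_R$ and claim this is what produces a complex computing $QH_*(\tilde{M}_R,\partial\tilde{M}_R)$ and what keeps the ambient Morse trajectories away from $\partial\tilde{M}_R$. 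Both claims are backwards. An inward-pointing negative gradient computes $H_*(\tilde{M}_R)$, not $H_*(\tilde{M}_R,\partial\tilde{M}_R)$; for the relative complex one must take $-\nabla\tilde{h}$ pointing \emph{outward} along $\partial\tilde{M}_R$, exactly as the paper does (Section~\ref{sec:ambient_T_R}: $\tilde{h}=(\tau_R+f)$ with $-\nabla\tau_R$ pointing outside along $\partial R$, with a single interior critical point of index $2$).

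Once the sign is corrected, the inward-pointing gradient of $\tilde{h}$ is not available as a confinement mechanism, and it is in fact not needed: the marked point $u_k(0)$ lies on a disk with boundary on $V$, which by Lemma~\ref{lem:curve_openmapping} projects into the compact set $K$ bounded away from $\partial\tilde{M}_R$, and the relevant piece of the ambient flow line (from $a$ down to $u_k(0)$) therefore stays interior because its endpoint does. So the confinement you attribute to the ``inward-pointing gradient of $h$'' is actually supplied by the disk confinement alone. With that correction the remainder of your proposal --- chain-map identity via the three-type boundary description, invariance via comparison moduli spaces confined by the same lemmas, associativity and the two-sided identities via one-parameter families, and unitality following the pattern of Lemma~\ref{lem:unit_rel} --- matches the paper's proof.
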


\subsection{The Ambient Quantum Homologies \texorpdfstring{$QH_*(\tilde{M}_T,\partial\tilde{M}_T)$ and
$QH_*(\tilde{M}_R,\partial\tilde{M}_R)$}{QH and
QH}}\label{sec:ambient_T_R}

Before we start with the construction of the module structure, we give the definition of the ambient quantum homology and explain their structures and relations.
Choose the symplectic form $\omega_{\tilde{M}_T}=(\omega_{\mathbb{R}^2}\oplus \omega_M)|_T$ and $\omega_{\tilde{M}_R}=(\omega_{\mathbb{R}^2}\oplus \omega_M)|_R$ respectively.
Let $\tilde{J}$ be an $\omega$-compatible almost complex structure on $\tilde{M}_T$ and $\tilde{M}_R$ respectively and assume that the projection $\pi$ is $\tilde{J}$-holomorphic outside of some set $K\times M$, where $K\subset (R_-,R_+)\times (-c,c)\subset R \subset\mathbb{R}^2$ is compact, and such that $V$ is cylindrical outside of $K\times M$.

We can define the relative quantum homology of the pairs $(\tilde{M}_T,\partial \tilde{M}_T)$ and $(\tilde{M}_R,\partial \tilde{M}_R)$.
Additively the quantum homologies $QH_*(\tilde{M}_T,\partial \tilde{M}_T)$ and $QH_*(\tilde{M}_R,\partial \tilde{M}_R)$ are the same as the singular homologies $H_*(\tilde{M}_T,\partial \tilde{M}_T)$ and $H_*(\tilde{M}_R,\partial \tilde{M}_R)$.
In order to define a product structure on the quantum homology we need specific Morse functions on $\tilde{M}_T$ and $\tilde{M}_R$.
Let $\tilde{g}:\tilde{M}\rightarrow \mathbb{R}$ be a Morse function on $\tilde{M}$ such that $-\nabla \tilde{g}$ points outside along the boundary $\partial \tilde{M}_T$ and inside at $\partial \overline{\tilde{M}_T}\setminus \partial \tilde{M}_T$.
Similarly let $\tilde{h}:\tilde{M}\rightarrow \mathbb{R}$ be a Morse function on $\tilde{M}$ such that $-\nabla \tilde{h}$ points outside at the boundary $\partial \tilde{M}_R$.
For instance, we can take $\tilde{g}$ to be of the form $(\tau_T+ f)|_{\tilde{M}_T}$, where $f$ is a Morse function on $M$ and $\tau_T$ is a Morse function on $\mathbb{R}^2$ with a single critical point of index $1$ inside $T$, and such that $-\nabla \tau_T$ points outside of $\partial T$ and inside of $\partial\overline{T}\setminus\partial T$.
In the same way we could choose $\tilde{h}$ to be of the form $(\tau_R + f)|_{\tilde{M}_T}$, where $f$ is a Morse function on $M$ and $\tau_R$ is a Morse function on $\mathbb{R}^2$ with a single critical point of index $2$ inside $R$ such that $-\nabla \tau_R$ points outside along $\partial R$.

The quantum product can be defined by counting the elements of a moduli spaces modeled over trees with two entries and one exit point and one vertex of valence three corresponding to a $\tilde{J}$-holomorphic sphere.
The entry points correspond to two critical points $x$ and $y$ of two Morse functions in general position and the exit point is a critical point $z$ of a third Morse function in general position.
In the case of $\tilde{M}_T$ we use Morse functions $\tilde{g}$, $\tilde{g}'$ and $\tilde{g}''$, as they were defined above.
In the case of $\tilde{M}_R$ we use Morse functions $\tilde{h}$, $\tilde{h}'$ and $\tilde{h}''$.
The edges of the tree correspond to the flow lines of the corresponding Morse functions.
Compare this with Definition~\ref{def:P_prod}.
We denote these moduli spaces by $\mathcal{M}_{prod}(x,y,z,\lambda;\tilde{g},\tilde{g}',\tilde{g}'',\tilde{J},\tilde{\rho })$ and $\mathcal{M}_{prod}(x,y,z,\lambda;\tilde{h},\tilde{h}',\tilde{h}'', \tilde{J},\tilde{\rho})$.
Let $x\in Crit(\tilde{g})$ and $y\in Crit(\tilde{g}')$ two critical points of two Morse functions on $\tilde{M}_T$.
Then
\begin{equation}\label{eq:qprod_T}
 x * y:=\sum\limits_{z,\lambda} 
\sharp \mathcal{M}_{prod}(x,y,z,\lambda;\tilde{g},\tilde{g}',\tilde{g}'' ) 
z t^{\overline{\mu}(\lambda)},
\end{equation}
where the sum runs over all $z\in Crit(\tilde{g})$ and $\lambda$, such that 
$\mathcal{M}_{prod}(x,y,z,\lambda)$ is $0$-dimensional.
Similarly for $\tilde{M}_R$ and $x,y,z \in Crit(\tilde{h})$ we put
\begin{equation}\label{eq:qprod_R}
 x* y:=\sum\limits_{z,\lambda} 
\sharp \mathcal{M}_{prod}(x,y,z,\lambda;\tilde{h},\tilde{h}',\tilde{h}'' ) 
z t^{\overline{\mu}(\lambda)},
\end{equation}
with the sum taken over the zero dimensional moduli spaces.

At this point it is useful to extend the result of 
Lemma~\ref{lem:traj_awayfromboundary_general} to more general moduli spaces.
We use the conventions from~\cite{QuantumStructures} to describe the moduli 
spaces modeled over trees.
These trees are now allowed to be more general. More precisely, in addition to the moduli 
spaces modeled over trees from Lemma~\ref{lem:traj_awayfromboundary_general} we allow the 
following things
\begin{enumerate}
 \item The edges of the tree can also correspond to flow line of the negative gradient of 
a Morse function $\tilde{g}$ or $\tilde{h}$ on one of the ambient manifolds 
$\tilde{M}_T$ or $\tilde{M}_R$.
 \item The edges may correspond to $\tilde{J}$-holomorphic spheres with tree incident 
points or to $\tilde{J}$-holomorphic disks with incident points at the boundary and one 
incident point at $0$.
 \item The starting and ending points of the tree can also correspond to critical points 
of a function $\tilde{g}$ or $\tilde{h}$ on one of the ambient manifolds $\tilde{M}_T$ or 
$\tilde{M}_R$.
\end{enumerate}
Then we get the more general lemma:

\begin{lemma}~\label{lem:traj_awayfromboundary_moregeneral}
Let $\mathcal{P}$ be a moduli space modeled over planar trees as described above.
Suppose that the almost complex structure $\tilde{J}$ is such that the projection $\pi$ 
is $\tilde{J}$-holomorphic outside of $K\times M$.
Then, the elements of the moduli space $\mathcal{P}$ cannot reach the boundaries of $\tilde{M}_R$, $\tilde{M}_T$ or $V$.
In particular, if the virtual dimension of $\mathcal{P}$ is at most $1$, it is a smooth manifold of dimension equal to its virtual dimension and it is compact if it is $0$-dimensional.
Moreover, the compactification of a one dimensional such moduli space has the same description as the compactification
of the analogous moduli space as in the setting of a compact connected Lagrangian. 
\end{lemma}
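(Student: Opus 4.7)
The plan is to reuse the same two ingredients as in the proofs of Lemmas~\ref{lem:traj_awayfromboundary} and~\ref{lem:traj_awayfromboundary_general}, namely the open-mapping dichotomy of Lemma~\ref{lem:curve_openmapping} and the automatic transversality of Lemma~\ref{automatictransversality}, but to combine them with the boundary transversality of each of the three chosen Morse functions $\tilde f$, $\tilde g$, $\tilde h$ at $\partial V$, $\partial\tilde M_T$, $\partial\tilde M_R$ respectively, together with the Morse–Smale genericity that is built into the construction.

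First I would show that no configuration in $\mathcal{P}$ touches any of the three boundaries. Every edge corresponding to a gradient flow line is ruled out immediately: by construction $-\nabla\tilde f$, $-\nabla\tilde g$ and $-\nabla\tilde h$ are transverse to their respective boundaries and the critical points all lie in the interior, so no maximal flow line can intersect a boundary. For a vertex corresponding to a $\tilde J$-holomorphic sphere or disk $u$, Lemma~\ref{lem:curve_openmapping} (together with its trivial adaptation to spheres, whose domain is already closed) gives the usual dichotomy: either $\pi\circ u(\Sigma)\subset K$, in which case $u$ lies in $K\times M$ and is in the interior of $\tilde M_R$, $\tilde M_T$ and $V$; or $\pi\circ u$ is constant at some value $p$. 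In the second case, were $u$ to meet the boundary of $\tilde M_R$, $\tilde M_T$ or $V^\epsilon$, the point $p$ would lie on the boundary of $R$, $T$ or on $\pi(\partial V^\epsilon)$, and then the incident marked points of $u$ (at which Morse trajectories are attached, whether interior points of a sphere, boundary points of a disk, or the centre of a disk) would themselves lie on the corresponding boundary. But this contradicts what was already established for the Morse flow lines that enter those incident points. Hence no component of the configuration ever meets a boundary.

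Next, smoothness and the expected-dimension property of $\mathcal{P}$ in virtual dimension at most $1$ follow verbatim from the construction in~\cite{QuantumStructures}: away from the boundary and inside $K\times M$ a generic choice of $\tilde J$ achieves transversality for non-constant holomorphic components, while the possibly non-generic constant-in-$\mathbb{R}^2$ components are handled by Lemma~\ref{automatictransversality}. Writing $\mathcal{P}$ as the fibre product of the resulting oriented moduli spaces of spheres, disks and (un)stable manifolds then yields a smooth oriented manifold of the expected dimension. Compactness when $\dim\mathcal{P}=0$ and the description of $\partial\overline{\mathcal{P}}$ when $\dim\mathcal{P}=1$ reduce, via step one, to Gromov compactness inside a fixed compact subset of the interior of the ambient manifolds; no boundary Morse breaking can occur because the gradients are transverse to the boundaries, so the usual trichotomy (Morse breaking at an interior critical point, a flow segment contracting to a point, or bubbling of a disk or sphere) exhausts $\partial\overline{\mathcal{P}}$ exactly as in the proof of Proposition~\ref{prop:LQHforLC}.

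The main obstacle, as I see it, is the bookkeeping in the first step for constant-projection holomorphic components: one must rule out that such a component lives in a fibre over a boundary point of $R$, $T$ or $\pi(V)$, and this relies on the interplay between the open-mapping principle and the transversality of the Morse gradients at each of the three boundaries. Once this incompatibility is recorded, the rest of the proof is a routine adaptation of the arguments already used for Lemma~\ref{lem:traj_awayfromboundary_general} and Proposition~\ref{prop:LQHforLC}, and I would not carry out those routine steps in detail.
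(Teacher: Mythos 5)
Your proposal is correct and follows essentially the same route as the paper: it extends Lemma~\ref{lem:curve_openmapping} to spheres, uses transversality of the three gradient fields at their respective boundaries to rule out boundary-touching flow segments, deduces that constant-projection holomorphic components cannot sit over a boundary point because their incident marked points would then be on the boundary, and then invokes Lemma~\ref{automatictransversality} and the standard compactness/gluing arguments away from the boundary. The paper's own proof is just a terse pointer to Lemma~\ref{lem:traj_awayfromboundary_general} plus the remark that Lemma~\ref{lem:curve_openmapping} applies to spheres as well, so your write-up is simply a more explicit version of the same argument.
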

\begin{proof}
Recall that Lemma~\ref{lem:curve_openmapping} also applies, if we have pseudo-holomorphic spheres instead of pseudo-holomorphic disks with boundary in a Lagrangian.
Therefore, the proof is similar to the proof of Lemma~\ref{lem:traj_awayfromboundary_general}.
\end{proof}

Lemma~\ref{lem:traj_awayfromboundary_moregeneral} ensures that the the quantum product on
$QH_*(\tilde{M}_T,\partial \tilde{M}_T)$ and $QH_*(\tilde{M}_R,\partial \tilde{M}_R)$ can 
be defined analogously as for the non-relative case.

\begin{lemma}
 For a generic choice of data, the operations~(\ref{eq:qprod_T}) and~(\ref{eq:qprod_R}) 
are well-defined and their linear extensions define chain maps. 
They induce products on the ambient quantum homology of $\tilde{M}_T$ and $\tilde{M}_R$ 
respectively, which are independent of the choices made in the constructions.
\end{lemma}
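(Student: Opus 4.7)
The plan is to reduce the statement to the corresponding result in the closed/compact setting of~\cite{QuantumStructures} by using Lemma~\ref{lem:traj_awayfromboundary_moregeneral} as a black box that rules out all boundary phenomena specific to $\tilde{M}_T$, $\tilde{M}_R$ and $V$. First I would check well-definedness of~(\ref{eq:qprod_T}) and~(\ref{eq:qprod_R}) on the chain level: for a generic choice of the data $(\tilde{g},\tilde{g}',\tilde{g}'',\tilde{J},\tilde{\rho})$ (respectively $(\tilde{h},\tilde{h}',\tilde{h}'',\tilde{J},\tilde{\rho})$), the virtual dimension formula shows that the moduli spaces $\mathcal{M}_{prod}(x,y,z,\lambda)$ of virtual dimension $0$ are smooth $0$-manifolds by Lemma~\ref{lem:traj_awayfromboundary_moregeneral}, which also gives compactness. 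Hence the sum over $(z,\lambda)$ is finite for each fixed $(x,y)$ once we also invoke monotonicity to control the possible Chern numbers of the sphere component (exactly as in the closed case).

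Next I would verify the Leibniz identity $d(x*y)=(dx)*y+(-1)^{|x|}x*(dy)$ by studying the boundary of the compactification of a generic $1$-dimensional moduli space $\mathcal{M}_{prod}(x,y,z,\lambda)$. By Lemma~\ref{lem:traj_awayfromboundary_moregeneral}, the compactification $\overline{\mathcal{M}_{prod}}$ is a compact $1$-manifold with boundary, and no trajectory can escape to $\partial\tilde{M}_T$ or $\partial\tilde{M}_R$ thanks to the transversality conditions $-\nabla\tilde{g}$ (resp.\ $-\nabla\tilde{h}$) along these boundaries and to the fact that $\tilde{J}$-holomorphic spheres whose image meets a fibre outside $K\times M$ project to a constant. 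Consequently the boundary decomposes exactly as in the closed case: breakings of a Morse flow line at an interior critical point (contributing the Leibniz terms) together with disk/sphere bubblings and contractions of flow segments that cancel pairwise by the usual dimension/orientation arguments from~\cite{QuantumStructures}. Summing signs over $\partial\overline{\mathcal{M}_{prod}}=0$ gives the chain-map identity.

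For invariance under the choices, I would introduce, given two generic data sets $\mathcal{D}$ and $\mathcal{D}'$, comparison moduli spaces modelled on trees where the edges now carry a Morse cobordism interpolating $(\tilde{g},\tilde{\rho})$ with $(\tilde{g}',\tilde{\rho}')$ (resp.\ for $\tilde{h}$) and the interior vertex uses a homotopy $\tilde{J}_s$ of almost complex structures that remains of the form $i\oplus J_s$ outside $K\times M$. Counting elements in $0$-dimensional comparison moduli spaces defines chain maps $\phi_{\mathcal{D}}^{\mathcal{D}'}$; analysing the boundary of the $1$-dimensional comparison moduli spaces yields chain homotopies that intertwine the two products, and composition/identity properties of the $\phi_{\mathcal{D}}^{\mathcal{D}'}$ follow from the standard concatenation argument, again using Lemma~\ref{lem:traj_awayfromboundary_moregeneral} to exclude escape to the boundary.

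The main obstacle is not the algebraic bookkeeping but ensuring that the cylindrical/boundary geometry of $\tilde{M}_T$ and $\tilde{M}_R$ does not introduce new boundary components of the $1$-dimensional moduli spaces: one must rule out both flow-line breakings that cross $\partial\tilde{M}_T$ or $\partial\tilde{M}_R$ and sphere bubbles whose image wanders out of $K\times M$. Both are handled by the transversality of $-\nabla\tilde{g}$, $-\nabla\tilde{h}$ with the boundary (together with the corresponding property for the cobordism $\tilde{f}$), and by the open mapping theorem argument of Lemma~\ref{lem:curve_openmapping} extended to spheres; once these are in hand, the remainder of the proof is a direct transcription of the arguments in~\cite{QuantumStructures}.
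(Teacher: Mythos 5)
Your proposal is correct and follows essentially the same route as the paper: the paper's proof is a one-line reduction, invoking Lemma~\ref{lem:traj_awayfromboundary_moregeneral} to conclude that the argument is identical to the closed-manifold case (citing~\cite{McDuffSalamon}), and you simply spell out the standard steps of that reduction (transversality/compactness of zero-dimensional moduli spaces, boundary analysis of one-dimensional moduli spaces for the chain-map property, comparison moduli spaces for invariance). The only cosmetic difference is that you cite~\cite{QuantumStructures} where the paper points to~\cite{McDuffSalamon} for the closed ambient quantum product, but the content of the appeal is the same.
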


\begin{proof}
 By Lemma~\ref{lem:traj_awayfromboundary_moregeneral} the proof is equivalent to the proof of the existence of the quantum product for a closed manifold. 
This can 
be found in~\cite{McDuffSalamon} for example.
\end{proof}

As we will see below that the product on $QH_*(\tilde{M}_T,\partial\tilde{M}_T)$ actually turns out to be zero. 
The product on $QH_*(\tilde{M}_R,\partial \tilde{M}_R)$ can be described using the homology $QH_{*-2}(M)$.
These results are summarized in the next proposition.
\begin{lemma}\label{prop:Kunneth}
 The quantum homology $QH_*(\tilde{M}_R,\partial \tilde{M}_R)$ is isomorphic as a ring to 
the quantum homology of $M$ by a shift in degree.
More precisely
\begin{equation*}
 \begin{array}{cccc}
QH_*(\tilde{M}_R,\partial \tilde{M}_R)&\cong &QH_{*-2}(M).\\
 \end{array}
\end{equation*}
The quantum homology $QH_*(\tilde{M}_R,\partial \tilde{M}_R)$ is a unital ring.
The quantum product on $QH_*(\tilde{M}_T,\partial\tilde{M}_T)$ is trivial.
Additively it is isomorphic to the quantum homology $QH_{*-1}(M)$.
\end{lemma}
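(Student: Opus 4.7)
The strategy is to exploit the product structure $\tilde{M}_R = R\times M$ and $\tilde{M}_T = T\times M$ together with a split almost complex structure $\tilde{J} = i\oplus J$ and split Morse data $\tilde{h} = \tau_R + f$ on $\tilde{M}_R$ and $\tilde{g} = \tau_T + f$ on $\tilde{M}_T$. Here $\tau_R$ is a Morse function on $R$ with a single interior critical point $c_{\tau_R}$ of index $2$ and with $-\nabla \tau_R$ pointing outward along $\partial R$; $\tau_T$ is analogously a Morse function with a single index-$1$ critical point $c_{\tau_T}$ and the appropriate boundary behaviour; and $f$ is a Morse function on $M$. Pick also product Riemannian metrics. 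Such data exist by standard Morse theory on the disk and on the strip.

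The key input is Lemma \ref{lem:curve_openmapping} applied to spheres: since $\tilde{J} = i\oplus J$ makes $\pi$ holomorphic on all of $\tilde{M}$, every $\tilde{J}$-holomorphic sphere $u\colon S^2 \to \tilde{M}$ has $\pi\circ u$ holomorphic from $S^2$ to $\mathbb{C}$, hence constant. Consequently every $\tilde{J}$-holomorphic sphere in $\tilde{M}_R$ (or $\tilde{M}_T$) lies in a single fibre $\{p\}\times M$ and is of the form $\{p\}\times u'$ with $u'$ a $J$-holomorphic sphere in $M$. Combined with the fact that all the relevant gradient-flow data splits as a product, every pearl, figure-$Y$, and comparison moduli space on $\tilde{M}_R$ factors as a fibre product of a purely Morse-theoretic moduli space on the $R$-factor and the corresponding pearl/quantum moduli space on $M$. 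Since $\tau_R$ has a unique critical point, the $R$-factor is trivial and contributes the identity, yielding an isomorphism of chain complexes
\begin{equation*}
 C_*(\tilde{M}_R,\partial \tilde{M}_R;\tilde{h},\tilde{\rho},\tilde{J}) \;\cong\; \mathcal{R}\langle c_{\tau_R}\rangle\otimes C_*(M;f,\rho_M,J) \;\cong\; C_{*-2}(M;f,\rho_M,J),
\end{equation*}
compatible with both the differentials and the figure-$Y$ products. Passing to homology gives the ring isomorphism $QH_*(\tilde{M}_R,\partial \tilde{M}_R)\cong QH_{*-2}(M)$, and the unit $[c_{\tau_R}]\otimes [M]$ exhibits unitality.

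The same product decomposition for $\tilde{M}_T$ immediately yields the additive isomorphism $QH_*(\tilde{M}_T,\partial \tilde{M}_T) \cong QH_{*-1}(M)$ on the chain level and hence on homology. For the product, note that under the splitting the quantum product reduces to the Morse intersection product on $(T,\partial T)$ tensored with the quantum product on $M$. Now $\tau_T$ has no critical point of index $0$, while the Morse product $c_{\tau_T} * c_{\tau_T}$ lives in degree $1+1-2 = 0$. Therefore this Morse product must vanish, forcing the entire quantum product on $QH_*(\tilde{M}_T,\partial \tilde{M}_T)$ to be trivial.

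The main technical obstacle is transversality: the split choice $\tilde{J} = i\oplus J$ is very non-generic in $\tilde{\mathcal{J}}$. This is resolved exactly as in the body of the paper by Lemma \ref{automatictransversality} and Lemma \ref{lem:traj_awayfromboundary_moregeneral}: curves with constant $\mathbb{R}^2$-projection have surjective linearised $\overline{\partial}$-operator, so generic choice of $J$ on $M$ and of the Morse data already suffices to make the relevant moduli spaces smooth of the expected dimension and to allow the usual compactification arguments. A secondary point is to check that a Morse function with a single critical point of the prescribed index and outward-pointing gradient along the (smoothed) boundary of $R$, respectively $T$, actually exists; this is routine, following the elimination argument of Lemma \ref{lem:singlemin}.
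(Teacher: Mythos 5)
Your proof is correct and uses the same fundamental mechanism as the paper: globally split almost complex structure $\tilde J = i\oplus J$ (so that $\pi$ is holomorphic everywhere and spheres are confined to fibres by the maximum principle) together with split Morse data $\tau + f$, producing a fibre-product decomposition of the figure-$Y$ moduli spaces. The packaging differs in two places. First, where the paper argues element by element --- observing that the core sphere must sit over the unique critical point $\xi$ of $\tau$, that $-\nabla\tau'$ connects $\xi'$ to $\xi$ uniquely, and hence that $\mathcal{M}_{prod}(x,y,z,\lambda)\cong\mathcal{M}(a,b,c,\lambda)$ --- you phrase the same computation as a chain-level tensor factorisation $C_*(\tilde M_R,\partial\tilde M_R)\cong\mathcal{R}\langle c_{\tau_R}\rangle\otimes C_*(M)$, which is cleaner and makes the compatibility with differentials and products more transparent, provided you note (as the paper's element-level argument handles implicitly) that the fibre of the core is forced to lie over $\xi$ because $W^s(\xi)=\{\xi\}$ when $\xi$ is the unique maximum. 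Second, for the triviality of the product on $QH_*(\tilde M_T,\partial\tilde M_T)$ the paper makes a genericity choice (arranging $\xi\notin W^u(\xi')$ so that the $T$-part of the figure-$Y$ moduli space is empty), whereas you argue directly by degree: the $T$-factor of any output lands in Morse index $1+1-2=0$, where $\tau_T$ has no critical points. The two arguments are equivalent --- both record that the triple intersection of (un)stable manifolds in the two-dimensional $T$ has negative expected dimension --- but your degree count is self-contained and avoids singling out a generic choice. Your handling of transversality via Lemma~\ref{automatictransversality} is exactly what the paper does.
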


\begin{remark}\label{rmk:Kunneth}
 As we have already mentioned, the quantum homologies $QH_*(\tilde{M}_T,\partial 
\tilde{M}_T)$ and $QH_*(\tilde{M}_R,\partial\tilde{M}_R)$ are additively the same as the 
singular homologies tensored with $\Lambda$, i.e.  
$H_*(\tilde{M}_T,\partial \tilde{M}_T)\otimes \Lambda$ and 
$H_*(\tilde{M}_R,\partial\tilde{M}_R)\otimes \Lambda$ respectively.
Note that $H_i(\tilde{M}_T,\partial\tilde{M}_T)\cong H_{i-1}(M)\otimes_{\mathcal{R}} 
H_1(T,\partial T)\cong H_{i-1}(M)\otimes_{\mathcal{R}} {\mathcal{R}} \cong H_{i-1}(M)$ 
and $H_i(\tilde{M}_R,\partial\tilde{M}_R)\cong H_{i-2}(M)\otimes_{\mathcal{R}} H_2(R,\partial R) 
\cong H_{i-2}(M)\otimes_{\mathcal{R}} {\mathcal{R}}
\cong H_{i-2}(M)$ by the Künneth isomorphism.
Therefore, the isomorphism in Lemma~\ref{prop:Kunneth} can be described as follows.
\begin{equation*}
 \begin{array}{ccc}
\Phi_R:QH_{*-2}(M) & \xrightarrow{\cong} & QH_*(\tilde{M}_R,\partial \tilde{M}_R)\\
b & \mapsto & R\times b\\
\end{array},
\end{equation*}
where $b$ is a homology class in $QH_{*-2}(M)$ and $R$ represents the generator of 
$H_2(R,\partial R)$. 
In particular for every $R \times b$ and $R\times b'$ elements of 
$QH_*(\tilde{M}_R,\partial \tilde{M}_R)$ we have that
\begin{equation}\label{Phi_R}
 R\times(b*b')=\Phi_R(b * b')= \Phi_R(b) * \Phi_R(b') = (R \times b)* (R\times b').
\end{equation}
For $\tilde{M}_T$ we have
\begin{equation*}
\begin{array}{ccc}
\Phi_T:H_{*-1}(M) & \xrightarrow{\cong} & H_*(\tilde{M}_T,\partial \tilde{M}_T)\\
a & \mapsto
 & I\times a\\
\end{array},
\end{equation*}
where $a$ denotes some homology class in $QH_{*-1}(M)$ and $I$ denotes the generator of 
$H_*(T,\partial T)$, which is represented by the 
interval $[R_--\epsilon,R_++\epsilon]\times \{ 0 \} \subset T$.
\end{remark}
\begin{proof}[Proof of Lemma~\ref{prop:Kunneth}]
 By Remark~\ref{rmk:Kunneth} it is left to show that the first isomorphism $\Phi_R$ respects the quantum product.
Choose the almost complex structure on $\tilde{M}_R$ to be split, i.e. $\tilde{J}=i\oplus J$ for some almost complex structure $J$ on $M$.
In particular this means that the projection $\pi$ is everywhere $(\tilde{J},i)$-holomorphic.
Consider an element in the moduli space $\mathcal{M}_{prod}(x,y,z,\lambda;\tilde{h},\tilde{h}',\tilde{h}'' )$.
The projection of the $\tilde{J}$-holomorphic sphere, corresponding to the interior vertex of the tree, is constant. 
Hence this sphere is completely contained in one of the fibers of $\pi:T\times M \rightarrow T$, say $\pi^{-1}(t)$.
We may assume that $\tilde{h}=\tilde{h}''$ and that $\tilde{h}=f+ \tau$ and $\tilde{h}'=f'+ \tau'$. Here $f$ and $f'$ are Morse functions on $M$ and $\tau$ and $\tau'$ Morse functions on $T$, each with a unique maximum $\xi \in Crit(\tau)$ and $\xi'\in Crit(\tau')$ respectively.
In particular $\pi(x)=\pi(z)=\xi$.
Conclude that the part of the tree, corresponding to the negative gradient flow of $\tilde{h}$, also has to be completely contained in $\pi^{-1}(t)$.
In particular we see that $t=\xi$.
We may assume that $\xi$ is not a critical point of $\tau'$, then the gradient flow line of $-\nabla \tilde{h}'$ projects under $\pi$ to the unique negative gradient flow of $\tau'$ connecting $\xi'$ to $\xi$.

Let $x=(\xi, a)$, $y=(\xi', b)$ and $z=(\xi, c)$. 
The flow line of $-\nabla \tilde{h}'$ goes from $y$ to $u(e^{4\pi i/3})=:p$.
Suppose $p=(\xi, q)$.
By the choice of $h'$, the flow lines of $h'$ between $y$ and $p$ are in one to one correspondence with flow lines of $f'$ from $b$ to $q$.
Hence, we have a bijection between $\mathcal{M}_{prod}(x,y,z,\lambda)$ and $\mathcal{M}(a,b,c,\lambda)$.
This proves that the identity~(\ref{Phi_R}) holds.

For the proof of the second part of the proposition, consider two functions $\tilde{g}$ and $\tilde{g}'$ in general position.
Since the single critical point $\xi'$ of $\tau'$ has index one, we may choose the critical point $\xi$ of $\tau$ such that it is not contained in the unstable manifold of $\xi'$.
By this choice of $\tau$ and $\tau'$, the moduli space $\mathcal{M}_{prod}(x,y,z,\lambda;\tilde{g},\tilde{g'},\tilde{g})$ is empty. 
\end{proof}

It is easy to see that the inclusion $H_*(\tilde{M}_T,\partial\tilde{M}_T)\hookrightarrow H_*(\tilde{M}_R,\partial \tilde{M}_R)$ in singular homology is trivial.
The relation between the quantum homologies $QH_*(\tilde{M}_T,\partial \tilde{M}_T)$ and $QH_*(\tilde{M}_R,\partial \tilde{M}_R)$ is given by the following two corollaries.
\begin{corollary}
The inclusion map
\begin{equation*}
\begin{array}{ccc}
 QH_*(\tilde{M}_T,\partial \tilde{M}_T) & \rightarrow & QH_*(\tilde{M}_R,\partial \tilde{M}_R)\\
\end{array}
\end{equation*}
is trivial.
\end{corollary}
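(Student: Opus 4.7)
The plan is to reduce this statement to the corresponding triviality in singular homology, which is already observed above. First, I would choose a split almost complex structure $\tilde{J}=i\oplus J$ on $\tilde{M}_R$ together with Morse functions of the product form $\tilde{g}=\tau_T+f$ on $\tilde{M}_T$ and $\tilde{h}=\tau_R+f$ on $\tilde{M}_R$ as in Section~\ref{sec:ambient_T_R}, using the same auxiliary Morse function $f$ on $M$. By Lemma~\ref{lem:traj_awayfromboundary_moregeneral}, every $\tilde{J}$-holomorphic sphere appearing in any moduli space we consider projects to a constant in the base, and no pearly configuration can reach $\partial\tilde{M}_T$ or $\partial\tilde{M}_R$.

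Next, I would construct the chain-level inclusion map
\begin{equation*}
 i_{T,R}:C(\tilde{M}_T,\partial\tilde{M}_T;\tilde{g},\tilde{\rho},\tilde{J})\rightarrow C(\tilde{M}_R,\partial\tilde{M}_R;\tilde{h},\tilde{\rho},\tilde{J})
\end{equation*}
via a comparison moduli space built from a Morse cobordism from $(\tilde{g},\tilde{\rho})$ to $(\tilde{h},\tilde{\rho})$, analogous to the maps $\phi_{\mathcal{D}}^{\mathcal{D}'}$ recalled in Section~\ref{sec:pearlcomplex}. The key observation is that the Morse cobordism can itself be chosen in product form $\Sigma+f$, where $\Sigma$ is a Morse cobordism on the base interpolating between $\tau_T$ and $\tau_R$. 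Combined with the split structure of $\tilde{J}$, Lemma~\ref{lem:traj_awayfromboundary_moregeneral} forces every pseudo-holomorphic sphere in a zero-dimensional comparison moduli space to lie in a fiber of $\pi$, so $i_{T,R}$ factors as the tensor product of the base-level Morse comparison map $CM_*(\tau_T)\rightarrow CM_*(\tau_R)$ with the identity on the Morse complex of $(f,J)$ on $M$.

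Under the Künneth identifications of Remark~\ref{rmk:Kunneth}, the induced map on quantum homology then becomes the tensor product of the singular inclusion $H_*(T,\partial T)\rightarrow H_*(R,\partial R)$ with the identity on $QH_*(M)\otimes\Lambda$. But $H_k(T,\partial T)$ is concentrated in degree $k=1$, and $H_1(R,\partial R)=0$, so the base-level map vanishes identically. Hence $i_{T,R}$ induces the zero map on quantum homology, as claimed.

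The main obstacle will be making rigorous the tensor-product factorization of $i_{T,R}$. This requires a careful choice of Morse cobordism data adapted to the product structure both of the Morse functions and of the almost complex structure, and a verification, via Lemma~\ref{lem:traj_awayfromboundary_moregeneral} together with fiberwise moduli space identifications in the spirit of the proof of Lemma~\ref{prop:Kunneth}, that the zero-dimensional comparison moduli spaces decompose as a product of base and fiber moduli. Once this factorization is in place, the vanishing follows immediately from the triviality of the inclusion on the base pair $(T,\partial T)\hookrightarrow(R,\partial R)$.
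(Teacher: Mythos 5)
Your proposal is correct and essentially mirrors the paper's own argument: the paper's proof also reduces the claim to the Künneth identifications of Remark~\ref{rmk:Kunneth} and the vanishing of the image of the base class $I$ in $H_*(R,\partial R)$. You fill in more detail by constructing the chain-level map via a comparison moduli space in product form and invoking Lemma~\ref{lem:traj_awayfromboundary_moregeneral} to obtain the fiberwise factorization, whereas the paper simply asserts the conclusion directly from Remark~\ref{rmk:Kunneth}; the underlying mechanism (and the degree count $H_1(T,\partial T)\cong\mathcal{R}$ versus $H_1(R,\partial R)=0$) is the same.
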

\begin{proof}
By Remark~\ref{rmk:Kunneth} we see that the inclusion is given by
\begin{equation*}
\begin{array}{ccc}
 I\times a & \mapsto & R \times a.\\
\end{array}
\end{equation*}
Clearly $I$ is zero inside $H_*(R,\partial R)$ which proves the corollary.
\end{proof}
\begin{lemma}
There exists a bilinear map
\begin{equation}\label{eq:incl_ambient}
 *:QH_*(\tilde{M}_T,\partial \tilde{M}_T)\otimes QH_*(\tilde{M}_R,\partial \tilde{M}_R) 
\rightarrow QH_*(\tilde{M}_T,\partial \tilde{M}_T),
\end{equation}
which endows $QH_*(\tilde{M}_T,\partial \tilde{M}_T)$ with the structure of a module over 
the ring $QH_*(\tilde{M}_R,\partial \tilde{M}_R)$.
Moreover this product can be described as follows.
For every $(I\times a)\in QH_*(\tilde{M}_T,\partial \tilde{M}_T)$ and $(R\times b)\in 
QH_*(\tilde{M}_R,\partial \tilde{M}_R)$ we have
\begin{equation}
 (I \times a) * (R\times b)=(I\times (a* b)).
\end{equation}
\end{lemma}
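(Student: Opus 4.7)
The plan is to imitate the construction of the quantum product in Section~\ref{sec:ambient_T_R} but now for mixed data: two entry points, one coming from a Morse function on $\tilde{M}_T$ respecting the exit region $\partial \tilde{M}_T$, the other from a Morse function on $\tilde{M}_R$ respecting $\partial \tilde{M}_R$, and an exit point on a third Morse function on $\tilde{M}_T$ respecting $\partial \tilde{M}_T$. Concretely, fix Morse functions $\tilde{g}$, $\tilde{g}''$ of the type used for $QH_*(\tilde{M}_T,\partial \tilde{M}_T)$ and $\tilde{h}'$ of the type used for $QH_*(\tilde{M}_R,\partial \tilde{M}_R)$, in general position, together with a generic $\omega$-compatible $\tilde{J}$ that is split outside $K\times M$. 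For critical points $x\in Crit(\tilde{g})$, $y\in Crit(\tilde{h}')$, $z\in Crit(\tilde{g}'')$ and a class $\lambda\in H_2^S(\tilde{M})$, consider the moduli space $\mathcal{M}_{mod}(x,y,z,\lambda;\tilde{g},\tilde{h}',\tilde{g}'',\tilde{J},\tilde{\rho})$ of figure-$Y$ configurations with a central $\tilde{J}$-holomorphic sphere of class $\lambda$, three flow-line segments attached at three marked points, and the ends asymptotic to $x$, $y$, $z$ respectively; its virtual dimension is $|x|+|y|-|z|+\mu(\lambda)-(2n+2)$.

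Next, I would invoke Lemma~\ref{lem:traj_awayfromboundary_moregeneral} to guarantee that no configuration reaches $\partial \tilde{M}_T$ or $\partial \tilde{M}_R$, that for generic data the moduli spaces are smooth manifolds of dimension equal to their virtual dimension when $\leq 1$, and that they are compact when $0$-dimensional. Defining
\begin{equation*}
 x * y := \sum_{z,\lambda,\ \delta=0} \sharp\, \mathcal{M}_{mod}(x,y,z,\lambda)\, z\, t^{\overline{\mu}(\lambda)}
\end{equation*}
and extending $\Lambda$-linearly gives a chain-level operation. The usual argument, based on the description of the boundary of $1$-dimensional moduli spaces (breaking of Morse flow lines at critical points and bubbling of $\tilde{J}$-holomorphic spheres, all of which are controlled exactly as in~\cite{QuantumStructures}) shows that this operation is a chain map; a standard cobordism-of-moduli-spaces argument between different generic choices of $(\tilde{g},\tilde{h}',\tilde{g}'',\tilde{J},\tilde{\rho})$ gives invariance on homology. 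The module axioms $(h_1 * h_2)*x = h_1*(h_2*x)$ and $e*x=x$ reduce to the associativity and unitality of figure-$Y$-type moduli spaces, which again follow from the standard $1$-dimensional moduli space argument applied to the composed configurations.

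To identify the product with the formula $(I\times a)*(R\times b)=I\times(a*b)$, I would mimic the argument already given in the proof of Lemma~\ref{prop:Kunneth}. Choose $\tilde{J}=i\oplus J$ and split Morse functions $\tilde{g}=\tau_T+f$, $\tilde{g}''=\tau_T''+f''$ on $\tilde{M}_T$ and $\tilde{h}'=\tau_R'+f'$ on $\tilde{M}_R$, where $\tau_T,\tau_T''$ each have a single critical point $\xi,\xi''$ of index $1$ in $T$ and $\tau_R'$ has a single critical point $\xi'$ of index $2$ in $R$, arranged so that the Morse flow of $\tau_R'$ from $\xi'$ to $\xi$ and from $\xi''$ to $\xi$ traces out (up to orientation) the classes represented by $R$ and $I$. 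By the open mapping theorem (Lemma~\ref{lem:curve_openmapping}), every $\tilde{J}$-holomorphic sphere projects to a point in $T$; hence the entire figure-$Y$ configuration projects to a tree in $T$. Arguing as in Lemma~\ref{prop:Kunneth}, the $T$-component of any rigid configuration is forced to be the unique Morse figure-$Y$ from $\xi,\xi'$ to $\xi''$ in $(T,\tau_T,\tau_R',\tau_T'')$, which exists with multiplicity one and represents the class $I\times(\cdot)$. The $M$-factor of the configuration is then exactly an element of the moduli space computing $a*b$ in $QH(M)$, yielding the stated formula.

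The main technical obstacle will be checking compatibility of orientations: the moduli spaces $\mathcal{M}_{mod}$ must be oriented consistently so that the chain-level operation agrees with the tensor product of orientations under the Künneth identifications $\Phi_T$ and $\Phi_R$. This amounts to a careful tracking of signs in the fiber-product description of figure-$Y$ moduli spaces and in the splitting $\tilde{M}=\mathbb{R}^2\times M$, paralleling the sign analysis to be carried out in Section~9 of the paper; once this is settled, the module axioms and the explicit formula follow cleanly from the computation above.
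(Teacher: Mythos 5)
Your proposal follows the same strategy as the paper: define mixed figure-$Y$ moduli spaces with one edge labeled by a Morse function on $\tilde{M}_R$ and the other two by Morse functions on $\tilde{M}_T$, invoke Lemma~\ref{lem:traj_awayfromboundary_moregeneral} to keep configurations away from the boundaries, and then pass to split $\tilde{J}=i\oplus J$ and split Morse functions so that (by the open mapping argument) the core sphere and the $\tilde{M}_T$-edges lie in a single fiber of $\pi$, which together with the unique $\tau_R$-flow line from $\xi'$ to that fiber reduces the count to $\mathcal{M}_{prod}(a,b,c,\lambda;f,f',f)$ in $M$. The paper streamlines the fiber analysis by taking $\tilde{g}=\tilde{g}''$ (so both $T$-endpoints share the critical point $\xi$), and one small imprecision in your write-up should be adjusted: it is not a flow line of $\tau_R'$ from $\xi'$ to $\xi$ that ``traces out'' $R$, but rather the $2$-dimensional unstable manifold of the index-$2$ critical point $\xi'$ that represents the generator of $H_2(R,\partial R)$.
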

\begin{proof}
Consider the moduli spaces $\mathcal{M}_{prod}(x,y,z,\lambda;\tilde{g}, \tilde{h},\tilde{g'})$, which are defined in a similar way as the moduli spaces 
$\mathcal{M}_{prod}$ in the beginning of the section  with the only difference that the edges corresponding to the flow lines of $-\nabla \tilde{g}$, $-\nabla \tilde{h}$ and $-\nabla \tilde{g}'$ for Morse functions $\tilde{g}$ and $\tilde{g}'$ on $\tilde{M}_T$ and $\tilde{h}$ on $\tilde{M}_R$.
We may assume that $\tilde{g}=\tilde{g}'$.
By the same argument as in the proofs earlier in this paper we see that the the flow line corresponding to $-\nabla\tilde{g}$ and the $\tilde{J}$-holomorphic sphere in the core all map to the same point under the projection $\pi$.
In other words, they are all contained in one fiber.
Suppose $\tilde{g}=\tau_K+ f$, $\tilde{h}=\tau_R + f'$ and $x=(\xi, a)$, $y=(\xi', b)$ and $z=(\xi, c)$, where $\xi$ is the critical point of $\tau_K$ and $\xi'$ the critical point of $\tau_R$.
Since $\tilde{h}$ is of the form $\tau_R+ f'$, there exists a unique flow line of $\tau_R$ from $\pi(y)=\xi$ to the fiber containing the core and the flow lines of $-\nabla \tilde{g}$.
Hence there exists a bijection between the elements in $\mathcal{M}_{prod}(x,y,z,\lambda;\tilde{g},\tilde{h},\tilde{g'})$ and the elements in 
the moduli space $\mathcal{M}_{prod}(a,b,c,\lambda;f,f',f)$, which proves the lemma.
\end{proof}


We can define a module structure of the quantum homology $QH_*(V,S)$ 
over the unital ring $QH_*(\tilde{M}_R,\partial \tilde{M}_R)$.
\begin{definition}\label{def:Pmod}
Let $\tilde{f}:V\rightarrow \mathbb{R}$ be a Morse function on $V$ respecting the exit 
region $S$.
 Let $x,y$ be two critical points of $\tilde{f}:V\rightarrow \mathbb{R}$ and $a$ a critical point of $\tilde{h}:\tilde{M}_R\rightarrow \mathbb{R}$.
Let $\lambda \in H_2^D(\tilde{M}_R,V)$ be a class, possibly zero.
Consider the space of all sequences $(u_1,\dots u_l;k)$ of every possible length $l\geq 1$,where
\begin{enumerate}
 \item $1\leq k\leq l$.
 \item $u_i:(D,\partial D)\rightarrow (\tilde{M}_R,V)$ is a $\tilde{J}$-holomorphic disk for every $1\leq i\leq l$, which is assumed to be non-constant, except possibly if $i=k$.
 \item $u_1(-1)\in W^u_x(\tilde{f})$.
 \item For every $1\leq i \leq l-1$ there exists $0 <t_i<\infty$ such that $\phi^{\tilde{f}}_{t_i}(u_i(1))=u_{i+1}(-1)$.
 \item $u_l(1)\in W^s_y(\tilde{f})$.
 \item $u_k(0)\in W^u_a(\tilde{h})$.
 \item $[u_1]+ \dots +[u_k]=\lambda$.
\end{enumerate}
Two elements $(u_1,\dots u_l;k)$ and $(u'_1,\dots u'_{l'};k')$ in this space are viewed as equivalent if $l=l'$, $k=k'$ and for every $i\neq k$ there exists an automorphism $\sigma_i\in Aut(D)$ fixing $-1$ and $1$ such that $\sigma_i\circ u_i=u'_i$.
The space of all such elements $(u_1,\dots u_l;k)$ modulo the above equivalence relation is denoted 
$\mathcal{P}_{mod}(x,y,a,\lambda;\tilde{h},\tilde{\rho}_{\tilde{M}_R},\tilde{f}, \tilde{
\rho}_{V},\tilde{J})$.
\end{definition}
We define the virtual dimension $\delta(x,y,a,\lambda):=|x|+|a|-|y|+\mu(\lambda)-(2n+2)$.
On the chain level for $a\in Crit(\tilde{h})$ and $x\in Crit(\tilde{f})$ we define
\begin{equation}\label{eq:Pmod}
 a\ast x:=\smashoperator{\sum\limits_{ \begin{subarray}{c} y,\lambda\\ \delta_{mod}(x,y,a,\lambda)=0\end{subarray}}}
\sharp \mathcal{P}_{mod}(x,y,a,\lambda;\tilde{h},\tilde{\rho}_{\tilde{M}_R},\tilde{f},
\tilde{\rho}_{V},
\tilde{J}) yt^{\overline{\mu}(\lambda)},
\end{equation}
where the sum is taken over the critical points $y\in Crit(\tilde{f})$, such that the corresponding moduli spaces have dimension zero.

We have the following result
\begin{lemma}\label{prop:modulestructureV}
 For a generic choice of the data 
$\tilde{f},\tilde{\rho}_{V},\tilde{h},\tilde{\rho}_{\tilde{M}_R}$ the map in~(\ref{eq:Pmod}) is well-defined and a chain map.
It induces a map in homology
\begin{equation*}
 \ast: QH_*(\tilde{M}_R,\partial \tilde{M}_R)\otimes QH_*(V,S) \rightarrow QH_*(V,S),
\end{equation*}
which is independent of the choice of the data.
\end{lemma}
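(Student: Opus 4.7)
The plan is to follow the structure of the corresponding arguments in \cite{QuantumStructures}, using the generalized open-mapping/escape result Lemma~\ref{lem:traj_awayfromboundary_moregeneral} to handle the fact that $V$ has boundary and $\tilde{M}_R$ is non-closed. The proof splits into three standard parts: well-definedness of the operation on the chain level, verification of the chain-map identity, and invariance of the induced homology-level map under the choice of data.

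First I would verify well-definedness. The moduli space $\mathcal{P}_{mod}(x,y,a,\lambda;\tilde{h},\tilde{\rho}_{\tilde{M}_R},\tilde{f},\tilde{\rho}_V,\tilde{J})$ is modeled over a planar tree of exactly the type covered by Lemma~\ref{lem:traj_awayfromboundary_moregeneral}: its $\tilde{f}$-edges are flow lines of a Morse function on $V$ respecting the exit region $S$, it has a single $\tilde{h}$-edge flowing out of $a\in Crit(\tilde{h})$ on $\tilde{M}_R$, and all $\tilde{J}$-holomorphic disks have boundary on $V$, with the distinguished disk $u_k$ carrying an incidence condition at $0$ to the unstable manifold of $a$. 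Choosing $\tilde{J}=i\oplus J$ outside $K\times M$, Lemma~\ref{lem:traj_awayfromboundary_moregeneral} together with Lemma~\ref{automatictransversality} guarantees that for a generic choice of $(\tilde{f},\tilde{\rho}_V,\tilde{h},\tilde{\rho}_{\tilde{M}_R},\tilde{J})$ the moduli space is a smooth manifold of dimension $\delta_{mod}(x,y,a,\lambda)$, that no configuration reaches $\partial V$ or $\partial\tilde{M}_R$, and that it is compact when $\delta_{mod}=0$. Hence the sum in~(\ref{eq:Pmod}) is finite, and extending $\Lambda$-linearly gives a well-defined operation on chain complexes.

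Second, to verify the chain-map property $d(a\ast x) = d(a)\ast x + (-1)^{|a|} a\ast d(x)$, I would analyze the boundary of the compactification $\overline{\mathcal{P}}$ of a $1$-dimensional moduli space $\mathcal{P}_{mod}(x,y,a,\lambda)$. By Lemma~\ref{lem:traj_awayfromboundary_moregeneral}, this compactification admits the same description as in the compact Lagrangian case: boundary strata come from (a) breaking of an $\tilde{f}$-flow line at a critical point of $\tilde{f}$, contributing the terms $a\ast d(x)$ and $d(a\ast x)$; (b) breaking of the $\tilde{h}$-flow line at a critical point of $\tilde{h}$, contributing $d(a)\ast x$; (c) contraction of a Morse trajectory to a constant point; and (d) bubbling of a $\tilde{J}$-holomorphic disk (including the distinguished one $u_k$). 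A dimension count using $N_V\geq 2$ and the minimal Chern number bound, together with the orientation analysis deferred to Section~9, shows that contributions of type (c) and (d) cancel pairwise when counted with signs. Since $\sharp\,\partial\overline{\mathcal{P}} = 0$ for any compact oriented $1$-manifold with boundary, the asserted identity follows.

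Finally, independence from the data is obtained by the standard comparison argument: given two generic choices $\mathcal{D}$ and $\mathcal{D}'$, I would introduce Morse cobordisms between $(\tilde{f},\tilde{\rho}_V)$ and $(\tilde{f}',\tilde{\rho}_V')$ and between $(\tilde{h},\tilde{\rho}_{\tilde{M}_R})$ and $(\tilde{h}',\tilde{\rho}_{\tilde{M}_R}')$, a generic homotopy $\tilde{J}_t$ of almost complex structures (each equal to $i\oplus J_t$ outside $K\times M$), and the associated comparison moduli spaces $\mathcal{P}^{comp}_{mod}$. Lemma~\ref{lem:traj_awayfromboundary_moregeneral} applies equally to these trees, so counting the $0$-dimensional comparison spaces yields a chain-level map, and counting the boundary of the $1$-dimensional ones yields a chain homotopy between the two realizations of $\ast$; the usual argument then shows that the induced operation on $QH_*(\tilde{M}_R,\partial\tilde{M}_R)\otimes QH_*(V,S)\to QH_*(V,S)$ is canonical. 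The main obstacle in all three steps is ensuring that no bubble or limit configuration escapes to $\partial V$ or $\partial\tilde{M}_R$ and that transversality persists at the possibly-constant distinguished disk $u_k$; these are exactly the points addressed by Lemmas~\ref{lem:traj_awayfromboundary_moregeneral} and~\ref{automatictransversality}, after which the argument is a direct translation of the closed-Lagrangian proof in~\cite{QuantumStructures}.
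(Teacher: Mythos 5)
Your proposal is correct and takes essentially the same route as the paper: the paper's proof consists of the single sentence "Applying Lemma~\ref{lem:traj_awayfromboundary_moregeneral} we see that the proof given in~\cite{QuantumStructures} adapts to our setting," and your three-step elaboration (well-definedness via Lemma~\ref{lem:traj_awayfromboundary_moregeneral} and Lemma~\ref{automatictransversality}, the Leibniz identity from the boundary description of $1$-dimensional compactified moduli spaces, and invariance via comparison moduli spaces) is exactly the intended unpacking of that sentence.
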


\begin{proof}[Proof of Lemma~\ref{prop:modulestructureV}]
Applying the Lemma~\ref{lem:traj_awayfromboundary_moregeneral} we see that the proof given in~\cite{QuantumStructures} adapts to our setting.
\end{proof}

There are four properties that are left to verify. They are stated in the following lemma.

\begin{lemma}
 For every $a,b \in QH(\tilde{M}_R,\partial \tilde{M}_R)$ and for every $x\in QH(V,S)$ the following properties are fulfilled.
\begin{itemize}
 \item $(a* b)\ast x=a* (b\ast x)$
 \item $e_{(\tilde{M}_R,\partial \tilde{M}_R)} \ast x=x$
\end{itemize}
 For any $a\in QH(\tilde{M}_R,\partial \tilde{M}_R)$ and any $x,y\in QH(V,\partial 
V)$ we have:
\begin{itemize}
 \item $a\ast (x*y)=(a\ast x)*y$
and
 \item $a\ast (x*y)=x*(a\ast y)$.
\end{itemize}
\end{lemma}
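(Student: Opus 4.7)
The plan is to establish each of the four identities by a standard cobordism argument: exhibit a one-parameter family of moduli spaces whose two boundary components compute the two sides of the identity, and then apply Lemma~\ref{lem:traj_awayfromboundary_moregeneral} to rule out boundary escape, so that the signed count of the ends vanishes. Throughout, the key observation is that all the composite moduli spaces we need are of the type covered by Lemma~\ref{lem:traj_awayfromboundary_moregeneral}: their edges correspond to negative gradient flows of Morse functions on $V$, $\tilde M_R$, or $\tilde M_T$, and their interior vertices correspond to $\tilde J$-holomorphic disks or spheres, with $\tilde J$ chosen so that $\pi$ is holomorphic outside a compact set. Hence for generic data these moduli spaces are smooth manifolds of the expected dimension and are compact in dimension zero, with a boundary in dimension one that admits the usual description.

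For the associativity $(a*b)\ast x = a*(b\ast x)$, I would introduce a moduli space of ``pearly trajectories with two marked interior points'', one on a $\tilde J$-holomorphic sphere constrained to pass through a critical point of a Morse function on $\tilde M_R$ representing $a$, and the other analogously for $b$. A one-parameter family indexed by the relative position of the two marked points along the pearly trajectory (or by bringing the two spheres together to a configuration with a trivalent sphere core) yields a cobordism. The two ends correspond to $a\ast(b\ast x)$ (when the two spheres are far apart along the Morse trajectory in $V$) and to $(a*b)\ast x$ (when the two spheres merge via a three-pointed sphere, realizing the quantum product in $\tilde M_R$). For the unit identity $e_{(\tilde M_R,\partial \tilde M_R)}\ast x = x$, I would use the explicit representative from Lemma~\ref{prop:Kunneth} and Remark~\ref{rmk:Kunneth}, namely $R\times[pt]$, represented by the single maximum of a Morse function $\tilde h=\tau_R+f$ on $\tilde M_R$ with $\tau_R$ having a unique critical point of index $2$ and $f$ having a unique maximum on $M$. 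A dimension count, combined with the fact that $\pi$ restricted to the holomorphic disks in $\mathcal{P}_{mod}$ must be constant in a neighborhood of the boundary, forces $\mu(\lambda)=0$, reducing the count to the Morse count of a single gradient segment of $-\nabla \tilde f$ on $V$ passing through the unique maximum of $\tilde h$, which contributes $+1$ with the conventions of the orientation scheme.

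For the two compatibility identities $a\ast(x*y)=(a\ast x)*y$ and $a\ast(x*y)=x*(a\ast y)$, I would consider moduli spaces that simultaneously carry a figure-$Y$ configuration (for the product $x*y$) and a marked interior point on one of its edges or on its core disk constrained through a critical point of $\tilde h$ representing $a$. A one-parameter family that slides this ambient constraint along the figure-$Y$ realizes the cobordism: when the insertion point moves off into the incoming edge carrying $x$ the configuration degenerates to $(a\ast x)*y$, when it moves into the incoming edge carrying $y$ it degenerates to $x*(a\ast y)$, and when it sits on the outgoing edge the configuration degenerates to $a\ast(x*y)$. By running separate cobordisms connecting pairs of these three configurations, I obtain both equalities. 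The monoid-graded version of the product from Proposition~\ref{prop:P_prod_general} and Remark~\ref{rmk:prodgeneral} is needed so that the exit regions match up (the output lies in $QH(V,S\cap S')$, with $a$ acting unchanged on both sides since $a\in QH(\tilde M_R,\partial\tilde M_R)$ does not alter the exit region).

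The main obstacle will be the bookkeeping of signs and orientations in the compactification of these one-parameter families, particularly in the third and fourth identities where the figure-$Y$ geometry and the interior ambient marked point interact and where degenerations of the core disk can shift the marked point between edges. One must verify that the boundary description really consists of precisely the three types listed in Proposition~\ref{prop:LQHforLC} together with the codimension-one degenerations corresponding to the two sides of each identity; Lemma~\ref{lem:traj_awayfromboundary_moregeneral} rules out any escape to $\partial V$ or $\partial\tilde M_R$, so no extra boundary strata appear. Once this is done the identities follow on homology by the usual signed count of endpoints of a compact one-manifold, and the proof parallels the analogous statements for closed Lagrangians treated in~\cite{QuantumStructures}.
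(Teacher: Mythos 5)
Your proposal takes the same route as the paper, which simply reduces all four identities to their closed-Lagrangian analogues in~\cite{QuantumStructures} and invokes Lemma~\ref{lem:traj_awayfromboundary_moregeneral} to guarantee that the relevant compactified one-dimensional moduli spaces have no extra boundary strata coming from escape to $\partial V$, $\partial\tilde{M}_R$, or $\partial\tilde{M}_T$; your write-up just unpacks the cobordism arguments behind the citation. One small slip: the unit $e_{(\tilde{M}_R,\partial\tilde{M}_R)}$ corresponds under $\Phi_R$ to $R\times[M]$, not $R\times[pt]$, though the geometric representative you identify (the single maximum of $\tilde{h}=\tau_R+f$) is the correct one.
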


\begin{proof}
 For the proof see~\cite{QuantumStructures}. 
Lemma~\ref{lem:traj_awayfromboundary_moregeneral} shows once again that the proof directly generalizes to give a proof for the non-compact setting.
\end{proof}

\section{The Inclusion}\label{sec:inclusion}

The quantum homology $QH_*(\tilde{M}_R,\partial \tilde{M}_R)$ has the structure of a ring with unit.
The advantage of $QH_*(\tilde{M}_T,\partial \tilde{M}_T)$ over $QH_*(\tilde{M}_R,\partial \tilde{M}_R)$ lies in the fact that there exists a (possibly non-trivial) inclusion map of the Lagrangian quantum homology of the cobordism $QH_*(V,S)$ into $QH_*(\tilde{M}_T,\partial \tilde{M}_T)$. 
However, the inclusion of $QH_*(\tilde{M}_T,\partial \tilde{M}_T)$ into $QH_*(\tilde{M}_R,\partial \tilde{M}_R)$ is always trivial.

\begin{proposition}\label{prop:inclusion}
 There exists a $QH_*(\tilde{M}_R,\partial \tilde{M_R})$- linear inclusion map
\begin{equation*}
 i_{(V,S)}:QH_*(V,S) \to QH_*(\tilde{M}_T,\partial \tilde{M}_T),
\end{equation*}
which extends on the chain level the inclusion in singular homology.
\end{proposition}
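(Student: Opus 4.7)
The plan is to mimic the chain-level construction of the Lagrangian-to-ambient inclusion from the compact setting (as in~\cite{QuantumStructures} and~\cite{LagrTop}), with the key analytic input being that all relevant moduli spaces stay away from $\partial V$ and $\partial \tilde{M}_T$, which is precisely the content of Lemma~\ref{lem:traj_awayfromboundary_moregeneral}. First I would fix a Morse function $\tilde{f}$ on $V$ respecting the exit region $S$, a Morse function $\tilde{g}$ on $\tilde{M}_T$ with $-\nabla\tilde{g}$ pointing outward along $\partial\tilde{M}_T$ and inward along $\partial\overline{\tilde{M}_T}\setminus\partial\tilde{M}_T$ (for instance of the form $\tau_T + f$ as in Section~\ref{sec:ambient_T_R}), metrics, and a generic $\omega$-compatible almost complex structure $\tilde{J}$ that splits as $i\oplus J$ outside a compact subset of $T\times M$ containing $K\times M$.

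Next, for $x\in Crit(\tilde{f})$, $\alpha\in Crit(\tilde{g})$ and a class $\lambda\in H_2^D(\tilde{M}_T,V)$, I introduce a moduli space $\mathcal{P}_{incl}(x,\alpha,\lambda;\tilde{f},\tilde{g},\tilde{J})$ of ``half pearly trajectories with a distinguished interior marked point''. Its elements are sequences $(u_1,\dots,u_l;k)$, $1\le k\le l$, of $\tilde{J}$-holomorphic disks $u_i:(D,\partial D)\to(\tilde{M}_T,V)$ (non-constant except possibly $u_k$), joined by positive-time flow lines of $-\nabla\tilde{f}$, with $u_1(-1)\in W^u_x(\tilde{f})$, the marked point $u_k(0)\in W^u_\alpha(\tilde{g})$, and total class $\sum[u_i]=\lambda$. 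A direct count gives virtual dimension $\delta_{incl}(x,\alpha,\lambda)=|\alpha|-|x|-\mu(\lambda)$, chosen so that, after inserting $t^{\overline{\mu}(\lambda)}$, the resulting map is degree-preserving. Setting
\[
i_{(V,S)}(x):=\sum \pm\, \sharp\,\mathcal{P}_{incl}(x,\alpha,\lambda)\,\alpha\, t^{\overline{\mu}(\lambda)},
\]
with the sum over $(\alpha,\lambda)$ with $\delta_{incl}=0$, and extending $\Lambda$-linearly, produces the candidate map. That it is a chain map follows from the standard description of the codimension-one boundary of the one-dimensional version of $\mathcal{P}_{incl}$: the only contributions are breakings of a $-\nabla\tilde{f}$- or $-\nabla\tilde{g}$-orbit and disk bubblings, and the signed count of these strata matches $d\circ i_{(V,S)}-i_{(V,S)}\circ d$. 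Independence from the auxiliary data is proved by the usual comparison-moduli-space argument, and extension of the singular inclusion is obtained by isolating the contribution of the trivial class $\lambda=0$, where the configuration reduces to an ordinary Morse-theoretic intersection of unstable manifolds of $\tilde{f}$ and $\tilde{g}$.

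Finally, to establish $QH_*(\tilde{M}_R,\partial\tilde{M}_R)$-linearity, i.e.\ $i_{(V,S)}(a\ast x)=a\ast i_{(V,S)}(x)$ for $a\in QH_*(\tilde{M}_R,\partial\tilde{M}_R)$ and $x\in QH(V,S)$ (the action on $QH_*(\tilde{M}_T,\partial\tilde{M}_T)$ being the one from~\eqref{eq:incl_ambient}), I would introduce a single family of hybrid moduli spaces carrying two interior marked points: one constrained to lie in $W^u_\beta(\tilde{h})$ for $\beta\in Crit(\tilde{h})$ on $\tilde{M}_R$, and the other in $W^u_\alpha(\tilde{g})$ on $\tilde{M}_T$. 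Examining the codimension-one boundary strata of the one-dimensional versions produces a chain homotopy realising $i_{(V,S)}(a\ast x)-a\ast i_{(V,S)}(x)$ as a $d$-boundary, hence the equality on homology.

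The hardest part will be the careful sign bookkeeping in the module-linearity step, which juggles three Morse functions together with the splitting behaviour of $\tilde{J}$ near the boundaries of $\tilde{M}_T$ and $\tilde{M}_R$; one must verify that the orientation conventions of~\cite{LagrTop} extend coherently to this hybrid setting. The analytic input — smoothness, compactness and the standard description of the codimension-one boundary of the moduli spaces — is essentially automatic from Lemmas~\ref{lem:curve_openmapping},~\ref{automatictransversality} and~\ref{lem:traj_awayfromboundary_moregeneral}, combined with the methods of~\cite{QuantumStructures}.
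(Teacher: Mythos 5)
Your proposal follows essentially the same route as the paper: define $\mathcal{P}_{inc}$-type moduli spaces as in~\cite{QuantumStructures} (with $\tilde{f}$ respecting the exit region $S$ and $\tilde{g}$ the special Morse function on $\tilde{M}_T$), count zero-dimensional spaces to get the chain map, note that the $\lambda=0$ part recovers the singular inclusion, and invoke Lemma~\ref{lem:traj_awayfromboundary_moregeneral} to transfer the compactness/transversality arguments to the non-compact setting. The paper's proof is terser — it simply cites the construction in~\cite{QuantumStructures} together with that lemma and leaves the module-linearity implicit — whereas you spell out the configurations and the hybrid moduli space needed for $QH_*(\tilde{M}_R,\partial\tilde{M}_R)$-linearity, but the underlying argument is the same.
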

\begin{proof}
We define the moduli spaces $\mathcal{P}_{inc}(x,a,\lambda; \tilde{f},\tilde{g},\tilde{J})$, which coincides with the definition in~\cite{QuantumStructures}, except that $\tilde{f}$ is a Morse function on $V$ adapted to the exit region $S$ and $\tilde{g}$ is a Morse function on $\tilde{M}_T$ as we defined it in section~\ref{sec:ambient_T_R}.
We put
\begin{equation*}
\begin{array}{ccccc}
 i:& C((V,S);\tilde{f},\tilde{\rho},\tilde{J})& 
\longrightarrow &
C((\tilde{M}_T,\partial \tilde{M}_T);\tilde{g},\tilde{\rho},\tilde{J})\\
		& x & \longmapsto & \smashoperator{\sum\limits_{\begin{subarray}{c}
						a,\lambda \\
 						\delta_{inc}(x,a,\lambda)=0\\
						\end{subarray}}}		
\sharp\mathcal{P}_{inc}(x,a,\lambda;\tilde{f},\tilde{g}) a t^{\overline{\mu}(\lambda)}\\
\end{array}.
\end{equation*}

If we restrict the sum in the above equation to moduli spaces with $\lambda=0$ then this is exactly the inclusion given in singular homology.
By Lemma~\ref{lem:traj_awayfromboundary_moregeneral}, the proof given in~\cite{QuantumStructures} still works for the non-compact case.

\end{proof}

\begin{remark}
 It is easy to see that $i(e_{(V,S)})=[(V,S)]\in QH(\tilde{M}_T,\partial \tilde{M}_T)$.
In particular if the inclusion is trivial then $[(V,S)]$ is trivial in $QH(\tilde{M}_T,\partial \tilde{M}_T)$.
\end{remark}

\begin{lemma}
 The inclusion map from Proposition~\ref{prop:inclusion} satisfies the property
\begin{equation}
 \langle h^*,i_L(x)\rangle=\epsilon(h*x),
\end{equation}
for every $x\in QH_*(L)$, $h\in QH_*(M)$. 
\end{lemma}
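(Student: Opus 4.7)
The plan is to verify this identity at the chain level, following the template of the closed Lagrangian argument in~\cite{QuantumStructures} and using Lemma~\ref{lem:traj_awayfromboundary_moregeneral} to guarantee that the relevant moduli spaces behave as in the compact setting. First I would fix the auxiliary data so that both sides of the identity are visibly counts of $0$-dimensional moduli spaces. By Lemma~\ref{lem:singlemin} choose a Morse function $\tilde{f}$ on $V$ respecting the exit region $S$ with a unique minimum $x_0$; on $\tilde{M}_T$ choose a Morse function $\tilde{g}$ with a critical point $a$ whose Morse cochain represents $h^*$ under Poincaré duality, and on $\tilde{M}_R$ choose a Morse function $\tilde{h}$ containing a critical point $c_h$ representing $h$ under the isomorphism $\Phi_R$ of Lemma~\ref{prop:Kunneth}. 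With these choices the left-hand side $\langle h^*, i_{(V,S)}(x)\rangle$ is, on the chain level, the signed count of elements of $\mathcal{P}_{inc}(x,a,\lambda)$ with $\delta_{inc}=0$, while the right-hand side $\epsilon_V(h*x)$ is the signed count of $\mathcal{P}_{mod}(x,x_0,c_h,\lambda)$ with $\delta_{mod}=0$ (using that $\epsilon_V$ reads off the coefficient of $x_0$, by Proposition~\ref{prop:min_notboundary}).

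The main step is then to exhibit these two counts as the two ends of a $1$-parameter family of configurations. Concretely, I would introduce a comparison moduli space $\mathcal{P}^s$, parametrised by $s\in[0,1]$, of pearly trajectories starting at $x$ in which one distinguished marked point carries, at $s=0$, an incidence with the unstable manifold $W^u_{c_h}(\tilde{h})\subset\tilde{M}_R$ followed by a pearly continuation to $x_0$, and at $s=1$, an extension by a $-\nabla\tilde{g}$-flow line in $\tilde{M}_T$ terminating at $a$. A homotopy sliding the marked point along an appropriate path between the two types of incidence data (bridging the fibrewise descriptions of $\tilde{M}_R$ and $\tilde{M}_T$ via Remark~\ref{rmk:Kunneth}) furnishes a $1$-dimensional cobordism whose two endpoint strata realize precisely the two signed counts above.

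Next I would analyse the boundary of the compactification of this $1$-dimensional family. By Lemma~\ref{lem:traj_awayfromboundary_moregeneral}, no sequence in this family can escape to $\partial V$, $\partial \tilde{M}_T$, or $\partial \tilde{M}_R$, so every boundary stratum is of the standard types (1)--(3) listed in the proof of Proposition~\ref{prop:LQHforLC} or one of the two $s$-endpoint contributions. Grouping the interior breakings so that they pair with the differentials $d_{(V,S)}$, $d_{(\tilde{M}_T,\partial \tilde{M}_T)}$, and the corresponding comparison maps, all of them cancel when evaluated on the homology classes $[x]$, $[h]$, $[x_0]$, $[a]$, leaving only the two endpoint counts. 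Setting their signed sum to zero gives the chain-level identity, which passes to homology.

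The main obstacle I anticipate is the bookkeeping of orientations and the verification that the homotopy sliding the marked point between the $\tilde{M}_R$- and $\tilde{M}_T$-pictures is genuinely transverse and realizes the expected cobordism: this requires unpacking the Poincaré pairing conventions in~\cite{QuantumStructures} and matching them with the orientations of $\mathcal{P}_{inc}$ and $\mathcal{P}_{mod}$ as fibre products of oriented unstable and stable manifolds, as described in Section~\ref{sec:LQHforLC}. Once the signs are nailed down, the transversality and compactness of the interpolating $1$-dimensional moduli space are immediate from Lemma~\ref{lem:traj_awayfromboundary_moregeneral} combined with Lemma~\ref{automatictransversality}, and the identity reduces to the closed-case argument of~\cite{QuantumStructures}.
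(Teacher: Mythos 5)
Your approach diverges from the paper's. The paper's proof is a one-liner: it observes that in the closed case~\cite{QuantumStructures} the identity is established by exhibiting a \emph{bijection} between the $0$-dimensional moduli spaces computing $\langle h^*, i_L(x)\rangle$ and those computing $\epsilon(h*x)$, and then invokes Lemma~\ref{lem:traj_awayfromboundary_moregeneral} to conclude that this bijection persists in the cobordism setting because all relevant configurations stay away from $\partial V$, $\partial\tilde{M}_T$, $\partial\tilde{M}_R$. No interpolating family or chain homotopy is needed.

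Your proposal instead tries to manufacture a $1$-parameter cobordism $\mathcal{P}^s$ between the two moduli spaces, but this is where there is a genuine gap. The configuration at $s=0$ lives over the ambient manifold $\tilde{M}_R$ (a marked interior point constrained to $W^u_{c_h}(\tilde{h})\subset\tilde{M}_R$, followed by a pearly continuation to $x_0$), while the configuration at $s=1$ lives over $\tilde{M}_T$ (a $-\nabla\tilde{g}$-flow in $\tilde{M}_T$ terminating at $a$). These are two different ambient manifolds with different boundary conditions, and there is no canonical geometric deformation between "flow to a critical point of $\tilde{h}$ in $\tilde{M}_R$" and "flow to a critical point of $\tilde{g}$ in $\tilde{M}_T$." Invoking Remark~\ref{rmk:Kunneth} only identifies \emph{homology groups} via Künneth; it does not produce a path of incidence conditions or a moduli problem that one could prove transverse and compact. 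Until you specify the ambient data at intermediate $s$ (which manifold, which Morse function, which evaluation constraint), there is no moduli space to compactify, so the boundary analysis you sketch in the third paragraph has nothing to act on. This is not a bookkeeping issue of orientations; it is a missing construction. The paper's direct bijection sidesteps the problem entirely: one identifies a $0$-dimensional pearly trajectory reaching $a\in\tilde{M}_T$ with a $0$-dimensional pearly trajectory to $x_0$ carrying the Poincaré-dual incidence constraint, and this identification carries over verbatim once Lemma~\ref{lem:traj_awayfromboundary_moregeneral} guarantees boundary avoidance. I would recommend replacing the cobordism scheme by that bijection.

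Your chain-level setup (choice of $\tilde{f}$ with unique minimum $x_0$ via Lemma~\ref{lem:singlemin}, representing $h^*$ and $h$ by appropriate critical points of $\tilde{g}$ and $\tilde{h}$, reading off $\epsilon_V$ as the coefficient of $x_0$ via Proposition~\ref{prop:min_notboundary}) is sound and matches what the paper implicitly does; it is only the interpolation step that does not go through.
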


\begin{proof}
In~\cite{QuantumStructures} the analogous identity for the compact case is proven by showing that there exists a bijection between the moduli spaces defining the left hand side of the equation and the moduli spaces defining the right hand side.
By Lemma~\ref{lem:traj_awayfromboundary_moregeneral} this proof adjust to our setting.
\end{proof}

\section{Duality}\label{duality}

We start this section by introducing some notation, which can also be 
found in~\cite{QuantumStructures}.
Let $C^{\odot}:=hom_{\Lambda}(C((V,S);\tilde{f},\rho,J),\Lambda)$ such that the dual of 
$x$ has index $|x^*|:=-|x|$ and the differential is given by
\begin{equation}\label{eq:adjointdiff}
 \langle \partial^*g,x\rangle:= -(-1)^{|g|}\langle g,\partial x\rangle.
\end{equation}

We denote by $s^jC$ the $j$'th suspension of the complex $C$, i.e. $(s^jC)_k:=C_{k-j}$.
\begin{remark}\label{rmk:cohom} 
 Let $C^*$ denote the cochain complex, which is dual to $C$, namely $C^k:= 
Hom_{\mathcal{R}}(C_k,\mathcal{R})\otimes (\Lambda)^*$, where the grading is given by 
$|x^*|=|x|$ and the differential is the adjoint of the differential of $C$.
Notice that the chain complex $C^{\odot}$ is the same as $C^*$, only with opposite signs 
in the grading. 
Therefore we have an isomorphisms of the following form
\begin{equation*}
 H_k(s^{(n+1)}C^{\odot})\cong H_{k-(n+1)}(C^{\odot})\cong H^{(n+1)-k}(C^*).
\end{equation*}
In particular we define $QH^{(n+1)-k}(V,S)$ to be the $ k$'th cohomology of the cochain 
complex $C((V,S);\tilde{f},\rho,J,\Lambda)^*$.
$$QH^{(n+1)-k}(V,S):=H^k(C((V,S);\tilde{f},\rho,J,\Lambda)^*.)$$
\end{remark}

\begin{proposition}
Let $\tilde{f}'$ and $\tilde{f}$ be two Morse functions respecting the exit region $S$ 
and in general position.
 There exist a degree preserving chain morphism
\begin{equation*}
\eta:C((V,S);\tilde{f}',\rho,J)\rightarrow 
s^{(n+1)}(C(V,\partial V \setminus S);-\tilde{f},\rho,J))^{\odot},
\end{equation*}
which descends to an isomorphism in homology.
By remark~\ref{rmk:cohom} this induces an isomorphism
\begin{equation}
 \eta:QH_k(V,S)\rightarrow QH^{(n+1)-k}(V,\partial V \setminus S).
\end{equation}
The corresponding (degree $-(n+1)$) bilinear map
\begin{equation}
\begin{array}{ccc}
 \tilde{\eta}:QH(V,S) \otimes QH(V,\partial V \setminus S) & \rightarrow & \Lambda\\
x\otimes y & \mapsto & [\eta(x)(y)]
\end{array}
\end{equation}
coincides with $\epsilon_V (x *y)$.
Here, $\Lambda$ denotes the chain complex, with $\Lambda$ in degree zero, and zero else 
and with trivial differential.
In particular, $\tilde{\eta}(x\otimes y)=0$ if $|x|+|y|\not= n+1$.
\end{proposition}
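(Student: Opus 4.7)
The strategy is to factor $\eta$ through a ``reversal'' chain isomorphism and a comparison chain map, and then to identify the induced pairing with $\epsilon_V(x*y)$ via a moduli-space argument. The key preliminary observation is that $-\tilde f$ is itself a Morse function on $V^{\epsilon}$ respecting the exit region $\partial V \setminus S$, since negating $\tilde f$ reverses the direction in which its negative gradient crosses each boundary component; the critical points of $\tilde f$ and $-\tilde f$ coincide as sets, with Morse indices related by $|y|_{-\tilde f} = (n+1) - |y|_{\tilde f}$. A short computation in the notation of Remark \ref{rmk:cohom} shows that the dual generator $y^{\odot}$ in $s^{(n+1)} C((V,\partial V\setminus S);-\tilde f,\rho,J)^{\odot}$ has degree $|y|_{\tilde f}$.

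I would then define
\begin{equation*}
\iota: C((V,S);\tilde f,\rho,J) \longrightarrow s^{(n+1)} C((V,\partial V\setminus S);-\tilde f,\rho,J)^{\odot},\qquad x \longmapsto x^{\odot},
\end{equation*}
and verify that $\iota$ is an isomorphism of chain complexes. The crucial point is the canonical bijection between pearly trajectories from $x$ to $z$ in the $\tilde f$-complex and pearly trajectories from $z$ to $x$ in the $(-\tilde f)$-complex, obtained by reversing each gradient flow segment and swapping the $-1$ and $+1$ marked points on every $\tilde J$-holomorphic disk; neither the class $\lambda \in H_2^D(\tilde M,V)$ nor $\tilde J$-holomorphicity is disturbed, and Lemma \ref{lem:traj_awayfromboundary_general} keeps the moduli spaces away from $\partial V^{\epsilon}$. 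Signs then match the adjoint convention of \eqref{eq:adjointdiff} after the standard orientation bookkeeping using the spin structure on $V$. With $\phi_{\mathcal{D}'}^{\mathcal{D}}: C((V,S);\tilde f',\rho,J) \to C((V,S);\tilde f,\rho,J)$ the comparison quasi-isomorphism of Proposition \ref{prop:LQHforLC}, I set $\eta := \iota \circ \phi_{\mathcal{D}'}^{\mathcal{D}}$, obtaining a degree-preserving chain morphism that descends to the required isomorphism $QH_k(V,S) \xrightarrow{\cong} QH^{(n+1)-k}(V,\partial V\setminus S)$.

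For the identity $\tilde\eta(x\otimes y) = \epsilon_V(x*y)$, degree preservation forces the pairing to vanish unless $|x|+|y| = n+1$, in which case the virtual dimension formulas on both sides combined with the monotonicity of $V$ force $\mu(\lambda)=0$ and thus $\lambda=0$ for every contributing configuration; both counts then reduce to purely Morse-theoretic ones. On the $\tilde\eta$ side one counts comparison Morse trajectories from $x$ (critical of $\tilde f'$) to $y$ (critical of $\tilde f$), while $\epsilon_V(x*y)$ counts figure-$Y$ Morse configurations from $(x,y)$ to the unique minimum $x_0$ of a Morse function as in Lemma \ref{lem:singlemin}. I would construct a one-parameter family of Morse configurations that collapses the exit leg of the $Y$ onto $x_0$: in the limiting configuration the $Y$-vertex degenerates and the two input arms fuse into a single comparison trajectory from $x$ to $y$, yielding the desired bijection of $0$-dimensional moduli spaces. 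The main obstacle is matching orientations through this gluing, which must be tracked consistently through the product, augmentation, and reversal conventions; the compactness and transversality of the interpolating family follow from Lemma \ref{lem:traj_awayfromboundary_general} together with the standard spin-structure orientation theory.
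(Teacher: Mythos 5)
Your construction of $\iota$ and $\eta=\iota\circ\phi$ matches the paper, and your observation that the degree condition $|x|+|y|=n+1$ together with monotonicity forces $\mu(\lambda)=0$ in all contributing zero-dimensional configurations is correct (the paper uses this implicitly). However, the proof of $\tilde\eta(x\otimes y)=\epsilon_V(x*y)$ has a genuine gap.

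You claim a direct bijection between ``comparison Morse trajectories from $x$ to $y$'' and figure-$Y$ configurations ending at the minimum, and you propose to produce it by a one-parameter family that ``collapses the exit leg.'' There are two problems with this. First, the comparison map $\phi$ is defined using a Morse cobordism $(F,G)$ between $(\tilde f',\rho)$ and $(\tilde f,\rho)$, not by counting intersections $W^u_{\tilde f'}(x)\cap W^s_{\tilde f}(y)$; the figure-$Y$ count, on the other hand, reduces (after forgetting the uniquely determined third arm to $m$) precisely to $W^u_{\tilde f'}(x)\cap W^s_{\tilde f}(y)$. These are two \emph{different} chain maps that agree only up to chain homotopy. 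The missing ingredient is precisely the argument in the paper: introduce the moduli space $\mathcal{P}^!(x,y;\tilde f',\tilde f)$ of linear pearly trees with a marked point in place of one disk, show that it defines a chain map $\phi'$ that is chain homotopic to the Morse-cobordism comparison map $\phi$ (so that on cycles $\iota\phi'(x)(y)=\iota\phi(x)(y)$), and only then compare $\mathcal{P}^!$ with $\mathcal{P}_{prod}(x,y,m;\tilde f',-\tilde f,\tilde f'')$. Second, the bijection between the latter two moduli spaces does not arise from a degeneration of anything: since $m$ is the unique minimum of $\tilde f''$, the flow line of $-\nabla\tilde f''$ from the (constant) core disk to $m$ exists and is unique, so one simply forgets it in one direction and adds it back in the other. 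There is no collapse, fusion, or limiting configuration to track, and the two input arms of the $Y$ already constitute the $\mathcal{P}^!$ element by concatenation — nothing ``fuses.'' As written, your argument conflates these two distinct steps and omits the chain-homotopy justification, which is what actually makes the identity hold at the level of homology rather than at the chain level.
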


\begin{proof}
We sketch the differences to the proof of the analogous statement for the compact case 
in~\cite{QuantumStructures}.
Let $\tilde{f}$ be a Morse function respecting the exit region $S$, then $-\tilde{f}$ is 
a Morse function respecting the exit region $\partial V \setminus S$. 
We have a basis preserving isomorphism $\iota$ between 
$C((V,S);\tilde{f},\rho,J,\Lambda)$ and $s^{(n+1)} C((V,\partial V \setminus 
S);-\tilde{f},\rho,J,\Lambda)^{\odot}$ defined as follows.
It takes a critical point $x$ of $\tilde{f}$ and sends it to the same critical point, now 
seen as a critical point of $-\tilde{f}$.
As a generator in $s^{(n+1)} C((V,\partial V \setminus 
S);-\tilde{f},\rho,J,\Lambda)^{\odot}$, the critical point $x$ has index 
$-(n+1-k)+(n+1)=k$.
With the sign convention in~(\ref{eq:adjointdiff}) this is a chain morphism.
We then compose $\iota$ with the comparison morphism 
$\phi:C((V, S);\tilde{f}',\rho,J,\Lambda)\rightarrow C((V, S);\tilde{f},\rho,J,\Lambda)$, 
which induces a canonical isomorphism in homology.
The composition $\eta:=\iota \circ 
\phi:C((V,S);\tilde{f}',\rho,J,\Lambda)\rightarrow s^{(n+1)} C((V,\partial V \setminus 
S);-\tilde{f},\rho,J,\Lambda)^{\odot}$ induces an isomorphism
$QH_k(V,S)\rightarrow QH^{(n+1)-k}(V,\partial V \setminus S)$. This proves the first part 
of the theorem.

In order to prove the second part of the theorem we need to define another moduli space. 
This moduli space is modeled on linear trees similar to the pearly trajectories, except 
that one edge corresponds to a marked point instead of a pseudo-holomorphic disk.
The linear tree connects a critical point $x$ of a Morse function $\tilde{f}'$ respecting 
the exit region $S$ to a critical point $y$ of a Morse function $\tilde{f}$ also 
respecting the exit region $S$.
Then the edges between $x$ and the marked point are labeled by the negative gradient flow 
lines of $\tilde{f}'$ and the edges between the marked point and $y$ are labeled by the 
negative gradient flow lines of $\tilde{f}$.
We call these moduli spaces $\mathcal{P}^{!}(x,y;\tilde{f}',\tilde{f})$.
As in~\cite{QuantumStructures} one can see that the moduli spaces 
$\mathcal{P}^{!}(x,y;\tilde{f}',\tilde{f})$ induce a chain map $\phi'$ which is chain 
homotopic to the comparison map $\phi$.
Therefore, counting the zero dimensional moduli 
spaces $\mathcal{P}^{!}(x,y;\tilde{f}',\tilde{f})$ gives $<\phi'(x),y>=<\phi(x),y>=i\circ 
\phi(x)(y)$.

Let $\tilde{f''}$ be a Morse function respecting the exit region $\emptyset$. 
In particular we may assume that $\tilde{f''}$ has a unique minimum $m$ inside $V$.
Consider the moduli spaces $\mathcal{P}_{prod}(x,y,m;\tilde{f}',-\tilde{f},\tilde{f}'')$.
The zero-dimensional moduli spaces of this sort compute exactly $\epsilon_V(x*y)$.

As in~\cite{QuantumStructures} we argue that in dimension zero, the moduli spaces 
$\mathcal{P}_{prod}(x,y,m;\tilde{f}',-\tilde{f},\tilde{f}'')$ and 
$\mathcal{P}^{!}(x,y;\tilde{f}',\tilde{f})$ are in bijection. 
Indeed, if $dim \mathcal{P}_{prod}(x,y,m)=0$, the central disk with valence three is 
constant and there is a unique flow line of $-\nabla \tilde{f}''$ from this point to $m$.
\end{proof}

\section{Proof of Theorem~\ref{thm:longexact}}\label{chap:longexact}

We start by proving the existence of the long exact sequence in homology.
For this we define a special Morse functions on $(V,S)$.
Recall that we assume the following.
On the cylindrical ends of $V^{\epsilon}$ the almost complex structure $\tilde{J}$ splits into $i\oplus J$ for some $J$ on $M$ and $i$ the standard complex structure on $\mathbb{R}^2$.
Likewise on the cylindrical ends of $V^{\epsilon}$, the metric $\rho$ is of the form $\rho^{\pm}\oplus \rho_M$, for some metric $\rho^{\pm}$ on $\coprod L_i^-$ and $\coprod L_j^+$.
\begin{definition}\label{def:fadaptedS}
Let $S$ be the union of some of the ends of $V$, i.e. 
$$S= (\coprod_{i \in I_-} \{(R_-,a_i^-)\}\times L_i^-) \cup (\coprod_{j\in J_+} \{(R_+,a_j^+)\}\times L_j^+),$$
where $I_-\subset \{1,\dots k_-\}$ and $J_+\subset \{1,\dots k_+\}$.
Denote by 
$$S^{\epsilon}:=(\coprod_{i \in I_-} \{(R_- - \epsilon,a_i^-)\}\times L_i^-) \cup  (\coprod_{j\in J_+} \{(R_+ + \epsilon, a_j^+)\}\times L_j^+)$$ 
the corresponding union of connected components of $\partial V^{\epsilon}$.
Let $\tilde{f}: V^{\epsilon} \rightarrow \mathbb{R}$ be a Morse function on  $V^{\epsilon}$, such that $-\nabla \tilde{f}$ is transverse to the boundary $\partial V^{\epsilon}$ and points outside along $S^{\epsilon}$ and inside along $\partial V^{\epsilon} \setminus S^{\epsilon}$.
Moreover we want $\tilde{f}$ to fulfill the following properties.
\begin{equation*}
        \begin{array}{lll}
        \tilde{f}(t,a_j^+,p)=f_j^+(p)+\sigma_j^+(t) & 
\sigma_j^+:[R_+,R_+ +\epsilon]\rightarrow \mathbb{R}, & p\in M, j=1,\dots, k_+, \\
	\tilde{f}(t,a_i^-,p)=f_i^-(p)+\sigma_i^-(t) & \sigma_i^-:[R_--\epsilon,R_- ]\rightarrow
\mathbb{R}, & p\in M, i=1,\dots, k_-, \\
       \end{array},
\end{equation*}
where $f_j^+:L_j^+\rightarrow \mathbb{R}$ and $ f_i^-:L_i^- \rightarrow \mathbb{R}$ are Morse functions on $L_j^+$ and $L_i^-$ respectively.
The functions $\sigma_j^+$ and $\sigma_i^-$ are also Morse, each with a unique critical point and satisfying
\begin{enumerate}
  \item [(i)] $\sigma_j^+(t)$ is a non-constant linear function for $t \in [R_+ + \frac{3\epsilon}{4},R_+ + \epsilon]$ and $\sigma_j^+(t)$ is decreasing in this interval if $j\in J_+$ and increasing if $j\in  \{1,\dots,k_+\}\setminus J_+$.
Furthermore $\sigma_j^+(t)$ has a unique critical point at $R_+ + \frac{\epsilon}{2}$ of index $1$ if $i\in J_+$ and of index $0$ if $j\in \{1,\dots,k_+\}\setminus J_+$
  \item [(ii)] $\sigma_i^-(t)$ is a non-constant linear function for $t \in [R_- -\epsilon, R_- - \frac{3\epsilon}{4}]$ and $\sigma_i^-(t)$ is increasing in this interval if $i\in I_-$ and decreasing if $i\in  \{1,\dots,k_-\}\setminus I_-$.
Furthermore, $\sigma_i^-(t)$ has a unique critical point at $R_- - \frac{\epsilon}{2}$ of index $1$ if $i\in I_-$ and of index $0$ if $i\in \{1,\dots,k_-\}\setminus I_-$
\end{enumerate}
We call such a function on $V^{\epsilon}$ a Morse function adapted to the exit region $S^{\epsilon}$.
More generally, we denote $S^{\eta}:=(\coprod_{i \in I_-} \{(R_- - \eta,a_i^-)\}\times L_i^-) \cup (\coprod_{j\in J_+} \{(R_+ + \eta,a_j^+)\}\times L_j^+)$, for $0\leq\eta\leq\epsilon$.
Let $N$ be the neighborhood of $S^{\epsilon}$ given by
\begin{equation*}
 N:=\coprod_{i \in I_-} [R_- -\epsilon,  R_- -3\epsilon/8]\times \{a^-_i\} \times L_i^-\coprod_{j\in J_+} [R_++3\epsilon/8,  R_+ +\epsilon]\times \{a^+_j\} \times L_j^+.
\end{equation*}
Then the set $U$ is defined by $U:=V^{\epsilon}\setminus N$.
\end{definition}
For once let us for forget about the orientations of the moduli spaces and assume that $\mathcal{R}=\mathbb{Z}_2$.
With the above definition of a Morse function on the cobordism we know from~\cite{QuantumStructures} that there exists a short exact sequence of chain complexes

\begin{equation}\label{eq:shortexactchain}
\xymatrix{
0 \ar[r] & C_k(U;\tilde{f}|_U,\tilde{J};\mathbb{Z}/2) \ar[r]^j \ar[d]^{d_U} & C_k((V,S);\tilde{f},\tilde{J};\mathbb{Z}/2) \ar[r]^{\delta} \ar[d]^{d_{(V,S)}} & C_{k-1}(S^{\epsilon/2};\tilde{f}|_{S^{\epsilon/2}},J;\mathbb{Z}/2) \ar[d]^{d_{S^{\epsilon/2}}} \ar[r] & 0\\
0 \ar[r] & C_{k-1}(U;\tilde{f}|_U,\tilde{J};\mathbb{Z}/2) \ar[r]^{j} & C_{k-1}((V,S);\tilde{f},\tilde{J};\mathbb{Z}/2) \ar[r]^{\delta} & C_{k-2}(S^{\epsilon/2};\tilde{f}|_{S^{\epsilon/2}},J;\mathbb{Z}/2) \ar[r] & 0 \\
}
\end{equation}
This short exact sequence induces a long exact sequence
\begin{equation*}
\xymatrix{
 \dots \ar[r]^{\delta_*} & QH_*(S) \ar[r]^{i_*} & QH(V)\ar[r]^{j_*}
& QH_*(V,S)\ar[r]^{\delta_*} & QH_{*-1}(S)\ar[r]^{i_*} & \dots
}
\end{equation*}
More generally, if $A$ and $B$ are two subsets of the ends of $V$, such that $A\cap B=\emptyset$, then there exists a long exact sequence
\begin{equation*}
\xymatrix{
 \dots \ar[r]^{\delta_*} & QH_*(A) \ar[r]^{i_*} & QH(V,B)\ar[r]^{j_*}
& QH_*(V,A)\ar[r]^{\delta_*} & QH_{*-1}(A)\ar[r]^{i_*} &\dots
}
\end{equation*}
\begin{remark}
 Notice that the connecting homomorphism in homology coming from the short exact sequence of chain complexes~(\ref{eq:shortexactchain}) gives us the inclusion map $i_*:QH_*(S)  \rightarrow QH(V)$.
In singularhomology the connecting homomorphism is the one between the homologies $H_*(V,S)$ and $H_{*-1}(S)$ and it is often denoted by $\delta$.
However, in the case of quantum homology the map $\delta_*$ is not the connecting homomorphism.
\end{remark}
When working with a general ring $\mathcal{R}$, which has $Char(\mathcal{R}) \neq 2$, it is harder to see that the maps $j$ and $\delta$ above are chain maps. 
Clearly $j$ is a chain map, since $C_k(U;\tilde{f}|_U,\tilde{J})$ is a subcomplex of $C_k((V,S);\tilde{f},\tilde{J})$.
For the map $\delta$ this is less obvious.
On the one side, the chain complex $C(V,S)$ is defined using moduli spaces that are fiber products of stable and unstable manifolds of some Morse function $\tilde{f}$ on $V^{\epsilon}$ and moduli spaces of $\tilde{J}$-holomorphic disks in $(\tilde{M},V)$. 
On the other side, the chain complex $C(S^{\epsilon/2};\tilde{f}|_{S^{\epsilon/2}};J)$ is defined using moduli spaces that are fiber products of stable and unstable manifolds of a Morse function $f$ on $S$ and moduli spaces of $J$-holomorphic disks in $(M,S)$.
To calculate the map $\delta_*$ we need to identify some of these moduli spaces.
However, it is not clear that under these identifications the orientations of the moduli spaces match.
This is the content of the next section.

\subsection{Orientations of the Moduli Spaces on the Boundary of 
\texorpdfstring{$V$}{V}}\label{sec:orientation}
Recall that the orientations of the moduli spaces of pseudo-holomorphic disks are defined using a fixed spin structure on the Lagrangian submanifold.
We use the following definition of a spin structure, which is due to Milnor~\cite{Milnor} and equivalent to the usual definition (for example the definition in~\cite{SpinGeometry}).
\begin{definition}
A spin structure of an oriented vector bundle $E$ over a manifold $X$ is a homotopy class of a trivialization of $E$ over the 1-skeleton of $X$ which can be extended to the 2-skeleton of $X$.
\end{definition}
With this definition of a spin structure on a general vector bundle we can define the spin structure of a manifold as follows.
\begin{definition}
 A spin manifold is a oriented Riemannian manifold with a spin structure on its tangent bundle.
\end{definition}
Given a spin structure on the Lagrangian cobordism $V$ there is a canonical way to define a spin structure on its boundary components.
For a description the reader is referred to~\cite{SpinGeometry}.
Let $\mathcal{M}(\lambda;\tilde{J};(\tilde{M},V))$ be the moduli spaces of $\tilde{J}$-holomorphic disks in $(\tilde{M},V)$ and similarly let $\mathcal{M}(\lambda;\tilde{J};(M,S))$ be the moduli space of $J$-holomorphic disks in $(M,S)$.
For a precise definition we refer the reader to~\cite{McDuffSalamon}.
We endow $L$ with the spin structure induced by the spin structure of $V$. 
This orients the space $\mathcal{M}(\lambda;\tilde{J};(M,S))$.
The following lemma guarantees that the orientations of the moduli spaces $\mathcal{M}(\lambda;\tilde{J};(\tilde{M},V))$ and $\mathcal{M}(\lambda;\tilde{J};(M,S))$ match.
\begin{lemma}\label{lem:orientationmatch}
 The orientation of $\mathcal{M}(\lambda;\tilde{J};(\tilde{M},V))$ restricted to the set of $\tilde{J}$-holomorphic curves contained in $(M,L)$ is the same as the orientation of $\mathcal{M}(\lambda;\tilde{J};(M,S))$.
\end{lemma}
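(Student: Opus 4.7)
The plan is to reduce the statement to a comparison of determinant line bundles of linearized Cauchy--Riemann operators that factor through a product decomposition along the cylindrical end of $V$.

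First I would identify the elements of the two moduli spaces combinatorially. By Lemma~\ref{lem:curve_openmapping}, any $\tilde{J}$-holomorphic disk $u:(D,\partial D)\to(\tilde{M},V)$ with boundary on a component $L\subset \partial V$ and with image meeting the cylindrical end off $K\times M$ satisfies $\pi\circ u\equiv p_{0}\in\mathbb{R}^{2}$, where $p_{0}$ lies on the cylindrical leg attached to $L$. Writing $\tilde{M}=\mathbb{R}^{2}\times M$, such a disk has the form $u=(p_{0},u')$, where $u':(D,\partial D)\to(M,L)$ is a $J$-holomorphic disk in the fiber. This furnishes a canonical bijection between $\mathcal{M}(\lambda;\tilde{J};(\tilde{M},V))$ restricted to curves contained in $(M,L)$ and $\mathcal{M}(\lambda;J;(M,L))$.

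Second, I would examine the linearization of $\bar\partial$ at such a curve. On the cylindrical end we have the canonical splittings $T\tilde{M}=T\mathbb{R}^{2}\oplus TM$, $\tilde{J}=i\oplus J$, and $TV|_{L}=\mathbb{R}\oplus TL$, with the $\mathbb{R}$-summand pointing along the cylinder axis. Pulling back along $u=(p_{0},u')$ yields
\begin{equation*}
u^{*}T\tilde{M}=\underline{\mathbb{C}}\oplus u'^{*}TM,\qquad (u|_{\partial D})^{*}TV=\underline{\mathbb{R}}\oplus (u'|_{\partial D})^{*}TL,
\end{equation*}
and the linearized operator decomposes accordingly as $D_{u}=D_{0}\oplus D_{u'}$, where $D_{0}$ is the standard $\bar\partial$ operator on the trivial bundle $\underline{\mathbb{C}}\to D$ with totally real boundary condition $\underline{\mathbb{R}}\to\partial D$. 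Passing to determinant lines gives an isomorphism $\det(D_{u})\cong\det(D_{0})\otimes\det(D_{u'})$. Since $D_{0}$ is a real Fredholm operator of index one with kernel $\mathbb{R}\cdot 1$ and vanishing cokernel, it has a canonical orientation. Thus orientations of $\det(D_{u'})$ and $\det(D_{u})$ are in canonical bijection, and it remains only to verify that the two spin-structure-induced orientations correspond under this bijection.

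The central, and most delicate, step is precisely this last verification. The orientation on $\det(D_{u})$ is produced from the spin structure on $V$ by homotoping the loop of totally real subspaces $(u|_{\partial D})^{*}TV\subset u^{*}T\tilde{M}$ to a trivialization determined by the spin structure, and analogously for $\det(D_{u'})$ and the spin structure on $L$. Because all relevant bundle data splits off the constant $\underline{\mathbb{R}}$-summand along the cylinder, this reduces to the assertion that the restriction of the spin structure of $V$ to $L$, via the identification $TV|_{L}=\mathbb{R}\oplus TL$, coincides with the exterior tensor product of the canonical spin structure on $\underline{\mathbb{R}}$ (with the outward convention along the cylinder) and the boundary-induced spin structure on $L$. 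This is exactly the definition of the induced spin structure on the boundary of $V$ recalled just before the lemma, so the identification of orientations follows.

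The main obstacle is the last compatibility: a careful bookkeeping of sign conventions, in particular the choice of the outward normal direction along the cylindrical leg, the suspension sign in the determinant line isomorphism induced by the splitting $D_{u}=D_{0}\oplus D_{u'}$, and the orientation on the moduli space obtained from $\det(D_u)$ after quotienting by the automorphisms of $D$ fixing the boundary. Once these conventions are fixed uniformly with the ones already in force for the closed Lagrangian theory of~\cite{LagrTop}, the identification of orientations is forced by the splitting above.
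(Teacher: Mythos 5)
Your proposal follows essentially the same route as the paper's proof: reduce to matching orientations of determinant lines, split the linearized operator $D\overline{\partial}_{\tilde u}$ as $D\overline{\partial}_{u_{\mathbb C}}\oplus D\overline{\partial}_u$ (your $D_0\oplus D_{u'}$) using that $\tilde J = i\oplus J$ and $\pi\circ\tilde u$ is constant, use that the $\underline{\mathbb C}$-factor contributes a canonically oriented determinant line, and then observe that the trivialization of $(\tilde u|_{\partial D})^*TV$ coming from the spin structure on $V$ restricts to the trivialization of $(u|_{\partial D})^*TL$ induced by the boundary spin structure on $L$. Your version is slightly more explicit about the kernel/cokernel of the trivial-bundle operator and about the sign conventions that must be tracked, but the key decomposition and the spin-structure compatibility are the same as in the paper.
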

In order to prove this lemma we need to briefly recall the construction of the orientation of $\mathcal{M}(\lambda;\tilde{J};(\tilde{M},V))$.
Consider the following proposition, which is taken from~\cite{LagrangianIntersectionFloerTheory}.
\begin{proposition}\label{prop:indexbundle}
 Let $E$ be a complex vector bundle over a disk $D^2$. Let $F$ be a totally real subbundle of $E|_{\partial D^2}$ over $\partial D^2$. 
We denote by $\overline{\partial}_{(E,F)}$ the Dolbeault operator on $D^2$ with coefficients in $(E,F)$,
\begin{equation}
 \overline{\partial}_{(E,F)}:W^{(1,p)}(D^2,\partial D^2;E,F)\rightarrow L^{p}(D^2;E).
\end{equation}
Assume $F$ is trivial and take a trivialization of $F$ over $\partial D^2$.
Then the trivialization induces a canonical orientation of the index bundle $Ker \overline{\partial}_{(E,F)}-Coker \overline{\partial}_{(E,F)}$.
\end{proposition}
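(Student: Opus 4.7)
The plan is to reduce the problem to a universal situation via a capping-off construction. Since $D^2$ is contractible, the complex bundle $E$ is trivial; pick any trivialization $\Psi: E \xrightarrow{\cong} D^2 \times \mathbb{C}^n$. Under $\Psi$, the totally real subbundle $F$ together with its given trivialization yields, at each boundary point $z\in \partial D^2$, a specific $\mathbb{R}$-basis $(v_1(z),\ldots,v_n(z))$ of the totally real $n$-plane $F_z\subset \mathbb{C}^n$. This basis provides a continuous identification between the boundary data $(E|_{\partial D^2}, F)$ and the standard pair $(\partial D^2\times \mathbb{C}^n, \partial D^2\times \mathbb{R}^n)$, and we use it to glue $D^2$ carrying $(E,F)$ to a second copy of $D^2$ carrying the standard pair $(\mathbb{C}^n,\mathbb{R}^n)$. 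The result is a complex vector bundle $\hat E$ over the glued sphere $S^2$.

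The orientation of the index bundle is then obtained by comparing determinant lines in the three problems. Over $S^2$ there is no boundary condition, so the Dolbeault operator $\overline{\partial}_{\hat E}$ is complex linear; its kernel and cokernel are complex vector spaces, and $\det \overline{\partial}_{\hat E}$ has a canonical complex orientation. Over the standard disk $(\mathbb{C}^n,\mathbb{R}^n)$, a direct computation using the Schwarz reflection principle gives $\ker \overline{\partial}_{(\mathbb{C}^n,\mathbb{R}^n)} = \mathbb{R}^n$ (the constant real functions) and trivial cokernel, so the standard basis $e_1\wedge\cdots\wedge e_n$ orients $\det \overline{\partial}_{(\mathbb{C}^n,\mathbb{R}^n)}$ canonically. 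The linear gluing theorem for Cauchy--Riemann operators provides a canonical isomorphism
\[
\det \overline{\partial}_{(E,F)}\otimes \det \overline{\partial}_{(\mathbb{C}^n,\mathbb{R}^n)} \;\cong\; \det \overline{\partial}_{\hat E},
\]
and combining the two preferred orientations of the other factors yields the required orientation of $\det \overline{\partial}_{(E,F)}$.

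The main obstacle is establishing independence of the choices, most importantly of the auxiliary trivialization $\Psi$ of $E$. Any two such trivializations differ by a gauge transformation $g:D^2\to GL(n,\mathbb{C})$; since $D^2$ is contractible and $GL(n,\mathbb{C})$ is path connected, $g$ is homotopic through gauge transformations to the identity. The corresponding family of capped-off bundles $\hat E_s$ over $S^2$ gives a continuous family of Fredholm operators, and the homotopy invariance of the determinant line bundle implies that the two constructions of the orientation agree. A parallel argument takes care of the small perturbations needed to smooth out the gluing near the equator of $S^2$. This homotopy invariance of the determinant line bundle, together with the linear gluing isomorphism, is the technical heart of the argument and the main place where subtle sign conventions must be verified.
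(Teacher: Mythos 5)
The paper does not prove this proposition; it is quoted verbatim from Fukaya--Oh--Ohta--Ono's \emph{Lagrangian Intersection Floer Theory}, where it appears (essentially Proposition 8.1.4) with precisely the capping-and-linear-gluing argument you describe: trivialize $E$ over the disk, use the given boundary trivialization of $F$ to glue in a second disk carrying the standard pair $(\mathbb{C}^n,\mathbb{R}^n)$, and orient $\det\overline{\partial}_{(E,F)}$ by comparing with the complex orientation on the resulting sphere and the explicit computation $\ker\overline{\partial}_{(\mathbb{C}^n,\mathbb{R}^n)}=\mathbb{R}^n$, $\operatorname{coker}=0$. Your outline is therefore the standard proof, and your homotopy argument for independence of the auxiliary trivialization $\Psi$ is also the one used there; the sign bookkeeping and the fact that the resulting orientation depends only on the homotopy class of the boundary trivialization of $F$ are, as you note, where the remaining technical work sits.
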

If a Lagrangian submanifold $L\subset M$ is oriented then $(u|_{\partial D^2})^*TL$ is trivial.
Indeed, $TL$ is trivial over the $1$-skeleton of $V$ and by a cellular approximation argument we may assume that $TL$ is trivial over the image of $u|_{\partial D^2}$. 
Then the first Stiefel-Whitney class $w_1(TL)$ vanishes, which implies that $w_1(u^*TL)=u^*(w_1(TL))=w_1(TL)=0$ and hence $(u|_{\partial D^2})^*TL$ is trivial.
Thus we can apply proposition~\ref{prop:indexbundle} to the case $(E,F)=(u^*TM,(u|_{\partial D^2})^*TL)$.
This gives us a pointwise orientation of the index bundle of the Dolbeault operator $\overline{\partial}_{(E,F)}$.
However, this trivialization is not unique, but the choice of a spin structure on $V$ induces a trivialization that is unique up to homotopy.
Since the orientation of the index bundle of $\overline{\partial}_{(E,F)}$ depends only on the trivialization of $(u|_{\partial D^2})^*TL$, it therefore depends only on the spin structure on $L$.
In~\cite{LagrangianIntersectionFloerTheory} it is shown that with a choice of a spin 
structure on $L$ the pointwise
orientation from the proposition~\ref{prop:indexbundle}
can be extended in a unique way to give a consistent orientation of the index bundle.
The orientation of the index bundle then induces an orientation of the determinant bundle 
of the linearized operator
$D\overline{\partial}_u$ of the $\tilde{J}$-holomorphic curve equation.
Recall that the determinant bundle of a Fredholm operator is defined as
\begin{equation*}
 det(D \overline{\partial}_u):=det(Coker D\overline{\partial}_u)^*\otimes det( Ker D 
\overline{\partial}_u).
\end{equation*}

\begin{proof}[Proof of Lemma~\ref{lem:orientationmatch}]
By the construction of the orientation of the moduli spaces in~\cite{LagrangianIntersectionFloerTheory}, it suffices to show that the orientations of the determinant line bundles of the linearized operators match.
Let $\tilde{u}\in \mathcal{M}(\lambda;\tilde{J};(\tilde{M},V))$ be a $\tilde{J}$-holomorphic disk that is mapped to the complement of
$K\times M$. In particular we may assume that $\tilde{J}=i\oplus J$ and $\pi(\tilde{u})$ is a constant.
Then the linearization of the operator $D \overline{\partial}_{\tilde{u}}$ splits into $D
\overline{\partial}_{u_{\mathbb{C}}}\oplus D\overline{\partial}_{u}$, where $u\in \mathcal{M}(\lambda;J;(M,S))$ and $u_{\mathbb{C}}$ is a constant curve in $\mathbb{C}$.
We compute the determinant line bundle:
\begin{equation*}
\begin{array}{ccc}
 det \left( D \overline{\partial}_{\tilde{u}}\right)& = &det\left(Coker D\overline{\partial}_{\tilde{u}}\right)^*\otimes det\left( KerD\overline{\partial}_{\tilde{u}}\right)\\
 &=& det\left(Coker D\overline{\partial}_{u_{\mathbb{C}}}\oplus Coker D\overline{\partial}_u\right)^*\otimes det\left(Ker D\overline{\partial}_{u_{\mathbb{C}}}\oplus Ker D\overline{\partial}_u\right)\\
 &=& det\left( Coker D\overline{\partial}_u\right)^*\otimes det\left(Coker D\overline{\partial}_{u_{\mathbb{C}}}\right)^* \otimes
 det\left(Ker D\overline{\partial}_{u_{\mathbb{C}}}\right)\otimes det\left( Ker D\overline{\partial}_u\right)\\
 &=& det\left(Coker D\overline{\partial}_{u_{\mathbb{C}}}\right)^* \otimes
 det\left(Ker D\overline{\partial}_{u_{\mathbb{C}}}\right)\otimes det\left( Coker D\overline{\partial}_u\right)^*\otimes det\left( Ker D\overline{\partial}_u\right),\\
\end{array}
\end{equation*}
where the last step follows, since $D\overline{\partial}_u$ is surjective and therefore the cokernel is trivial.
Hence we have the identity
\begin{equation*}
 det \left(D \overline{\partial}_{\tilde{u}}\right)=det\left(D\overline{\partial}_{u_{\mathbb{C}}}\right)\otimes det\left(D \overline{\partial}_u\right).
\end{equation*}
Restricting the trivialization of $(\tilde{u}|_{\partial D^2})^*TV$ to a trivialization of $(u|_{\partial D^2})^*TL$, implies that an orientation of the determinant bundle of $D \overline{\partial}_{\tilde{u}}$ induces an orientation of $D \overline{\partial}_u$.
But this trivialization is exactly the trivialization induced by the spin structure on $L$. 
Therefore the two orientations are equivalent.
\end{proof}

Let $e_1:U\rightarrow X$ and $e_2: V\rightarrow X$ be smooth maps between oriented manifolds, that are transverse.
We denote by $U\sideset{{}_{e_1}}{{}_{e_2}}{\mathop{\times}}V:=\{(u,v)\subset U\times V|e_1(u)=e_2(v)\}$ the fiber product of $U$ and $V$ with the orientation induced by the orientations of $U$, $V$ and $X$.

\begin{remark}
Let $e_1:U\rightarrow X$ and $e_2: V\rightarrow X$ be smooth maps and $X$ a submanifold in $Y$. 
Denote by $i:X\rightarrow Y$ the inclusion and write $e_1':U\rightarrow Y$ and $e_2':V \rightarrow Y$ for the compositions $i\circ e_1$ and $i\circ e_2$ respectively.
Then:
\begin{equation}\label{eq:orientationFiberProduct}
 U\sideset{{}_{e_1}}{{}_{e_2}}{\mathop{\times}}V=
U\sideset{{}_{e_1'}}{{}_{e_2'}}{\mathop{\times}}V.
\end{equation}
\end{remark}

\begin{corollary}\label{cor:orientationsmoduli}
There are two ways to orient moduli spaces modeled over planar trees in $\partial V$.
One is induced by the spin structure on $V$ and the other one is induced by the spin structure on $L$.
These two orientations are the same.
\end{corollary}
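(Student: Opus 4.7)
The plan is to reduce Corollary~\ref{cor:orientationsmoduli} to Lemma~\ref{lem:orientationmatch} combined with the compatibility of fiber products under the inclusion $\partial V \hookrightarrow V$. First I would recall the construction of orientations on tree-type moduli spaces from~\cite{LagrTop} and~\cite{QuantumStructures}: such a moduli space is an iterated fiber product of two kinds of factors, namely (i) stable and unstable submanifolds of the Morse functions labeling the edges of the tree, and (ii) moduli spaces of $\tilde J$-holomorphic disks (or spheres) with boundary in the Lagrangian, one per internal vertex. The overall orientation is the induced fiber-product orientation coming from the orientations of these factors, where the disk factors are oriented via the spin structure as described before Lemma~\ref{lem:orientationmatch}.

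Since by assumption the configurations lie entirely in $\partial V$, every disk involved is simultaneously a disk in $(M,L)$ and in $(\tilde M,V)$. Applying Lemma~\ref{lem:orientationmatch} to each disk factor shows that the orientation induced by the spin structure of $V$ agrees with the orientation induced by the restricted spin structure of $L$. For the Morse factors, the function $\tilde f$ has near $\partial V$ the product form $\tilde f=f_i^\pm+\sigma_i^\pm$ used throughout the paper, so for a critical point lying on (a parallel copy of) $\partial V$ the stable and unstable submanifolds of $\tilde f$ split as products of the corresponding stable/unstable submanifolds of $f_i^\pm$ on the $L$-factor and of $\sigma_i^\pm$ on the interval factor. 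Choosing a fixed orientation for the one-dimensional $\sigma_i^\pm$-factor, the product orientation on the $V$-side reduces on the $L$-tangential part to the orientation used in the $L$-side description.

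Finally I would invoke the remark containing equation~(\ref{eq:orientationFiberProduct}): a fiber product is insensitive to whether the evaluation maps are regarded as landing in $L$ or as landing in $V$ through the inclusion $i\colon L\hookrightarrow V$, since $U\sideset{{}_{e_1}}{{}_{e_2}}{\mathop{\times}}V=U\sideset{{}_{i\circ e_1}}{{}_{i\circ e_2}}{\mathop{\times}}V$ as oriented manifolds. This allows me to compare the two iterated fiber products factor by factor; by the preceding two paragraphs the disk factors match by Lemma~\ref{lem:orientationmatch} and the Morse factors match after extracting the universal $\sigma_i^\pm$-orientation. Assembling the identifications yields the equality of the two orientations on the moduli space.

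The main obstacle I anticipate is purely bookkeeping, namely tracking the universal sign contributions from the normal $\sigma_i^\pm$-factor in the Morse direction and from the ordering of the factors in the iterated fiber product, so that they are consistent with the sign conventions of~\cite{QuantumStructures} and~\cite{LagrTop}; once these are in place the corollary follows essentially formally from Lemma~\ref{lem:orientationmatch} and the fiber product remark.
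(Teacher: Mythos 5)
Your proposal follows the same route as the paper: decompose the moduli space as a fiber product, apply Lemma~\ref{lem:orientationmatch} to the disk factors and identity~(\ref{eq:orientationFiberProduct}) to compare the fiber products, and match the Morse factors (you do this via the product form $\tilde f=f_i^\pm+\sigma_i^\pm$, which unpacks the paper's convention $W_f^u(x)=W_{\tilde f}^u(x)\cap L$). Your write-up is more explicit about the Morse-direction bookkeeping but is otherwise the same argument.
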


\begin{proof}
 With the conventions $W_f^u(x)=W_{\tilde{f}}^u(x)\cap L$, identity~(\ref{eq:orientationFiberProduct}) and Lemma~\ref{lem:orientationmatch} the corollary follows.
\end{proof}

\subsection{The Long Exact Sequence}

\begin{proposition}\label{prop:longexact}
For a general ring $\mathcal{R}$, there exists a short exact sequence of chain complexes
\begin{equation}\label{eq:shortexactchain_Z}
\xymatrix{
0 \ar[r] & C_k(U;\tilde{f}|_U,\tilde{J}) \ar[r]^j \ar[d]^{d_U} & C_k((V,S);\tilde{f},\tilde{J}) \ar[r]^{\delta} \ar[d]^{d_{(V,S)}} & C_{k-1}(S^{\epsilon/2};\tilde{f}|_{S^{\epsilon/2}},J) \ar[d]^{d_{S^{\epsilon/2}}} \ar[r] & 0\\
0 \ar[r] & C_{k-1}(U;\tilde{f}|_U,\tilde{J}) \ar[r]^{j} & C_{k-1}((V,S);\tilde{f},\tilde{J}) \ar[r]^{\delta} & C_{k-2}(S^{\epsilon/2};\tilde{f}|_{S^{\epsilon/2}},J) \ar[r] & 0 \\
}
\end{equation}
which induces the long exact sequence in homology
\begin{equation*}
\xymatrix{
 \dots \ar[r]^{\delta_*} & QH_*(S) \ar[r]^{i_*} & QH(V) \ar[r]^{j_*}
& QH_*(V,S) \ar[r]^{\delta_*} & QH_{*-1}(S)\ar[r]^{i_*} &\dots
}.
\end{equation*}
\end{proposition}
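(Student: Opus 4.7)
The plan is to produce the short exact sequence of chain complexes explicitly and then invoke the zig-zag lemma. On generators, I will define $j$ as the inclusion of $\mathrm{Crit}(\tilde f|_U)$ into $\mathrm{Crit}(\tilde f)$, and $\delta$ as the map sending a critical point of $\tilde f$ lying in $S^{\epsilon/2}$, i.e.\ of the form $(R_- - \epsilon/2, a_i^-, p)$ for $i\in I_-$ or $(R_+ + \epsilon/2, a_j^+, p)$ for $j\in J_+$, to the Lagrangian factor $p$, and killing all other generators. The index shift by one built into $\delta$ reflects the fact that on the ends in $S^\epsilon$ the function $\tilde f$ splits as $\sigma_i^\pm + f_i^\pm$, with $\sigma_i^\pm$ contributing a unique critical point of index $1$; so the short exact sequence of graded $\Lambda$-modules is tautological, and the proposition reduces to checking that $j$ and $\delta$ are chain maps. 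The $\mathbb{Z}/2$-version of this check is already established via~(\ref{eq:shortexactchain}) following~\cite{QuantumStructures}, so for general $\mathcal R$ the only new content is a sign-preserving identification of moduli spaces.

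For the chain-map property of $j$, I will use that $-\nabla\tilde f$ points strictly into $U$ along the entirety of $\partial U$: along $\partial V^\epsilon\setminus S^\epsilon$ this is built into the definition of $\tilde f$, and along the internal cuts $t=R_- - 3\epsilon/8$ and $t=R_+ + 3\epsilon/8$ the slices sit past the local maxima of the corresponding $\sigma_i^\pm$, so $-\nabla\sigma_i^\pm$ points back into $U$ there. Hence negative gradient trajectories starting at a critical point of $U$ cannot exit $U$, and Lemma~\ref{lem:curve_openmapping} forces every $\tilde J$-holomorphic disk appearing in a pearly trajectory to have image either in a single $\pi$-fiber or in $K\times M$; in either case a disk attached to a flow line emanating from $U$ must have its boundary inside $U$. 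This gives $d_{(V,S)}\circ j = j\circ d_U$, so $C(U;\tilde f|_U,\tilde J)$ sits inside $C((V,S);\tilde f,\tilde J)$ as a subcomplex.

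The chain-map property of $\delta$ is the technical heart of the argument. Fix a critical point $x=(R_- - \epsilon/2, a_i^-, p)$ with $i\in I_-$; the positive-end case is entirely symmetric. The claim I plan to establish is that the zero-dimensional pearly moduli spaces from $x$ to another critical point $y=(R_- - \epsilon/2, a_{i'}^-, q)$ in $S^{\epsilon/2}$ are empty when $i\neq i'$ and, for $i=i'$, stand in orientation-preserving bijection with the pearly moduli spaces in $L_i^-$ from $p$ to $q$. The strategy is to trace the configuration backwards from $y$ using the product splitting on the cylindrical end: since $(\tilde f,\tilde\rho,\tilde J)$ splits there as $(\sigma_i^- + f_i^-,\, dt^2\oplus\rho_{L_i^-},\, i\oplus J)$ and $R_- - \epsilon/2$ is the unique, index-$1$ (local-maximum) critical point of $\sigma_i^-$, the restriction of $W^s_{\tilde f}(y)$ to the end equals $\{R_- - \epsilon/2\}\times\{a_i^-\}\times W^s_{f_i^-}(q)$. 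A sign analysis of the $t$-component of $-\nabla\tilde f$ at $t=R_-$ on ends in $S^\epsilon$---where $\sigma_i^-$ is decreasing, so this component is strictly positive---shows that flow lines exit such ends into the bulk rather than enter them, whence $W^s_{\tilde f}(y)$ has no contribution from outside the slice. Lemma~\ref{lem:curve_openmapping} combined with $(R_- - \epsilon/2, a_i^-)\notin K$ then forbids any disk attached at a point of the slice from having image in $K\times M$; hence every disk has constant projection $(R_- - \epsilon/2, a_i^-)$, and by Lemma~\ref{automatictransversality} is identified transversally with a $J$-holomorphic disk in $(M, L_i^-)$. The interspersed Morse flow lines collapse to $-\nabla f_i^-$-flow lines on the slice. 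This yields the claimed emptiness for $i\neq i'$ and the set-theoretic bijection for $i=i'$; the signs match by Corollary~\ref{cor:orientationsmoduli}, which states precisely that the orientations induced on boundary moduli spaces from the spin structure of $V$ and from the restricted spin structure on $L_i^-$ agree. Hence $d_S\circ\delta = \delta\circ d_{(V,S)}$.

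Once $(j,\delta)$ is a short exact sequence of chain complexes, the zig-zag lemma produces the long exact sequence. I will identify $H_*(C(U;\tilde f|_U,\tilde J))$ with $QH_*(V)$ via Proposition~\ref{prop:LQHforLC} applied with empty exit region, and $H_*(C(S^{\epsilon/2};\tilde f|_{S^{\epsilon/2}},J))$ with $QH_*(S)$ by observing that $\tilde f|_{S^{\epsilon/2}}$ differs from $\coprod_i f_i^-\sqcup\coprod_j f_j^+$ only by additive constants and that $\tilde J$ restricts to $J$ on each component by the split assumption, so the two pearl complexes literally coincide. The degree shift built into $\delta$ places the connecting homomorphism as a degree-preserving map $QH_*(S)\to QH_*(V)$, which is the map $i_*$. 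The main obstacle is the sign-preserving moduli identification at the $\delta$-step; everything else is either formal or inherited from the $\mathbb{Z}/2$ case.
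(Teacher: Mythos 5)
Your proof takes the same route as the paper: define $j$ as inclusion and $\delta$ as restriction, observe that the set-theoretic moduli identifications needed for $\delta$ to be a chain map are already in place from the $\mathbb{Z}/2$ version, reduce the remaining issue to a sign comparison handled by Corollary~\ref{cor:orientationsmoduli}, and invoke the zig-zag lemma. The paper compresses all of this into a short paragraph that defers to "the discussion above" (Section 9.1 on orientations, in particular Lemma~\ref{lem:orientationmatch} and Corollary~\ref{cor:orientationsmoduli}); your version spells out the backward trace from $W^s_{\tilde f}(y)$ and the end-splitting argument in detail, but the logical skeleton and the key lemmas cited are the same.
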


\begin{proof}[Proof of Proposition~\ref{prop:longexact}]
Let $\tilde{f}$ be a Morse function adapted to the exit region $S^{\epsilon}$ of $V^{\epsilon}$. 
We may assume that $K\times M\subset U$.
The map $j$ in~(\ref{eq:shortexactchain_Z}) is given by the inclusion and $\delta$ is the restriction map.
By the discussion above, the map $\delta$ is also a chain map.
Hence all the maps in~(\ref{eq:shortexactchain_Z}) are chain maps.
This induces the long exact sequence in homology.
\end{proof}

In this section we summarize some properties of the maps in the long exact sequence in Proposition~\ref{prop:longexact}, which is the content of Theorem~\ref{thm:longexact}.
First of all we define a Morse function, which is a small modification of Definition~\ref{def:fadaptedS}.
The functions $\sigma_j^+$ and $\sigma_i^-$ at the cylindrical ends of the cobordism corresponding to the set $S^{\epsilon}$ are defined with two instead of one critical point.

\begin{definition}\label{def:specialMF}
Let $\tilde{f}$ be a Morse function on $V^{\epsilon}$ as in Definition~\ref{def:fadaptedS}, with the only difference that $\sigma_j^+$ and $\sigma_i^-$ satisfy
\begin{enumerate}
  \item [(i)] $\sigma_j^+(t)$ is a non-constant linear function for $t \in [R_+ + \frac{3\epsilon}{4},R_+ + \epsilon]$.
$\sigma_j^+(t)$ is decreasing in this interval if $j\in J_+$ and increasing if $j\in \{1,\dots,k_+\}\setminus J_+$.
Furthermore if $j\in J_+$ then $\sigma_j^+(t)$ has a critical point $t_{j,1}^+:=R_+ + \frac{\epsilon}{2}$ of index $1$, and a critical point $t_{j,0}^+:=R_+ + \frac{\epsilon}{4}$ of index $0$.
If $j\in\{1,\dots,k_+\}\setminus J_+$ then $\sigma_j^+$ has a exactly one critical point $R_+ + \frac{\epsilon}{2}$, which is of index zero.
  \item [(ii)] $\sigma_i^-(t)$ is a non-constant linear function for $t \in [R_- -\epsilon, R_- -\frac{3\epsilon}{4}]$.
$\sigma_i^-(t)$ is increasing in this interval if $i\in I_-$ and decreasing if $i\in \{1\dots,k_-\}\setminus I_-$.
Furthermore if $i\in I_-$ then $\sigma_i^-(t)$ has a critical point $t_{i,1}^-:=R_- - \frac{\epsilon}{2}$ of index $1$ and a critical point $t_{i,0}^-:=R_- -\frac{\epsilon}{4}$ of index $0$. 
If $i\in \{1\dots,k_-\}\setminus I_-$ then $\sigma_i$ has a exactly one critical point $R_- - \frac{\epsilon}{2}$ of index $0$.
\end{enumerate}
Now we define the sets $N$, $U$ and $K$ similarly as in the proof of Proposition~\ref{prop:longexact}.
\end{definition}

Throughout the rest of this section we will use the following notation.
On a cylindrical end $[R_- -\epsilon, R_-]\times \{a_i^-\}\times L_i^-$ the critical points are of the form $(t_{i,1}^+,a_i^-,p)$ or $(t_{i,0}^-,a_i^-,p)$, where $p$ is a critical point of $f_i^-$.
Similarly on a cylindrical end $[R_+, R_+ +\epsilon]\times \{a_j^+\}\times L_j^+$ the critical points are of the form $(t_{j,1}^+,a_j^+,p)$ or $(t_{j,0}^+,a_j^+,p)$, where $p$ is a critical point of $f_j^+$.
If we do not want to specify the connected component of $S$, we will sometimes write $(t_{\iota,0},a_\iota,p)$ and $(t_{\iota,1},a_{\iota},p)$, as well as $f_{\iota}$ and $L_{\iota}$ for $\iota\in I_- \cup J_+$.
The functions $\sigma_i^-$ and $\sigma_j^+$ are illustrated in figure~\ref{sigma_2}.
Notice that $S^{\epsilon/4}\subset U$.
\begin{figure}
 \centering
\includegraphics{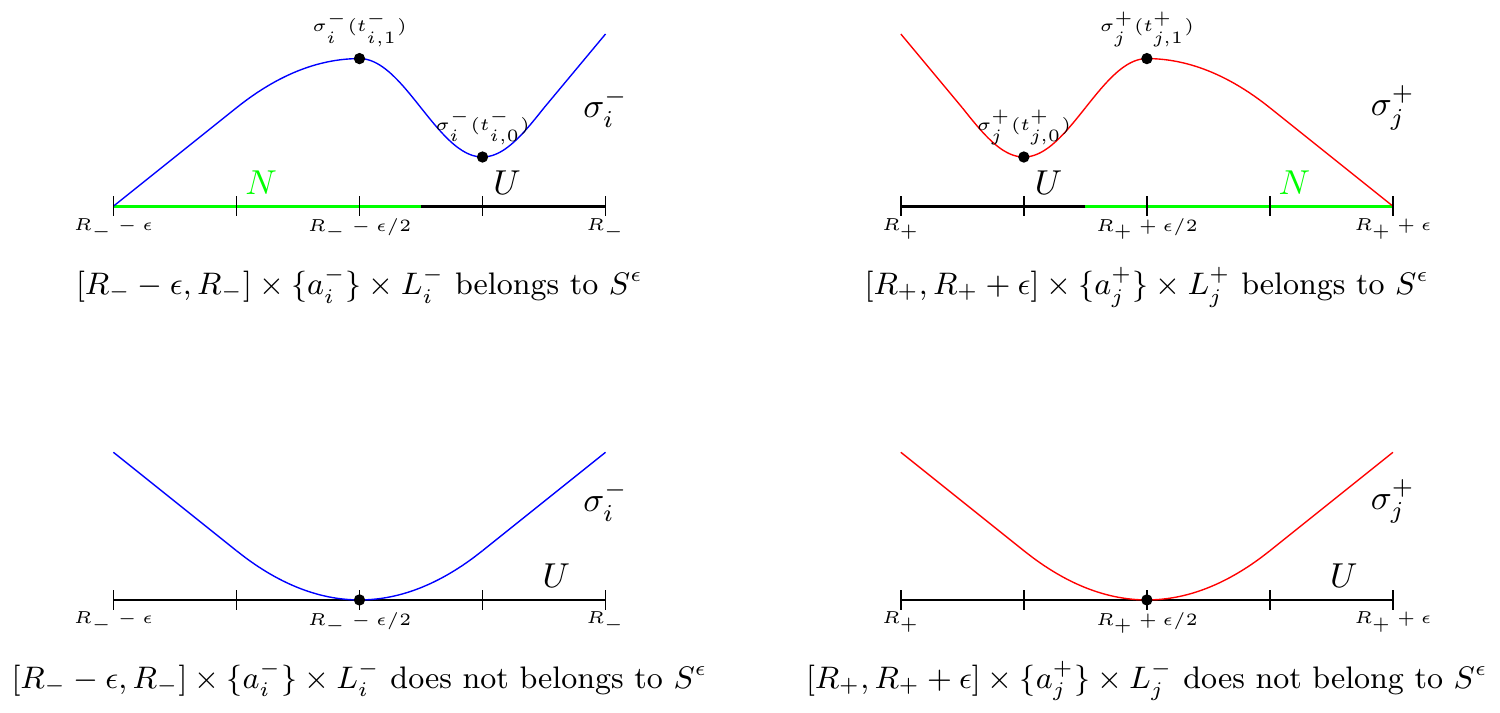}
\caption{The functions $\sigma_i^-$ and $\sigma_j^+$.}\label{sigma_2}
 \end{figure}
Consider the two sets $Crit(\tilde{f})\cap S^{\epsilon/2}$ and $Crit(\tilde{f})\cap S^{\epsilon/4}$, where $\tilde{f}$ is a function as it is given in Definition~\ref{def:specialMF}.
Then there is a natural identification between these two sets with a shift in degree by one and given by $(t_{\iota,1},a_{\iota},p) \mapsto (t_{{\iota},0},a_{\iota},p)$.
Note that the index of $(t_{{\iota},1},a_{\iota},p)$ is $|p|+1$ and the index of $(t_{{\iota},0},a_{\iota},p)$ is $|p|$.

\begin{lemma}\label{lem:sumofunits}
Suppose that $S=\partial V$. Then $\delta_*(e_{(V,\partial V)})= \oplus_{i} e_{L_i^-} \oplus_{j} e_{L_j^+}$.
\end{lemma}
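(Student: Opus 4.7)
The plan is to represent the unit $e_{(V,\partial V)}$ by the Morse-theoretic relative fundamental class of $(V^\epsilon,\partial V^\epsilon)$ computed in a Morse function of the form described in Definition~\ref{def:specialMF}, and then apply $\delta$ to this representative directly. Take $\tilde f$ adapted to the exit region $S^\epsilon=\partial V^\epsilon$ as in Definition~\ref{def:specialMF}, and choose each $f_\iota:L_\iota\to\mathbb{R}$ on the ends to have a unique maximum $m_\iota$. The critical points of $\tilde f$ of Morse index $n+1$ lying on $S^{\epsilon/2}$ are then exactly the points $(t_{\iota,1},a_\iota,m_\iota)$, one per boundary component. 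My candidate cycle is
\[
 c:=\sum_{x\in\mathrm{Crit}_{n+1}(\tilde f)}\mathrm{sgn}(x)\,x,
\]
the Morse-theoretic relative fundamental class, with signs coming from the orientation induced by the fixed spin structure on $V$ via Section~\ref{sec:orientation}.

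First I would verify that $c$ is a cycle for the full quantum differential $d$. This is automatic: $\dim V^\epsilon=n+1$, so there are no generators of Morse index above $n+1$, and any contribution of a non-constant disk with $\mu(\lambda)=kN_V\ge 2$ applied to a top-index critical point would land in Morse index $|x|-1+kN_V>n+1$, hence in an empty set. Thus on top-index chains the quantum differential reduces to the Morse differential, for which $c$ is by construction a cycle, and its class lives in the top summand of $QH_{n+1}(V,\partial V)$ corresponding to the classical relative fundamental class.

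Next I would identify $[c]$ with $e_{(V,\partial V)}$ by adapting the proof of Lemma~\ref{lem:unit_rel}. Pick auxiliary Morse functions $\tilde f',\tilde f''$ in general position with $\tilde f$, all adapted to $\partial V$, with $\tilde f'=\tilde f''$. For a chain $y$ supported on non-critical points of $\tilde f$, each such $y$ lies on a unique negative-gradient trajectory of $\tilde f$, whose $\alpha$-limit is a single local maximum $x^*\in\mathrm{Crit}_{n+1}(\tilde f)$; hence in $\sum_x\mathrm{sgn}(x)\,x*y$ only the term $x=x^*$ contributes the identity coefficient of $y$, and with the orientations guaranteed by Corollary~\ref{cor:orientationsmoduli} this equals $+y$. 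Quantum corrections with $\mu(\lambda)>0$ produce terms of strictly higher Morse index and can be absorbed into a boundary exactly as in~\cite{QuantumStructures}. This establishes $[c]=e_{(V,\partial V)}$.

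Finally, applying the chain-level restriction $\delta$ of~(\ref{eq:shortexactchain_Z}) to $c$ kills every generator lying in $U$ --- in particular every interior top-index critical point --- and sends each boundary generator $(t_{\iota,1},a_\iota,m_\iota)$ to the critical point $m_\iota$ on $L_\iota$. Therefore $\delta(c)=\sum_\iota m_\iota$, and Lemma~\ref{lem:unit_rel} applied to each closed Lagrangian $L_\iota$ identifies $m_\iota$ with $e_{L_\iota}$, yielding $\delta_*(e_{(V,\partial V)})=\bigoplus_i e_{L_i^-}\oplus\bigoplus_j e_{L_j^+}$. The main obstacle will be the orientation bookkeeping: one needs to verify, using Section~\ref{sec:orientation} and Corollary~\ref{cor:orientationsmoduli}, that every top-index critical point appears with sign $+1$ in $c$ when oriented compatibly with $V$, and that the restriction correspondence $(t_{\iota,1},a_\iota,m_\iota)\mapsto m_\iota$ respects the spin-orientations used to define each $e_{L_\iota}$ in $QH(L_\iota)$.
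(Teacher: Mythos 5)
Your argument is correct and follows essentially the same route as the paper: both choose the Morse function of Definition~\ref{def:specialMF}, represent $e_{(V,\partial V)}$ by the top-degree cycle supported on the local maxima $m$ and $\tilde{m}_{\iota,1}=(t_{\iota,1},a_\iota,m_\iota)$, observe that quantum corrections cannot lower a top-index generator, and then apply the chain-level restriction $\delta$, which kills the interior maximum and sends each $\tilde{m}_{\iota,1}$ to $m_\iota$. The only (minor) presentational difference is that you invoke the Morse-theoretic relative fundamental class and re-run the argument of Lemma~\ref{lem:unit_rel} to recognize it as the unit, whereas the paper verifies the same fact by direct computation of $d(m)$ and $d(\tilde{m}_{\iota,1})$; in both versions the remaining substance is exactly the orientation bookkeeping from Section~\ref{sec:orientation} that you flag at the end.
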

\begin{proof}
We choose a Morse function $\tilde{f}$ as in Definition~\ref{def:specialMF} with $S=\partial V$.
Then $-\nabla \tilde{f}$ is still transverse to $V^{\epsilon}$ and points outside along $\partial V^{\epsilon}$.
Suppose $\tilde{f}$ has a unique maximum $m$ inside the set $V^{\epsilon}|_K$, which is 
possible by Proposition~\ref{lem:singlemax_rel}.
In addition assume that each $f_i^{-}$ and $f_j^{+}$ are Morse functions on $L_i^{-}$ and $L_j^{-}$, both with unique maxima $m_i^{-}$ and $m_j^{+}$ respectively.
Then 
$$\tilde{m}_{i,1}^{-}:=(t_{i,1}^-,a_i^-,m_i^{-}) \text{ and } \tilde{m}_{j,1}^{+}:=(t_{j,1}^+,a_j^+,m_j^{+})$$
 are also maxima of $\tilde{f}$.
We want to show that 
$$\sum\limits_i \tilde{m}_{i,1}^{-} +\sum\limits_j \tilde{m}_{j,1}^{+}-m$$
 is a cycle and hence 
$$[\sum\limits_i \tilde{m}_{i,1}^{-} + \sum\limits_j \tilde{m}_{j,1}^{+}]=[m]\in QH_*(V,\partial V).$$
Fix a critical point $\tilde{m}_{\iota,1}=(t_{\iota,1},a_{\iota},m_{\iota})\in\{ \tilde{m}_{i,1}^{-} , \tilde{m}_{j,1}^{+}\}_{i,j}$ and compute $d(\tilde{m}_{\iota,1})$.
By the choice of the $\sigma_j^+$ and $\sigma_i^-$ we know that any pearly trajectory from a critical point in $Crit(\tilde{f})\cap S^{\epsilon/2}$ cannot get into the set $V|_{[R_-, R_+]\times \mathbb{R}}$.
Hence it has to end at a point in $Crit(\tilde{f})\cap ({S^{\epsilon/2}}\cup S^{\epsilon/4})$.
Moreover, $m_{\iota}$ is a cycle in the complex $C(S^{\epsilon/2};\tilde{f}|_{S^{\epsilon/2}},J)$, as we saw in the proof of Lemma~\ref{lem:unit_rel}.
Therefore it suffices to consider pearly trajectories in $0$-dimensional moduli spaces from $\tilde{m}_{\iota,1}\in Crit(\tilde{f})\cap S^{\epsilon/2}$ to a critical point $x:=(t_{\iota,0},a_{\iota},p)\in Crit(\tilde{f})\cap S^{\epsilon/4}$.
Then we have 
$$\delta_{prl}(\tilde{m}_{\iota},x,\lambda;\tilde{f})=|\tilde{m}_{\iota}|-|x|+\mu(\lambda)-1=0.$$
Since outside of the set $K\times M$ the Morse function $\tilde{f}$ is the sum of $f_k$ and $\sigma_k$, we can project to the last factor to get a pearly trajectory of $f_{\iota}$ in $L_{\iota}$, going from $m_{\iota}$ to $p$.
Assume $p\neq m_{\iota}$.
Computing the dimension of moduli space of the projected pearly trajectory, we get 
$$\delta_{prl}(m_{\iota},p,\lambda;f_{\iota})=|m_{\iota}|-|p|+\mu(\lambda)-1=(|\tilde{m}_{\iota}|-1)-|x|+\mu(\lambda)-1=-1,$$
 which is a contradiction.
Thus, $d(\tilde{m}_{{\iota},1})=\tilde{m}_{{\iota},0}$ and therefore 
$$d(\sum\limits_i \tilde{m}_{i,1}^{-} + \sum\limits_j \tilde{m}_{j,1}^{+})= \sum\limits_i \tilde{m}_{i,0}^{-} +\sum\limits_j \tilde{m}_{j,0}^{+}.$$

It is left to show that $d(m)=\sum\limits_i {\tilde{m}_{i,0}^{-}} + \sum\limits_j {\tilde{m}_{j,0}^{+}}$.
A computation shows that for $\mu(\lambda)\neq0$ 
$$\delta_{prl}(m,z,\lambda;\tilde{f})=(n+1)-|z|+\mu(\lambda)-1\geq (n+1)-n+2>0.$$
Thus, $d(m)$ is equal to the Morse part of the differential, and hence $|z|=n$.
At each critical point of index $n$ two trajectories of the negative gradient of $\tilde{f}$ end.
If $z$ is one of the critical points of index $n$ on $S^{\epsilon/4}$, then one negative gradient is coming from $\tilde{m}_{\iota}$ for some $\tilde{m}_{\iota}\in \{\tilde{m}_{i,1}^{-},\tilde{m}_{j,1}^{+}\}_{i,j}$, and the other one therefore has to come from $m$, because it is the only maximum within $V|_{[R_-, R_+]\times \mathbb{R}}$ and $-\nabla \tilde{f}$ is transverse to $\partial V|_{[R_-, R_+]\times \mathbb{R}}$ and pointing outside.
Clearly the moduli spaces $\mathcal{P}_{prl}(m,\tilde{m}_{{\iota},0};0)$ and $\mathcal{P}_{prl}(\tilde{m}_{\iota},\tilde{m}_{{\iota},0};0)$ must have opposite orientations.
If $z$ is contained in $V|_{[R_-, R_+]\times \mathbb{R}}$, then there are two flow lines from $m$ to $z$, counted with opposite signs.
This proves that 
$$d(m)=\sum\limits_i \tilde{m}_{i,0}^{-} + \sum\limits_j \tilde{m}_{j,0}^{+}= d(\sum\limits_i \tilde{m}_{i,1}^{-} +\sum\limits_j \tilde{m}_{j,1}^{+})$$
 and thus 
$$e_{(V,\partial V)}=[m]=[\sum\limits_i \tilde{m}_{i,1}^{-} + \sum\limits_j \tilde{m}_{j,1}^{+}].$$
Now it is easy to see that 
$$\delta_*(e_{(V,\partial V)})=\delta_*([\sum\limits_i \tilde{m}_{i,0}^{-} + \sum\limits_j \tilde{m}_{j,0}^{+}])=\sum\limits_i [m_i^{-}] + \sum\limits_j [m_j^{+}]= \oplus_{i} e_{L_i^-}\oplus_{j} e_{L_j^+}.$$
This proves the lemma.
\end{proof}

\begin{lemma}\label{lem:deltamultiplicative}
 The map $\delta_*$ in Proposition~\ref{prop:longexact} is multiplicative with respect to the quantum product $*$.
In fact, the chain map $\delta$ satisfies $\delta(x*y)=\delta(x)*\delta(y)$.
\end{lemma}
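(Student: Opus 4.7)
The plan is to prove the stronger chain-level identity $\delta(x*y)=\delta(x)*\delta(y)$ by identifying the relevant moduli spaces via projection to the Lagrangian ends. I will fix three Morse functions $\tilde{f},\tilde{f}',\tilde{f}''$ on $V^\epsilon$ adapted to the exit region $S^\epsilon$ as in Definition~\ref{def:specialMF}, in general position, together with an almost complex structure of split form $\tilde{J}=i\oplus J$ on the cylindrical ends. Then the quantum product on $QH(V,S)$ is computed on the chain level with this data, while the product on $QH(S)=\bigoplus_\iota QH(L_\iota)$ is computed using the restrictions to $S^{\epsilon/2}\cong\coprod_\iota L_\iota$, which differ from $f_\iota$ only by an additive constant. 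Under the obvious identification, $\delta$ sends a generator $(t_{\iota,1},a_\iota,p)\in S^{\epsilon/2}$ to $p\in\mathrm{Crit}(f_\iota)$, with a degree shift by one.

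The central structural claim is that any figure-$Y$ pearly trajectory $(\mathbf{u},\mathbf{u}',\mathbf{u}'',v)\in\mathcal{P}_{prod}(x,y,z,\lambda)$ whose exit $z=(t_{\iota,1},a_\iota,r)$ lies in $S^{\epsilon/2}$ is entirely contained in the single fiber $\{(t_{\iota,1},a_\iota)\}\times M$. The argument is inductive, working backward from $z$ along $\mathbf{u}''$: since $\sigma_\iota$ has a local maximum at $t_{\iota,1}$, its stable manifold under $-\nabla\sigma_\iota$ is the singleton $\{t_{\iota,1}\}$, so $u_l''(1)\in W^s(z)$ has $t$-coordinate equal to $t_{\iota,1}$, and Lemma~\ref{lem:curve_openmapping} places $u_l''$ in the fiber $\{(t_{\iota,1},a_\iota)\}\times M$. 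The preceding Morse segment from $u_{l-1}''(1)$ to $u_l''(-1)$ takes finite positive time, but the flow of $-\nabla\sigma_\iota$ can reach the critical point $t_{\iota,1}$ only asymptotically, so the starting $t$-coordinate must already equal $t_{\iota,1}$, and Lemma~\ref{lem:curve_openmapping} again places $u_{l-1}''$ in the same fiber. Iterating confines all of $\mathbf{u}''$ and the core $v$ to that fiber; applying the same argument to $\mathbf{u}$ and $\mathbf{u}'$ forces $x=(t_{\iota,1},a_\iota,p)$ and $y=(t_{\iota,1},a_\iota,q)$ for some $p,q\in\mathrm{Crit}(f_\iota)$ on the same end $\iota$ as $z$.

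The remainder is case analysis. Whenever $x$ or $y$ fails to have the form $(t_{\iota,1},a_\iota,\cdot)$ for the same end $\iota$ as $z$, the moduli space $\mathcal{P}_{prod}(x,y,z,\lambda)$ is empty, so $\delta(x*y)$ receives no contribution; in those same cases $\delta(x)*\delta(y)$ also vanishes, either because one factor is already zero or because the product on the direct sum $\bigoplus_\iota QH(L_\iota)$ is diagonal in $\iota$. When $x,y,z$ all lie on the same end $\iota$ at $t=t_{\iota,1}$, projection to $M$ defines a bijection
\begin{equation*}
\mathcal{P}_{prod}(x,y,z,\lambda)\;\cong\;\mathcal{P}_{prod}^{L_\iota}(p,q,r,\lambda),
\end{equation*}
since along the cylindrical end the Morse flow is the product of $-\nabla f_\iota$ with the constant flow at $t_{\iota,1}$, and the split structure $i\oplus J$ identifies $\tilde{J}$-holomorphic disks with boundary on $V$ contained in that fiber with $J$-holomorphic disks with boundary on $L_\iota$; the inverse is the unique constant-$t_{\iota,1}$ lift. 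The orientations match by Corollary~\ref{cor:orientationsmoduli}, so the signed counts agree. Comparing coefficients generator by generator then yields $\delta(x*y)=\delta(x)*\delta(y)$ on chains, hence also on homology.

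The main obstacle is the fiber-confinement argument of the second paragraph, which rests on the interplay between the open mapping principle for $\tilde{J}$-holomorphic curves near the cylindrical ends and the fact that $-\nabla\sigma_\iota$ approaches its maximum only asymptotically; once that confinement is established, everything else is routine bookkeeping.
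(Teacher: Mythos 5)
Your proof is correct and follows essentially the same route as the paper's: restrict to a Morse function adapted to the exit region, use Lemma~\ref{lem:curve_openmapping} and the structure of $-\nabla\tilde{f}$ near the exit ends to show that any figure-$Y$ pearly trajectory with exit point in $S^{\epsilon/2}$ is entirely confined to one fiber $\{(t_{\iota,1},a_\iota)\}\times M$, identify the resulting moduli space with the corresponding one on $L_\iota$, and match orientations via Lemma~\ref{lem:orientationmatch}/Corollary~\ref{cor:orientationsmoduli}. You give a more explicit inductive version of the confinement argument (backward from $z$, using that finite-time Morse segments cannot reach the critical level $t_{\iota,1}$), which is exactly what the paper summarizes as ``no flow line of $-\nabla\tilde{f}$ can go from $U$ to $S^{\epsilon/2}$.''
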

\begin{proof}
 Recall that the long exact sequence in Proposition~\ref{prop:longexact} comes from the short exact sequence
\begin{equation*}
\xymatrix{
 0 \ar[r] & C(U;\tilde{f}|_U,\tilde{J})\ar[r]^{j} &
C((V,S);\tilde{f},\tilde{J})\ar[r]^{\delta} &
C(S^{\epsilon/2};\tilde{f}|_{S^{\epsilon/2}},J)\ar[r] & 0. }
\end{equation*}
The map $\delta:C((V,S);\tilde{f},\tilde{J})\rightarrow C(S^{\epsilon/2};\tilde{f}|_{S^{\epsilon/2}},J)$ is given by restricting the critical points of $C((V,S);\tilde{f},\tilde{J})$ to the critical points in $C(S^{\epsilon/2};\tilde{f}|_{S^{\epsilon/2}},J)$.
Recall that $x * y:= \smashoperator{\sum\limits_{\delta_{prod}(x,y,z,\lambda)=0}} \sharp \mathcal{P}_{prod}(x,y,z,\lambda)  zt^{\overline{\mu}(\lambda)}$.
Note that for any point $z\in C(S^{\epsilon/2};\tilde{f}|_{S^{\epsilon/2}},J)$ the space $\mathcal{P}_{prod}(x,y,z,\lambda)$ is non-empty if and only if $x,y\in C((V,S);\tilde{f},\tilde{J})\cap S^{\epsilon/2}$.
Indeed, the $\tilde{J}$-holomorphic curves which are not completely contained in $U$ have to be constant under $\pi$ by Lemma~\ref{lem:curve_openmapping}.
No flow line of $-\nabla \tilde{f}$ can go from $U$ to $S^{\epsilon/2}$ by the construction of $\tilde{f}$.
By Lemma~\ref{lem:orientationmatch} we conclude that also the signs of $\mathcal{P}_{prod}(x,y,z,\lambda;\tilde{f})$ and $\mathcal{P}_{prod}(\delta(x),\delta(y),\delta(z),\lambda;\tilde{f}|_{S^{\epsilon/2}}))$ match and thus we have $\delta(x* y)= \delta(x)*\delta(y)$.
\end{proof}

\begin{lemma}\label{lem:prodandi}
 The product $*$ on $QH_*(\tilde{V})$ is trivial on the image of the inclusion $i_*$.
In other words, for any two elements $a$ and $b$ in $QH_*(S)$ we have that $i_*(a)*i_*(b)=0$.
\end{lemma}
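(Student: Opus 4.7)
The plan is to choose Morse data on $V^\epsilon$ adapted to $S$ in such a way that the chain--level figure-$Y$ moduli spaces computing $[\alpha] *_V [\beta]$ are empty for purely geometric reasons; the vanishing then descends to homology to give $i_*(a) * i_*(b) = 0$.

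First I realize $i_*$ at the chain level. Choose an adapted Morse function $\tilde f_1$ on $V^\epsilon$ as in Definition~\ref{def:specialMF}, so that on each cylindrical end $\iota \in S$ one has $\tilde f_1 = \sigma_1 + f_{\iota,1}$ with $\sigma_1$ having a unique maximum $t^1_{\iota,1}$ (index $1$) and unique minimum $t^1_{\iota,0}$ (index $0$). A cycle $a\in C(S)$ decomposes as a combination of $\tilde p_1$-type critical points $(t^1_{\iota,1},a_\iota,p)$; lift it to $\tilde a_1\in C(V,S;\tilde f_1)$ and set $\alpha := d_{V,S}(\tilde a_1)$. Since $d_S(a)=0$, the chain $\alpha$ lies in the subcomplex $C(V;\tilde f_1|_U,\tilde J)\cong C(V)$ and represents $i_*[a]$ by the construction of the connecting homomorphism in Proposition~\ref{prop:longexact}. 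A direct analysis of $-\nabla \tilde f_1$ on the cylindrical end, combined with Lemma~\ref{lem:curve_openmapping}, shows that every pearly trajectory from a $\tilde p_1$-type critical point ends at a $\tilde p_0$- or $\tilde p_1$-type critical point on the same end; after cancelling the $\tilde p_1$-type contributions (they sum to the vanishing $d_S a$), $\alpha$ is supported on the $\tilde p_0$-type critical points $(t^1_{\iota,0},a_\iota,q)$. Analogously, using a second adapted Morse function $\tilde f_2$ whose $\sigma_2$-minima $t^2_{\iota,0}$ are chosen generically with $t^2_{\iota,0} \neq t^1_{\iota,0}$ for every $\iota$, one obtains a representative $\beta\in C(V;\tilde f_2|_U,\tilde J)$ of $i_*[b]$ supported on the $\tilde p_0$-type critical points of $\tilde f_2$, which sit at different $t$-coordinates than those of $\alpha$.

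Second, I compute $[\alpha]*_V[\beta]$ using a third adapted Morse function $\tilde f_3$ with $\sigma_3$-minima $t^3_{\iota,0}$ distinct from both $t^1_{\iota,0}$ and $t^2_{\iota,0}$, and analyze the moduli spaces $\mathcal P_{prod}(\tilde p_0^a,\tilde p_0^b,\tilde z,\lambda;\tilde f_1|_U,\tilde f_2|_U,\tilde f_3|_U,\tilde J)$ with $\tilde p_0^a=(t^1_{\iota,0},a_\iota,p)$ and $\tilde p_0^b=(t^2_{\iota',0},a_{\iota'},p')$. Because $t^1_{\iota,0}$ is a minimum of $\sigma_1$, its $\sigma_1$-unstable manifold is a point, so any pearly trajectory emanating from $\tilde p_0^a$ begins with a $\tilde J$-holomorphic disk attached at $\tilde p_0^a$; by Lemma~\ref{lem:curve_openmapping}, since $\tilde p_0^a$ lies outside $K\times M$, this disk has constant $\mathbb{R}^2$-projection and sits in the fiber $F_a := \{(t^1_{\iota,0},a_\iota)\}\times M$ with boundary in $L_\iota$. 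The subsequent Morse flow has vanishing $\sigma_1$-component at $t=t^1_{\iota,0}$, so preserves $F_a$, and any further bubble is again forced into $F_a$ by the open-mapping lemma. Iterating, the entire incoming pearly trajectory from $\tilde p_0^a$ lies in $F_a$; analogously the one from $\tilde p_0^b$ lies in $F_b:=\{(t^2_{\iota',0},a_{\iota'})\}\times M$. Applying Lemma~\ref{lem:curve_openmapping} once more to the core disk, which carries marked points in both $F_a$ and $F_b$ lying outside $K\times M$, shows that this disk itself has constant projection and lies in a single fiber that must coincide with both $F_a$ and $F_b$. This forces $a_\iota=a_{\iota'}$ (hence $\iota=\iota'$) and then $t^1_{\iota,0}=t^2_{\iota,0}$, contradicting the generic choice. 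Therefore the moduli space is empty, $\alpha*_V\beta=0$ at the chain level, and $i_*(a)*i_*(b)=0$ in $QH_*(V)$, independently of the data by the usual invariance argument.

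The main obstacle is the inductive argument in the previous paragraph: showing rigorously that a pearly trajectory emanating from a $\tilde p_0$-type critical point never leaves the corresponding fiber, despite potentially containing many disks and long Morse segments. The argument proceeds by induction on the number of bubbles, at each step combining Lemma~\ref{automatictransversality} with Lemma~\ref{lem:curve_openmapping} and the vanishing of $-\nabla\sigma_i$ at the $\sigma_i$-critical $t$-value; the transversality and open-mapping results must both apply to every bubble, not merely to the first. A secondary verification is that $\alpha=d_{V,S}(\tilde a_1)$ really realizes the connecting homomorphism $i_*[a]$ in the sense of Proposition~\ref{prop:longexact}, which follows directly from the definition of the connecting map of the short exact sequence of chain complexes, and that the choice of three Morse functions with pairwise distinct $\sigma$-minima is compatible with the generic transversality conditions used in Lemma~\ref{lem:prod}.
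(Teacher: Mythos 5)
Your proof is correct and follows essentially the same route as the paper: realize $i_*$ via the connecting homomorphism as inclusion of the $\tilde p_0$-type critical points, perturb the $\sigma$-minima of the Morse functions so that the input pearly trajectories are trapped in distinct fibers of $\pi$ (the paper does this with $\tilde f = \tilde f''$ and a single perturbed $\tilde f'$; you use three pairwise-distinct minima, which is a surface-level variation), and conclude via Lemma~\ref{lem:curve_openmapping} that the core disk, having constant projection, cannot bridge the two fibers, so the figure-$Y$ moduli spaces are empty. One small imprecision: the unstable manifold of $\tilde p_0^a=(t^1_{\iota,0},a_\iota,p)$ is $\{(t^1_{\iota,0},a_\iota)\}\times W^u_{f_{\iota,1}}(p)$, not a single point unless $|p|=0$, so the first disk need not be attached at $\tilde p_0^a$ itself --- but it is still attached somewhere in the fiber $F_a$, which is all the argument uses.
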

\begin{proof}
 Recall that the map $i$ is the connecting homomorphism of the following short exact sequence
\begin{equation*}
\xymatrix{
0 \ar[r] & C_k(U;\tilde{f}|_U,\tilde{J};\mathcal{R}) \ar[r]^j \ar[d]^{d_U} & C_k((V,S);\tilde{f},\tilde{J};\mathcal{R}) \ar[r]^{\delta} \ar[d]^{d_{(V,S)}} & C_{k-1}(S^{\epsilon/2};\tilde{f}|_{S^{\epsilon/2}},J;\mathcal{R}) \ar[d]^{d_{S^{\epsilon/2}}} \ar[r] & 0\\
0 \ar[r] & C_{k-1}(U;\tilde{f}|_U,\tilde{J};\mathcal{R}) \ar[r]^{j} & C_{k-1}((V,S);\tilde{f},\tilde{J};\mathcal{R}) \ar[r]^{\delta} & C_{k-2}(S^{\epsilon/2};\tilde{f}|_{S^{\epsilon/2}},J;\mathcal{R}) \ar[r] & 0 \\
}
\end{equation*}
We choose a Morse function $\tilde{f}$ as in Definition~\ref{def:specialMF}.
In particular $-\nabla \tilde{f}$ is transverse to $\partial V|_{[R_-, R_+]\times \mathbb{R}}$ and points outside along $\partial V|_{[R_-, R_+]\times \mathbb{R}}$.
Let $p$ be a cycle in $C_{k-1}(S^{\epsilon/2};\tilde{f}|_{S^{\epsilon/2}},J)$.
We consider the critical point $x_{{\iota},1}:=(t_{{\iota},1},a_{\iota},p)\in Crit(\tilde{f})\cap S^{\epsilon/2}$ of the whole function $\tilde{f}$.
As in the proof of Lemma~\ref{lem:sumofunits} we can show that $d(x_{{\iota},1})=x_{{\iota},0}$, where $x_{{\iota},0}:=(t_{{\iota},0},a_{\iota},p)\in Crit (\tilde{f})\cap S_{\frac{\epsilon}{4}}$.
Moreover, $x_{{\iota},0}$ is a cycle for the complex $C((V,S);\tilde{f},\tilde{J})$.
Indeed, the critical point $p$ is a cycle of $f_k$ and $t_{{\iota},0}$ is a minimum of the function $\sigma_{\iota}$.
By the definition of the connecting homomorphism it follows that $i(p)=x_{{\iota},0}$ and thus $i$ can be seen as the inclusion of the critical points of $C_{k-1}(S^{\epsilon/2};\tilde{f}|_{S^{\epsilon/2}})$ into $C(U;\tilde{f}|_U,\tilde{J})$.

It is left to show that the product on the chain level for $C(U;\tilde{f}|_U,\tilde{J})$ is zero for any two critical points in $S^{\epsilon/4}$. The product is defined using three Morse functions $\tilde{f}$, $\tilde{f}'$ and $\tilde{f}''$ on $U$ in general position.
In particular the stable and unstable manifolds of $\tilde{f}$ and $\tilde{f}'$ have to be transverse.
We may choose $\tilde{f}=\tilde{f}''$.
We assume that $\tilde{f}$ is a function as in Definition~\ref{def:specialMF}.
Then we can choose $\tilde{f}'$ such that the functions $f_i^{-}$, ${f'}_i^{-}$ and $f_j^+$, ${f'}_j^{+}$ are transverse respectively. 
However for $i\in I_-$ and $j\in J_+$ the critical points of $\sigma^-_i$ and $\sigma^+_j$ are slightly perturbed to obtain ${\sigma'}^-_i$ and ${\sigma'}^+_j$.
Assume that ${\sigma'}^+_j$ still has a unique maximum at $R_+ +\epsilon/2$ but the unique minimum is perturbed to lie at $R_+ +\epsilon/8$.
Similarly, ${\sigma'}^-_i$ has a unique maximum at $R_- - \epsilon/2$ and a unique minimum at $R_- - \epsilon/8$.
If $i\notin I_-$ and $j\notin J_+$ we just assume that $\sigma^-_i$, ${\sigma'}^-_i$ and ${\sigma}^+_j$, ${\sigma'}^+_j$ are transverse.
Let $x$ be a critical point in $C(U;\tilde{f}|_U,\tilde{J})$ that lies on $S^{\epsilon/4}$ and $y$ a critical point in $C(U;\tilde{f'}|_U,\tilde{J})$ that lies in $S^{\epsilon/8}$.
Any trajectory of $-\nabla \tilde{f}'$ starting at a critical point in $S^{\epsilon/8}$ stays in $S^{\epsilon/8}$, by the choice of $\sigma_i'$ and $\sigma_j'$. Similarly, $-\nabla \tilde{f}$ has to stay in $S^{\epsilon/4}$.
By Lemma~\ref{lem:curve_openmapping} all the $\tilde{J}$-holomophic disks involved in a figure-$Y$ pearly trajectory have to map to a constant point under $\pi$.
Thus, the moduli space of figure-$Y$ pearly trajectories starting at $x$ and $y$ is empty and therefore their product is zero.
\end{proof}

\begin{lemma}\label{lem:jandprod}
We have the following relation
\begin{equation*}
 j_*(x*y)=j_*(x)*j_*(y).
\end{equation*}

\end{lemma}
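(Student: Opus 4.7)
The plan is to compute both products via the same Morse data. I would choose three Morse functions $\tilde{f}, \tilde{f}', \tilde{f}''$ on $V^{\epsilon}$ all of the form of Definition~\ref{def:specialMF} (adapted to exit region $S^{\epsilon}$) and in general position, so that the product on $C((V,S);\tilde{f},\tilde{J})$ is transversely cut. The critical points of each of these functions lying in $U$ then generate the subcomplex $C(U;\tilde{f}|_U,\tilde{J})$ (whose homology is $QH_*(V)$), and the map $j$ is just the chain-level inclusion. Hence it suffices to check, on chains, that for $x\in Crit(\tilde{f})\cap U$ and $y\in Crit(\tilde{f}')\cap U$ one has $x*_{(V,S)} y = x*_U y$; passing to homology then yields the desired identity $j_*(x*y) = j_*(x)*j_*(y)$.

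For this chain-level identity the key step is to show that every figure-$Y$ pearly trajectory $(\mathbf{u},\mathbf{u}',\mathbf{u}'',v)$ in a zero-dimensional moduli space $\mathcal{P}_{prod}(x,y,z,\lambda;\tilde{f},\tilde{f}',\tilde{f}'')$ with $x,y\in U$ is entirely contained in $U$; in particular $z\in U$. By Lemma~\ref{lem:curve_openmapping}, every $\tilde{J}$-holomorphic disk in the trajectory, including the core $v$, either maps into $K\times M\subset U$ or has constant $\pi$-projection, in which case its image lies in a single fiber that is entirely in $U$ or entirely in $N$. By the choice of $\sigma_{\iota}$, $-\nabla\tilde{f}$ is strictly inward pointing along all of $\partial U$: on $\partial V^{\epsilon}\setminus S^{\epsilon}$ by the adapted condition, and on the interface slices $\{R_{\pm}\mp 3\epsilon/8\}\times\{a_{\iota}\}\times L_{\iota}$ because those slices lie strictly between the max $t_{\iota,1}$ and the min $t_{\iota,0}$ of $\sigma_{\iota}$, where $-\nabla\sigma_{\iota}$ points towards $t_{\iota,0}\in U$. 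Consequently no Morse segment of the trajectory can exit $U$, and no holomorphic disk can span across the interface. An induction along the pearly trajectory, starting from $x$ and $y$, then forces every disk and Morse segment, the core $v$, and the output trajectory down to $z$ to lie in $U$.

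With the containment established the oriented moduli spaces $\mathcal{P}_{prod}(x,y,z,\lambda)$ used to compute $x*_{(V,S)} y$ and $x*_U y$ coincide setwise and with matching orientations (the latter because both orientations are induced by the spin structure on $V$, which restricts to that of $U$). Therefore the two products agree on chains and hence in homology, giving $j_*(x*y)=j_*(x)*j_*(y)$. The main obstacle is the inductive containment argument above: one must verify carefully that the combination of inward-pointing gradient at the $U$/$N$ interface with the open mapping theorem really does prevent any portion of a pearly trajectory from crossing into $N$. Once this is clear, the identification of the moduli spaces is immediate.
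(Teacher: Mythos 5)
Your proposal is correct and follows essentially the same approach as the paper's own (very terse) proof, which simply observes that $j$ is the chain-level inclusion, that figure-$Y$ trajectories in $C(U;\tilde{f}|_U,\tilde{J})$ are also trajectories in $C((V,S);\tilde{f},\tilde{J})$, and that by the construction of the Morse function there is no figure-$Y$ trajectory starting at two generators of the subcomplex $C(U)$ and ending outside it. You have supplied the details behind that last assertion: the open-mapping lemma confines each disk (including the core) either to $K\times M\subset U$ or to a single $\pi$-fiber, and since $-\nabla\tilde{f}$ points into $U$ at the interface slices, an induction along the three branches of the tree keeps the entire configuration in $U$. The only minor deviation is cosmetic: you cite Definition~\ref{def:specialMF} while the paper's proof refers to Definition~\ref{def:fadaptedS}; both Morse models satisfy the inward-pointing property at $\partial U$ away from $S^{\epsilon}$, so the argument goes through either way. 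You do not explicitly address how to keep the three functions of the prescribed form in general position (the paper's treatment of the analogous point appears in the proof of Lemma~\ref{lem:prodandi}, where the $\sigma$'s are perturbed while preserving their qualitative shape), but this is a standard technicality and does not affect the substance of the argument.
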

\begin{proof}
 The map $j:C(U;\tilde{f}|_U, \tilde{J})\rightarrow C((V,S);\tilde{f},\tilde{J})$ on the chain level is given by the inclusion. 
Any figure-$Y$ pearly trajectory in $C(U,\tilde{f}|_U, \tilde{J})$ is also a trajectory in $C((V,S);\tilde{f},\tilde{J})$. 
By the construction of the function $\tilde{f}$ in Definition~\ref{def:fadaptedS}, there exists no figure-$Y$ pearly trajectory starting at two generators in $C(U;\tilde{f}|_U, \tilde{J})$ and ending at a generator in $C((V,S);\tilde{f},\tilde{J})\setminus C(U;\tilde{f}|_U, \tilde{J})$.
\end{proof}

\begin{lemma}\label{lem:inc&deltacommute}
 The diagram
\begin{equation*}
\xymatrix{
  C_i(V, \partial V) \ar[r]^(0.45){i_{(V, \partial V)}} \ar[d]^{\delta}& C_{i-1}(\tilde{M}_T, \partial \tilde{M}_T) \ar[d]^{\delta}\\
 C_i(\partial V) \ar[r]^{i_{\partial V}} & C_{i-1}(\partial \tilde{M}_T)\\
}
\end{equation*}
is commutative, where $i_{\partial V}:=\oplus i_{L_i^+}\oplus i_{L_j^-}$.
\end{lemma}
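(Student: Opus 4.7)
The plan is to prove the commutativity on the chain level by choosing Morse functions adapted to both sides simultaneously and then exhibiting an orientation-preserving bijection between the relevant zero-dimensional moduli spaces. First I would pick a Morse function $\tilde{f}$ on $V^{\epsilon}$ as in Definition~\ref{def:specialMF} with $S=\partial V$, so that near each cylindrical end $\tilde{f}$ splits as $f_{\iota}+\sigma_{\iota}$ with critical points $t_{\iota,1}$ (index $1$) and $t_{\iota,0}$ (index $0$). I would pick $\tilde{g}$ on $\tilde{M}_T$ analogously, splitting near $\partial\tilde{M}_T$ as $g_{\iota}+\tau_{\iota}$ on each slab $\{a_{\iota}\}\times M$, so that the critical points of $\tilde{g}$ on $\partial\tilde{M}_T^{\epsilon/2}$ are of the form $(\tau_{\iota,1},a_{\iota},q)$ with $q\in\mathrm{Crit}(g_{\iota})$. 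With these choices the map $\delta$ on each side reads off the critical points sitting in the appropriate $\epsilon/2$-neighborhood of the boundary: on the left $\delta((t_{\iota,1},a_{\iota},p))=p\in C_{i-1}(L_{\iota})$ and zero on all other generators, and similarly on the right.

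Next I would analyse $i_{(V,\partial V)}(x)$ for an arbitrary generator $x\in\mathrm{Crit}(\tilde{f})$. By definition it is a sum over zero-dimensional moduli spaces $\mathcal{P}_{inc}(x,a,\lambda;\tilde{f},\tilde{g},\tilde{J})$, and then $\delta$ retains only those terms with $a$ lying in the boundary neighborhood of $\tilde{M}_T$. If $x$ is an interior critical point, I would argue that no such configuration exists: any trajectory of $-\nabla\tilde{f}$ starting at $x$ stays in the interior by construction of $\tilde{f}$, and by Lemma~\ref{lem:curve_openmapping} every pseudo-holomorphic disk meeting a fiber near $\partial\tilde{M}_T$ is $\pi$-constant; hence the configuration cannot terminate at a boundary critical point of $\tilde{g}$. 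Therefore $\delta\circ i_{(V,\partial V)}(x)=0=i_{\partial V}\circ\delta(x)$ for interior $x$.

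The main content is the boundary case $x=(t_{\iota,1},a_{\iota},p)$ with $p\in\mathrm{Crit}(f_{\iota})$. Here I would use Lemma~\ref{lem:curve_openmapping} once more: because $-\nabla\tilde{f}$ is tangent to $\{a_{\iota}\}\times L_{\iota}$ on the cylindrical end and $\tilde{J}=i\oplus J$ there, every zero-dimensional element of $\mathcal{P}_{inc}(x,a,\lambda;\tilde{f},\tilde{g},\tilde{J})$ whose output $a$ lies in $\partial\tilde{M}_T^{\epsilon/2}$ must be entirely contained in the fiber $\{a_{\iota}\}\times M$. Projecting to $M$ produces precisely an element of $\mathcal{P}_{inc}(p,q,\lambda;f_{\iota},g_{\iota},J)$ computing $i_{L_{\iota}}(p)$, and conversely any such element lifts to one in $\mathcal{P}_{inc}(x,a,\lambda)$ once one uses the product form of $\tilde{f}$ and $\tilde{g}$ and the uniqueness of the flow line of $-\nabla\sigma_{\iota}$ through a generic point. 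The dimension count $\delta_{inc}$ matches after the degree shift, and Corollary~\ref{cor:orientationsmoduli} together with Lemma~\ref{lem:orientationmatch} guarantees that the signs agree under this bijection. Hence $\delta\circ i_{(V,\partial V)}(x)=i_{L_{\iota}}(p)=i_{\partial V}\circ\delta(x)$.

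The step I expect to be the main obstacle is the bookkeeping of signs: one has to check that the identification of $\mathcal{P}_{inc}$-configurations with those of $\mathcal{P}_{inc}^{L_{\iota}}$ does not introduce an extra sign coming from the suspension by the Morse-theoretic data of $\sigma_{\iota}$ and $\tau_{\iota}$. This will reduce to comparing the orientations of the fiber products $W^u(t_{\iota,1})\times W^u(p)$ with $W^u(p)$, together with the matching of spin-induced orientations on the disk moduli spaces from Lemma~\ref{lem:orientationmatch}; once that is verified, chain-level commutativity, and thus the commutativity of the displayed diagram, follows.
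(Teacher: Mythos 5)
Your proposal follows essentially the same strategy as the paper: pick Morse functions that split near the boundary, use Lemma~\ref{lem:curve_openmapping} to force pseudo-holomorphic disks near $\partial V$ to be $\pi$-constant, establish a bijection between zero-dimensional $\mathcal{P}_{inc}$-moduli spaces over $V$ and over the ends $L_\iota$, and invoke Lemma~\ref{lem:orientationmatch} and Corollary~\ref{cor:orientationsmoduli} to match orientations. In fact you supply considerably more detail than the paper, which simply refers to the ``analogous argument as in the previous lemma.''

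There is, however, one geometric imprecision worth flagging. You set up $\tilde{g}$ on $\tilde{M}_T$ as ``splitting near $\partial\tilde{M}_T$ as $g_{\iota}+\tau_{\iota}$ on each slab $\{a_{\iota}\}\times M$'' and then claim the configuration ``must be entirely contained in the fiber $\{a_{\iota}\}\times M$.'' But $\partial\tilde{M}_T$ is an entire strip $\{R_{\pm}\mp\epsilon\}\times(-c,c)\times M$, not a union of slabs indexed by the ends $a_\iota$ of $V$, and the paper places the $\tau$-critical points at $(R_-\!-\!\epsilon,0)$ and $(R_+\!+\!\epsilon,0)$, not at $(R_{\pm}\mp\epsilon,a_\iota)$. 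With the paper's choice, the final Morse trajectory of $-\nabla\tilde{g}$ from $v(0)$ (sitting over $(t,a_\iota)$) to $a$ (sitting over $(R_{\pm}\mp\epsilon,0)$) necessarily moves in the $y$-direction of $\mathbb{R}^2$, so the configuration is not literally confined to one fiber. What actually underlies the bijection is that $\tilde{g}=\tau+f$ is split, hence the $\mathbb{R}^2$- and $M$-components of the trajectory decouple, and the $\tau$-flow contributes a unique (or zero) factor once $\pi(v(0))$ is fixed. You should either state this separability argument explicitly (the pearly part lies in the end, the core disk is $\pi$-constant, and the remaining $\tilde{g}$-flow factors into a $\tau$-flow times an $f$-flow) or, alternatively, make an explicit and different choice of $\tau$ with critical points over $(R_\pm\mp\epsilon/2,a_\iota)$ to make the fiber-wise picture literal, in which case you deviate from the paper's setup and must account for the extra critical points of $\tau$. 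As written, the ``entirely in the fiber'' claim conflates these two options; this is a fixable imprecision rather than a gap in the method, but it should be tightened before the sign bookkeeping in your last paragraph can be carried out unambiguously.
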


\begin{proof}[Proof of lemma~\ref{lem:inc&deltacommute}]
We choose a Morse function $f$ on $M$ and a special Morse function $\tau$ on $\mathbb{R}^2$ with with the property that $\tilde{g}:=\tau+ f$ fulfills the necessary requirements to define the quantum homology of $\tilde{M}_T$ (see section~\ref{sec:ModuleStructure}).
Outside of the compact set $K\subset \mathbb{R}^2$ the function $\tau$ has two critical points $(R_- - \epsilon,0)$ and $(R_+ + \epsilon,0)$ of index $1$.
We may assume that the unstable manifolds of $(R_- - \epsilon,0)$ and $(R_+ + \epsilon,0)$ are vertical, such that all together the negative gradient of $\tau$ points outside at the boundary of $T$ and inside at its complement.
Then the map $\delta: C_i(\tilde{M}_T, \partial \tilde{M}_T) \rightarrow C_{i-1}(\partial \tilde{M}_T)$ is given by the restriction of the generators of $\tilde{g}$ to the generators in the fiber of $(R_- - \epsilon,0)$ and $(R_+ + \epsilon,0)$ under the projection $\pi$.
For any $x\in Crit(\tilde{f})\setminus Ker(\delta)$ and any $a\in Crit(\tilde{g})\setminus Ker(\delta)$ we have a bijection between the elements in the space $\mathcal{P}_{inc}(\delta(x),\delta(a),\lambda;\tilde{f},\tilde{g})$ and $\mathcal{P}_{inc}(x,a,\lambda;\tilde{f}|_{S^{\epsilon/2}},\tilde{g}|_{(R_- - \epsilon,0)\cap (R_+ + \epsilon,0)})$.
This follows by an analogous argument as in the previous lemma.
\end{proof}

\subsection{Module Structures in the Long Exact Sequence}

Once again consider the long exact sequence
\begin{equation*}
\xymatrix{
 \dots \ar[r]^{\delta_*} & QH_*(\partial V) \ar[r]^{i_*} & QH_*(V)\ar[r]^{j_*} & QH_*(V,\partial V) \ar[r]^{\delta_*} & QH_{*-1}(\partial V) \ar[r]^{i_*} &\dots
}
\end{equation*}
Notice that each of the quantum homologies in this sequence is a module over some ambient quantum homology. 
We would like to examine the compatibility of the quantum products with these module structures.
Consider the sequence
\begin{equation*}
\xymatrix{
 \dots \ar[r] & QH_*(M) \ar[r]^(0.4){\Phi} & QH_{*+2}(\tilde{M}_R,\partial \tilde{M}_R) \ar[r]^{ id} & QH_{*+2}(\tilde{M}_R,\partial \tilde{M}_R) \ar[r]^(0.6){\Phi^{-1}} & QH_*(M) \ar[r] & \cdots \\
}
\end{equation*}
where $\Phi$ is the ring isomorphism $\Phi:QH_*(M) \rightarrow QH_{*+2}(\tilde{M}_R,\partial \tilde{M}_R) : b \mapsto R\times b$.
Then we have the following lemma:
\begin{lemma}
 The module structures on $QH_*(\partial V)$,  $QH_*(V)$ and $QH_*(V,\partial V)$ over the rings $QH_*(M)$, $QH_{*+2}(\tilde{M}_R,\partial 
\tilde{M}_R)$ and $QH_{*+2}(\tilde{M}_R,\partial \tilde{M}_R)$ respectively fulfill the following identities:
\begin{enumerate}
 \item[(i)] $i_*(a \ast x)= \Phi(a) \ast i_*(x)$, for every $x\in QH(\partial V)$ and every $a\in QH(M)$.
 \item[(ii)] $j_*(a\ast x)=a\ast j_*(x)$ for every $x\in QH(V)$ and every $a\in QH(\tilde{M}_R,\partial \tilde{M}_R)$.
 \item[(iii)] $\delta_*(a\ast x)=\Phi^{-1}(a)\ast \delta_*(x)$, for every $x\in QH(V,\partial V)$ and every $a\in QH(\tilde{M}_R,\partial \tilde{M}_R)$.
\end{enumerate}
i.e. the maps in the long exact sequence are module maps.
\end{lemma}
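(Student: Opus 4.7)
The plan is to prove $(ii)$ and $(iii)$ directly via chain-level bijections of moduli spaces, and then to deduce $(i)$ formally from them by observing that these two identities promote the short exact sequence from Proposition~\ref{prop:longexact} to a short exact sequence of $QH_*(M)$-modules (where $QH_*(M)$ acts on the outer terms via pullback along the ring isomorphism $\Phi$). Throughout, I would choose a Morse function $\tilde{f}$ on $V^\epsilon$ as in Definition~\ref{def:specialMF}, a product Morse function $\tilde{h} = \tau_R + f_M$ on $\tilde{M}_R$ with $\tau_R$ having a unique index-$2$ critical point $\xi_R$ inside $R$, and an almost complex structure $\tilde{J} = i \oplus J$ near the cylindrical ends of $V$ and near $\partial \tilde{M}_R$, so that Lemma~\ref{lem:curve_openmapping} and Lemma~\ref{lem:traj_awayfromboundary_moregeneral} can be invoked.

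For $(ii)$, the chain-level map $j$ is the inclusion of the subcomplex generated by critical points of $\tilde{f}$ lying in $U$. For such a critical point $x$ and any critical point $a$ of $\tilde{h}$, I would argue that every element of $\mathcal{P}_{mod}(x,y,a,\lambda;\tilde{h},\tilde{f})$ has $y \in U$: Morse trajectories cannot cross $\partial U$ because $-\nabla \tilde{f}|_U$ points inward there, and by Lemma~\ref{lem:curve_openmapping} any disk touching the cylindrical region $N = V^\epsilon \setminus U$ is $\pi$-constant and hence confined to a single fiber, so it cannot bridge across $\partial U$. Thus the moduli spaces computing $j(a \ast x)$ and $a \ast j(x)$ coincide, proving the identity on the chain level.

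For $(iii)$, the chain-level map $\delta$ is the projection onto critical points lying in $S^{\epsilon/2}$. For $x \in C((V, \partial V);\tilde{f},\tilde{J})$, I would examine the contribution to $\delta(a \ast x)$ from $y = (t_{\iota,1}, a_\iota, q) \in S^{\epsilon/2}$. The stable manifold of such a $y$ is contained in the fiber $\{t_{\iota,1}\} \times \{a_\iota\} \times L_\iota$, because $t_{\iota,1}$ is a local maximum of $\sigma_\iota$ (its stable set in the $t$-direction is just $\{t_{\iota,1}\}$); combined with Lemma~\ref{lem:curve_openmapping} forcing disks reaching the cylindrical region to be $\pi$-constant, the whole pearly trajectory must lie in this fiber, which in particular forces $x = (t_{\iota,1}, a_\iota, p) \in S^{\epsilon/2}$ as well. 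The marked-point condition $u_k(0) \in W^u_a(\tilde{h}) = W^u_{\xi_R}(\tau_R) \times W^u_b(f_M)$ reduces, since $\pi(u_k(0)) = (t_{\iota,1}, a_\iota) \in W^u_{\xi_R}(\tau_R) = \mathrm{int}(R)$, to $u_k(0) \in \{(t_{\iota,1}, a_\iota)\} \times W^u_b(f_M)$, matching the $M$-side constraint. Using $\tilde{J} = i \oplus J$ in the fiber I would set up a bijection
\begin{equation*}
\mathcal{P}_{mod}^V\bigl((t_{\iota,1}, a_\iota, p), (t_{\iota,1}, a_\iota, q), R \times b, \lambda;\tilde{h},\tilde{f}\bigr) \longleftrightarrow \mathcal{P}_{mod}^M(p,q,b,\lambda;f_M,f_\iota),
\end{equation*}
with orientations matching by Corollary~\ref{cor:orientationsmoduli} and Lemma~\ref{lem:orientationmatch}, giving $\delta(a \ast x) = \Phi^{-1}(a) \ast \delta(x)$.

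Finally, $(i)$ follows formally. Once $j$ and $\delta$ are $QH_*(\tilde{M}_R,\partial \tilde{M}_R)$-linear on the chain level in the sense of $(ii)$ and $(iii)$, pulling back along $\Phi$ makes the short exact sequence of Proposition~\ref{prop:longexact} into a short exact sequence of $QH_*(M)$-modules, so its connecting homomorphism $i_*$ is automatically $QH_*(M)$-linear; translating back to the $QH_*(\tilde{M}_R,\partial \tilde{M}_R)$-action gives $i_*(a \ast x) = \Phi(a) \ast i_*(x)$. The hard part will be the rigorous verification of orientation-matching in $(iii)$, since the fiber-product description of the moduli spaces mixes unstable manifolds in the $t$-direction (for $\sigma_\iota$) with the $\mathbb{R}^2$-direction (for $\tau_R$) and with the spin-structure-induced orientations of the disk moduli spaces; Corollary~\ref{cor:orientationsmoduli} handles the disk contribution, and the product-Morse splitting $W^u_{(\xi, p)}(\tau + f) = W^u_\xi(\tau) \times W^u_p(f)$ controls the rest, but the $2$-dimensionality of $W^u_{\xi_R}(\tau_R)$ is precisely what produces the degree shift in $\Phi$ and requires careful sign bookkeeping.
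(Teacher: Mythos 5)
Your proof is correct, and its treatments of $(ii)$ and $(iii)$ follow essentially the same route as the paper: confine Morse trajectories by the inward/outward pointing of $-\nabla\tilde{f}$ at $\partial U$, force disks touching the cylindrical region to be $\pi$-constant by Lemma~\ref{lem:curve_openmapping}, and match orientations by Lemma~\ref{lem:orientationmatch} and Corollary~\ref{cor:orientationsmoduli}. Where you genuinely diverge is part $(i)$: the paper proves it directly by recalling from the proof of Lemma~\ref{lem:prodandi} that, for the special Morse function of Definition~\ref{def:specialMF}, the connecting homomorphism $i$ sends a cycle $p$ to the index-$0$ critical point $(t_{\iota,0},a_\iota,p)$, and then establishing a moduli-space bijection between $\mathcal{P}_{mod}(x,y,a,\lambda;\tilde{f}|_{\partial V^{\epsilon/2}},h)$ and $\mathcal{P}_{mod}((t_{\iota,0},a_\iota,x),z,(\xi,a),\lambda;\tilde{f},\tilde{h})$ just as in parts $(ii)$, $(iii)$. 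You instead deduce $(i)$ formally from the chain-level versions of $(ii)$ and $(iii)$: since the chain-level module actions are chain maps and $j$, $\delta$ intertwine them (via $\Phi$), the standard diagram chase shows that the connecting homomorphism of the short exact sequence in Proposition~\ref{prop:longexact} is automatically a module map, and since that connecting homomorphism is precisely $i_*$, identity $(i)$ follows. Your argument is cleaner in that it avoids re-deriving the chain-level description of $i$ and comparing a third family of moduli spaces, while the paper's direct comparison makes the geometric mechanism explicit and is perhaps more robust to sign-convention bookkeeping, a point you rightly flag as the delicate remaining verification (the Koszul sign $(-1)^{|a|}$ appearing when one commutes $d$ past $a\ast(-)$ in the diagram chase must be absorbed into the conventions; the direct moduli comparison sidesteps this). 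Either route is a valid proof.
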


\begin{proof}
\begin{enumerate}
 \item[(i)]We choose a Morse function $\tilde{f}$ on $V$ as in Definition~\ref{def:specialMF} and a Morse function $\tilde{h}:=\tau_R + h$ on $\tilde{M}$ as in Section~\ref{sec:ModuleStructure}.
Then the coefficient of $a\ast x$ in front of $y t^{\overline{\mu}(\lambda)}$ for some critical point $y$ of $\tilde{f}|_{\partial V ^{\epsilon/2}}$ is defined by counting the elements in the zero dimensional moduli spaces $\mathcal{P}_{mod}(x,y,a,\lambda;\tilde{f}|_{\partial V ^{\epsilon/2}},h)$.
Recall from the proof of Lemma~\ref{lem:prodandi} that for such a function $\tilde{f}$ the inclusion $i$ of some cycle $x\in C(\partial V^{\epsilon/2}; f, J)$ is given by $i(x)=(t_{{\iota},0},a_{\iota},x)$ for some ${\iota}\in I_-\cup J_+$.
We need to compare the elements of $\mathcal{P}_{mod}(x,y,a,\lambda;\tilde{f}|_{\partial V ^{\epsilon/2}},h)$ and the elements of $\mathcal{P}_{mod}((t_{{\iota},0},a_{\iota},x),z,(\xi, a),\lambda;\tilde{f},\tilde{h})$ for some critical points $z\in C(U; \tilde{f}|_U, \tilde{J})$ and $\xi\in Crit(T_R)$.
By the same arguments as in the proof of Lemma~\ref{lem:prodandi} we know that the pearly trajectory in $V$ starting at the critical point $i(x)=(t_{{\iota},0},a_{\iota},x)\in U$ of $\tilde{f}$ cannot leave the set $\partial V_{\epsilon/4}$.
Hence $\mathcal{P}_{mod}((t_{{\iota},0},a_{\iota},p),z,({\xi}, a),\lambda;\tilde{f},\tilde{h})$ is not empty if and only if $z=(t_{{\iota'},0},a_{\iota'},p)$ for some $p\in im(i)$ and ${\iota}={\iota}'$.
Moreover there exists a unique flow line of $-\nabla \tau_R$ form the unique maximum ${\xi}$ of $\tau_R$ to the any point in $R$, in particular the point $(t_{{\iota},0},a_{\iota})$.
This implies that $\sharp \mathcal{P}_{mod}((t_{{\iota},0},a_{\iota},x),(t_{{\iota},0},a_{\iota},p),(\xi, a),\lambda;\tilde{f},\tilde{h})=\sharp \mathcal{P}_{mod}(x,p,a,\lambda;f,h)$, which proves the statement.
 \item[(ii)] 
Recall that the map $j$ on the chain level is the inclusion of the subcomplex $C_*(U;\tilde{f}|_U,\tilde{J};\mathcal{R})$ into the complex $C_*((V,S);\tilde{f},\tilde{J};\mathcal{R})$.
The expression $j_*(a\ast x)$ counts elements of the moduli space $\mathcal{P}_{mod}(x,y,a,\lambda;\tilde{f},\tilde{h})$, where $x$ and $y$ are both contained in $Crit(\tilde{f})\cap U$ and $a\ast j_*(x)$ counts elements in the moduli space $\mathcal{P}_{mod}(x,y,a,\lambda;\tilde{f},\tilde{h})$, where $x\in im(j)=Crit(\tilde{f})\cap U$ and $y\in Crit(\tilde{f})$.
There exists a bijection between these moduli spaces.
Indeed, by the same argument as in the previous lemma, the moduli space $\mathcal{P}_{mod}(x,y,a,\lambda;\tilde{f},\tilde{h})$ is empty if $y \notin Crit(\tilde{f})\cap U $.
 \item[(iii)]
Recall that the map $\delta$ on the chain level, is defined by restricting $Crit(\tilde{f})$ to $Crit(\tilde{f}|_{S^{\epsilon/2}})$.
The expression $\delta_*(a\ast x)$ is defined by counting the elements in the moduli spaces of the type $\mathcal{P}_{mod}(x,y,a,\lambda;\tilde{f},\tilde{h})$ with $x\in Crit(\tilde{f})\cap S^{\epsilon/2}$, $a\in  Crit(\tilde{h})$ and $y=(t_{{\iota},1},a_{\iota},q)\in Crit(\tilde{f})\cap S^{\epsilon/2}$.
As before, the critical point $x$ has to lie in $S^{\epsilon/2}$, since otherwise there exists no pearly trajectory from $x$ to $y$ by the choice of $\tilde{f}$. 
Notice also that this space is empty whenever $y$ is not contained in the same connected component of the boundary as $x$.
Therefore we can write $x:=(t_{{\iota},1},a_{\iota},p)$, where $p\in Crit(\tilde{f}|_{S^{\epsilon/2}})$.
To compute $\Phi^{-1}(a)\ast \delta_*(x)$ we count elements in the moduli space of the form
$\mathcal{P}_{mod}(p,q,\pi_M(a),\lambda;\tilde{f}|_{S^{\epsilon/2}},\tilde{f'}|_{S^{\epsilon/2}},h)$, where $q$ is some critical points $\tilde{f}|_{S^{\epsilon/2}}$ and $\pi_M$ denotes the projection $\pi_M:\mathcal{R}\times M \rightarrow M$.
This space is empty if $p$ and $q$ are not contained in the same connected component of $\partial V^{\epsilon}$.
Clearly there exists a unique negative gradient flow line of $\tau_R$ from $\pi(a)$ to $\pi(x)=\pi(y)=(t_{{\iota},1},a_{\iota})$.
This shows that there is a bijection between the moduli space $\mathcal{P}_{mod}(x,y,a,\lambda;\tilde{f},\tilde{h})$ and $\mathcal{P}_{mod}(p,q,\pi_M(a),\lambda;f,h)$, which proves the last part of the lemma.
\end{enumerate}

\end{proof}

\section{An Example}\label{sec:example}

In this section we compute the quantum homologies for a simple example.
Let $L_1$ and $L_2$ be two Lagrangian spheres intersecting at exactly one point.
Applying Lagrangian surgery as defined by Polterovich in~\cite{polterovich} gives 
a Lagrangian cobordism $V$ with negative ends $L_1$ and $L_2$ and one positive end 
$L_3:=L_1\sharp L_2$, which is the sphere obtained from $L_1$ and $L_2$ by surgery. 
See~\cite{LagrCobordism} or~\cite{cedric} for a precise explanation.
Moreover we can endow $V$ with a spin structure (see~\cite{cedric} and 
Section~\ref{sec:orientation}). The ends $L_1$, $L_2$ and $L_3$ of $V$ inherit spin 
structures too. 
Hence, once a spin structure is fixed on $V$ we can work over any ring.
In order to compute the quantum homology for this cobordism, we first start by computing 
the singular homology.

The cobordism $V$ is topologically $int B^3\setminus ( int B_1^3 \coprod int B_2^3)$, 
where $B^3$ denotes the $3$-dimensional ball.
It therefore has the homotopy type of $S^2\lor S^2$.
Therefore the singular homology groups are:
\begin{center}
  \begin{tabular}{ c || c | c | c | c }
    $*$ & $0$ & $1$ & $2$ & $3$ \\ \hline
    $H_*(V)$ & $\mathbb{Z}$ & $0$ & $\mathbb{Z}\oplus\mathbb{Z}$ & $0$ \\ 
    $H_*(V,\partial V)$ & $0$ & $\mathbb{Z}\oplus\mathbb{Z}$ & $0$ & $\mathbb{Z}$. \\
  \end{tabular}
\end{center}
The differential $d$ for the quantum homology can be written as the sum 
\begin{equation*}
 d=d_0+d_1 t+ d_2 t^2 + \dots + d_i t^i+ \dots ,
\end{equation*}
where $d_i$ has degree $-1+i N_V$. 
Since $N_V=2$ this implies that $d_i=0$ unless $i=0$ and in this case $d_0$ is 
the usual Morse differential.
We conclude that there exist isomorphisms
\begin{equation*}
 QH_*(V)\cong H_*(V)\otimes \Lambda
\end{equation*}
and 
\begin{equation*}
 QH_*(V,\partial V)\cong H_*(V, \partial V)\otimes \Lambda
\end{equation*}

Note that these isomorphisms are not canonical in the sense that there are no preferred 
such isomorphisms but rather a preferred class of them, 
see~\cite{LQH},~\cite{rigidity} and~\cite{LagrTop} for more details on that.
Similarly, let us compute the quantum homologies of the pair $(V,S)$, where $S\subset 
\partial V$ 
is a collection of connected components of the boundary $\partial V$.
Suppose that $S=L_i$ for some $i=1,2,3$, then $H_*(S)$ is equal to $\mathbb{Z}$ in degree 
zero and 
two, and zero otherwise. 
The long exact sequence
\begin{equation*}
\xymatrix{
 \dots \ar[r]^{\delta_*} & H_*(S) \ar[r]^{i_*} & H_*(V)\ar[r]^{j_*} & 
H_*(V,S)\ar[r]^{\delta_*} & H_{*-1}(S)\ar[r]^{i_*} & \dots,
}
\end{equation*}
becomes in our case the sequence
\begin{equation*}
\xymatrix{
 \dots \ar[r]^{\delta_*} & 0 \ar[r]^{i_*} & 0\ar[r]^{j_*} & 
H_3(V,S)\ar[r]^{\delta_*} &\dots \\
\dots \ar[r]^{\delta_*} & \mathbb{Z} \ar[r]^{i_*} & 
\mathbb{Z}\oplus\mathbb{Z} \ar[r]^{j_*} & 
H_2(V,S) \ar[r]^{\delta_*} &\dots \\
\dots \ar[r]^{\delta_*} & 0 \ar[r]^{i_*} & 
0 \ar[r]^{j_*} & 
H_1(V,S) \ar[r]^{\delta_*} &\dots \\
\dots \ar[r]^{\delta_*} & \mathbb{Z} \ar[r]^{i_*} & 
\mathbb{Z}\ar[r]^{j_*} & 
H_0(V,S) \ar[r]^{\delta_*} & 0  \\
 }.
\end{equation*}
Since we know that $H_0(V,S)=H_3(V,S)=0$ we conclude that 
\begin{center}
  \begin{tabular}{ c || c | c | c | c }
    $*$ & $0$ & $1$ & $2$ & $3$ \\ \hline
    $H_*(V,L_i)$ & $0$ & $0$ & $\mathbb{Z}$ & $0$ \\ 
    $H_*(V,\partial V\setminus L_i)$ & $0$ & $\mathbb{Z}$ & $0$ & $0$ \\
  \end{tabular}
\end{center}
By the same argument as before we have isomorphisms $QH_*(V,L_i)\cong H_*(V,L_i)\otimes 
\Lambda$ and
$QH_*(V,\partial V \setminus L_i)\cong H_*(V,\partial V \setminus L_i)\otimes \Lambda$.

The next lemma defines a useful basis for $QH_*(V,\partial V)$ over the ground ring 
$\mathcal{R}=\mathbb{Q}$.
\begin{lemma}\label{lem:canonicalbasis}
The elements
 \begin{equation}\label{xi}
 x_i:= ([L_i]\times R)*e_{(V,\partial V)}.
\end{equation} 
for $i=1,2$ together with the unit $e_{(V,\partial V)}$ form a basis of $QH_*(V,\partial 
V)$ over $\Lambda:=\mathbb{Q}[t,t^{-1}]$.
\end{lemma}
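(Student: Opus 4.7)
The plan: To exhibit the triple $\{e_{(V,\partial V)}, x_1, x_2\}$ as a $\Lambda$-basis, I would first reduce to showing $\Lambda$-linear independence of $x_1, x_2$ in $QH_1(V,\partial V)$, transport this question via the connecting map $\delta_*$ of the long exact sequence (Theorem~\ref{thm:longexact}), and finally settle it through the augmentations of the end Lagrangians.

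A direct degree count gives $|x_i|=|[L_i]\times R|+|e_{(V,\partial V)}|-(2n+2)=4+3-6=1$ and $|e_{(V,\partial V)}|=n+1=3$. Since we have already established $QH_*(V,\partial V)\cong H_*(V,\partial V)\otimes \Lambda$, this module is free of rank $3$ over $\Lambda$, concentrated in degree $1$ (rank $2$) and degree $3$ (rank $1$). The unit $e_{(V,\partial V)}$ generates the rank-one summand $QH_3(V,\partial V)\cong \Lambda$ by Lemma~\ref{lem:unit_rel}, so it remains to show that $x_1,x_2$ are $\Lambda$-linearly independent in $QH_1(V,\partial V)\cong \Lambda^2$. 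The long exact sequence of Theorem~\ref{thm:longexact}, together with $QH_1(V)\cong H_1(V)\otimes \Lambda=0$, makes $\delta_*\colon QH_1(V,\partial V)\to QH_0(\partial V)=\bigoplus_{j=1}^{3}QH_0(L_j)$ injective, so the problem further reduces to linear independence of $\delta_*(x_1),\delta_*(x_2)$ in $QH_0(\partial V)$.

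Applying Theorem~\ref{thm:longexact}(5)(iii) together with the identity $\delta_*(e_{(V,\partial V)})=e_{L_1}\oplus e_{L_2}\oplus e_{L_3}$ yields
\begin{equation*}
\delta_*(x_i)=\Phi^{-1}([L_i]\times R)\ast \delta_*(e_{(V,\partial V)})=\sum_{j=1}^{3}[L_i]\ast e_{L_j}.
\end{equation*}
To test a potential relation $\alpha_1\delta_*(x_1)+\alpha_2\delta_*(x_2)=0$ I would apply, for $j=1,2$, the degree-preserving $\Lambda$-linear augmentation $\epsilon_{L_j}\colon QH_*(L_j)\to \Lambda$ of Proposition~\ref{prop:augm}. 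Since $\epsilon_{L_j}$ sends the point class $pt_{L_j}$ to $1$ and vanishes on every class of strictly positive Morse degree, it annihilates the possible quantum corrections to $[L_i]\ast e_{L_j}$ (which can only contribute to the $t\cdot[L_j]$-component of $QH_0(L_j)$) and isolates the classical Morse intersection count $\epsilon_{L_j}([L_i]\ast e_{L_j})=[L_i]\cdot_M[L_j]$.

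Since $L_1,L_2$ are Lagrangian $2$-spheres meeting transversely in one point, $[L_i]\cdot_M[L_i]=\pm\chi(L_i)=\pm 2$ and $[L_1]\cdot_M[L_2]=\pm 1$; the resulting $2\times 2$ system over $\Lambda$ has coefficient matrix of nonzero determinant (either $\pm 3$ or $\pm 5$ depending on orientation conventions), forcing $\alpha_1=\alpha_2=0$. The main potential obstacle is ensuring that the $j=1,2$ augmentations are nontrivial: should $QH_*(L_j)$ vanish for some $j\in\{1,2\}$, I would instead use $\epsilon_{L_3}$ together with the standard surgery identity $[L_3]=[L_1]+[L_2]\in H_2(M)$ to compute $[L_i]\cdot_M[L_3]=[L_i]\cdot_M[L_1]+[L_i]\cdot_M[L_2]$, which still produces a nondegenerate system.
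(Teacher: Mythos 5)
Your proof is correct and takes essentially the same route as the paper: compose $\delta_*$ with projection to each end $L_j$ and the augmentation $\epsilon_{L_j}$ to produce the classical intersection matrix $([L_i]\cdot_M[L_j])$, whose nonzero determinant forces $\Lambda$-linear independence. Two minor remarks: the injectivity of $\delta_*$ in degree $1$ that you establish via the long exact sequence is not actually needed (nonsingularity of the composed map alone suffices, which is how the paper phrases it), and the determinant is always $\pm 3$ (not $\pm 5$), since the matrix $\epsilon(\pi_i(\delta(x_j)))=[L_j]\cdot_M[L_i]$ is symmetric so the off-diagonal entries agree.
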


\begin{proof}
 It suffices to show that $x_1$ and $x_2$ are linearly independent over $\Lambda$.
Let $\pi_i:\partial V\rightarrow L_i$ denote the projection of $\partial V$ to $L_i$.
Notice that 
\begin{equation*}
 \begin{array}{ccc}
  \epsilon(\pi_i(\delta(x_j))&=&\epsilon([L_j]*e_{L_i})\\
  &=&[L_j]\cdot [L_i]\\
\end{array}
\end{equation*}
and therefore
\begin{equation*}
\begin{array}{ccccc}
  \epsilon(\pi_1(\delta(x_1))&=& -2\\
  \epsilon(\pi_1(\delta(x_2))&=& 1\\
  \epsilon(\pi_2(\delta(x_1))&=& 1\\
  \epsilon(\pi_2(\delta(x_2))&=& -2,\\
\end{array}
\end{equation*}
which proves the lemma.
\end{proof}

A basis for $QH_*(\partial V)$ is given by $e_{L_1}, \overline{p_1}, e_{L_2}, 
\overline{p_2},e_{L_3}, \overline{p_3}$, where $\overline{p_i}$ is a lift of the class of 
a point in $H_0(L_i)$ under the augmentation.

A basis for $QH_*(V)$ is $\overline{p}$, $[L_1]_{QH_*(V)}$, $[L_2]_{QH_*(V)}$, where 
$\overline{p}$ is the lift of a point under the augmentation and 
$[L_1]_{QH_*(V)}$, $[L_2]_{QH_*(V)}$ are the classes corresponding 
to the elements $[L_1],[L_2]\in H_2(V)$ in the following sense.
There exists a short exact sequence 
\begin{equation}\label{eq:ttozero}
 \begin{array}{ccccccccc}
  0 & \rightarrow & H_2(V)t & \rightarrow & QH_0(V) & \rightarrow & H_0(V) & \rightarrow 
& 0\\
 \end{array}.
\end{equation}
Consider the map
\begin{equation}\label{eq:isoincl}
 H_2(\partial V)t\rightarrow  H_2(V)t
\end{equation}
induced by the inclusion.
Denote by $[L_1]_{QH_*(V)}\in QH_*(V) $, $[L_2]_{QH_*(V)}\in 
QH_*(V)$ the image of the classes $[L_1]$ and $[L_2]\in H_2(\partial V)$ under the 
map~(\ref{eq:isoincl}) and the first map in the short exact sequence~(\ref{eq:ttozero}).

The following lemma helps us to compute the maps $\delta_*$, $i_*$ and $j_*$.
\begin{lemma}
 Denote by $[(V,\partial V)]$ the image of the unit $e_{(V,\partial V)}$ under the 
inclusion $i_{(V,\partial V)}$.
Then
\begin{equation}
[(V,\partial V)]=([L_1]_{QH_*(V)}+ [L_2]_{QH_*(V)})\times I =- [L_3]_{QH_*(V)}\times I.
\end{equation}
\end{lemma}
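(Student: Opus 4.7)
The plan is to reduce the identity to a computation in singular homology, exploiting the low-dimensional features of this example. First, by Lemma~\ref{lem:unit_rel}, the unit $e_{(V,\partial V)}\in QH_{3}(V,\partial V)$ is represented on the chain level by the unique maximum of a Morse function $\tilde{f}$ on $V$ with $-\nabla\tilde{f}$ pointing outside along $\partial V$. Since $N_V=2$, the higher quantum differentials vanish for degree reasons (precisely the observation already used to conclude $QH_*(V,\partial V)\cong H_*(V,\partial V)\otimes\Lambda$), and hence the $t^{0}$-component of $e_{(V,\partial V)}$ corresponds to the fundamental class $[V,\partial V]\in H_{3}(V,\partial V)$.

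Next, I apply the map $i_{(V,\partial V)}$ from Proposition~\ref{prop:inclusion}, which by construction extends the ordinary topological inclusion on the $t^{0}$-component. Therefore $[(V,\partial V)]=i_{(V,\partial V)}(e_{(V,\partial V)})$ is represented by $V$ viewed as a relative $3$-cycle in $(\tilde{M}_T,\partial\tilde{M}_T)$. The Künneth isomorphism of Remark~\ref{rmk:Kunneth} gives $H_{3}(\tilde{M}_T,\partial\tilde{M}_T)\cong H_{2}(M)\otimes H_{1}(T,\partial T)$, under which any class takes the form $I\times\alpha$ for a unique $\alpha\in H_{2}(M)$. To identify $\alpha$, I would compare $V$ with the explicit cylindrical $3$-chain $C:=[R_{-}-\epsilon,R_{+}+\epsilon]\times\{0\}\times(L_1\sqcup L_2)\subset\tilde{M}_T$, which tautologically represents $I\times([L_1]_M+[L_2]_M)$; the difference $V-C$ has boundary supported on the positive end $\{R_{+}+\epsilon\}\times M$ and equal to $L_3-(L_1+L_2)$ there. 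Polterovich surgery gives the relation $[L_3]_M=[L_1]_M+[L_2]_M$ in $H_{2}(M)$ (with the orientation inherited from the cobordism), so $V-C$ is null-homologous, forcing $\alpha=[L_1]_M+[L_2]_M$, and equivalently $\alpha=-[L_3]_M$ once the sign coming from the orientation of $L_3$ as a positive end of $V$ is taken into account.

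To conclude, I would translate this singular-homology identity into the statement of the lemma by unpacking the notation $[L_i]_{QH_*(V)}\times I$ via the natural map $H_2(\partial V)\to H_2(M)$ induced by the inclusion $\partial V\hookrightarrow M$ combined with the Künneth cross product; both sides of the claimed equation then refer to the same class in $QH_{3}(\tilde{M}_T,\partial\tilde{M}_T)$. The main obstacle will be the orientation bookkeeping: confirming that the sign between $[L_3]$ and $[L_1]+[L_2]$ stated in the lemma matches the conventions used for Polterovich surgery and for the induced orientation on $\partial V$. A secondary, purely technical check is that possible higher-order $t$-components of $e_{(V,\partial V)}$ coming from $H_1(V,\partial V)t$ either vanish within the unit itself or map trivially under $i_{(V,\partial V)}$, which follows from the same degree-counting/vanishing argument that produced the isomorphism $QH_*(V,\partial V)\cong H_*(V,\partial V)\otimes\Lambda$ at the chain level, applied now to the moduli spaces defining $i_{(V,\partial V)}$.
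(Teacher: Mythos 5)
Your reduction to singular homology via the Künneth isomorphism, the explicit cylindrical relative cycle $C$, and the surgery relation $[L_3]_M=[L_1]_M+[L_2]_M$ is a genuinely different route from the paper's proof, which instead fixes a Morse function on $V$ with four local maxima (one in the interior and one on each end) and three auxiliary functions $\tilde g_i=(\tau_T)_i+f$ on $\tilde M_T$ whose single $\tau_T$-critical points sit at the prescribed $x$-coordinates $\xi_1,\xi_2,\xi_3$, and then reads the answer off from a direct count of the pearly moduli spaces $\mathcal P_{inc}$ with respect to these three choices. The topological core of your argument is sound, though it needs two small repairs: $\partial(V-C)$ is also nonzero on the negative end of $\partial\tilde M_T$ (the ends of $V$ sit at $y=a_i^\pm$ while $C$ sits at $y=0$, so these boundary pieces are merely homologous, not equal), and the conclusion that $[V-C]=0$ uses the injectivity of $\partial_*\colon H_3(\tilde M_T,\partial\tilde M_T)\to H_2(\partial\tilde M_T)$, which follows from your Künneth computation together with $H_0(T,\partial T)=0$.

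The genuine gap is in your last paragraph. Discarding the $\lambda\neq 0$ contributions to $i_{(V,\partial V)}(m)$ does not "follow from the same degree-counting argument" that gave $QH_*(V,\partial V)\cong H_*(V,\partial V)\otimes\Lambda$. That argument closes because $d_1$ would have to land in $H_4(V,\partial V)=0$. For the inclusion the count does not close: a term $a\,t^{\overline\mu(\lambda)}$ with $\mu(\lambda)=2$ and $|a|=5$ is degree-preserving, and $H_5(\tilde M_T,\partial\tilde M_T)\cong H_4(M)\otimes H_1(T,\partial T)\cong\mathcal R$ is nonzero (here $n=2$, $\dim\tilde M_T=6$). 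A correction of the form $c\,[M]\times I\cdot t$ is therefore not excluded by dimension alone, and ruling it out requires actually analyzing the Maslov-$2$ moduli spaces $\mathcal P_{inc}(m,a,\lambda)$. That analysis is precisely what the paper's special choice of $\tilde g_i$ at the $x$-coordinates of the local maxima, combined with Lemma~\ref{lem:curve_openmapping} forcing the disk components to have constant $\pi$-projection, is designed to carry out. The step you set aside as a "secondary, purely technical check" is where the quantum content of the lemma actually lives.
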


\begin{proof}
 Consider a Morse function $\tilde{f}$ on $V$ respecting the exit region $\partial V$. 
We may assume that $\tilde{f}$ has four local 
maxima.
One inside the set $U$ (see Definition~\ref{def:specialMF}) and one for 
each of the ends $L_i$. 
Moreover, we
assume that the local maxima on the negative ends of $V$ have the same $x$-coordinate. 
Let $\xi_1$ denote the $x$-coordinate of the maxima on the negative ends, $\xi_2$ the 
$x$-coordinate of the critical point in the set $U$ and
$\xi_3$ the $x$-coordinate of the maxima on the 
positive end.
We introduce three special Morse functions $\tilde{g}_i$ on 
$\tilde{M}_T$ for $i=1,2,3$. 
Fix a Morse function $f$ on $M$ and let $\tilde{g}_i=(\tau_T)_i+ f$ be the Morse 
function on $\tilde{M}_T$, such that $(\tau_T)_i$ has exactly one critical point with 
$x$-coordinate $\xi_i$.
Taking the inclusion by counting the appropriate moduli spaces with respect to these 
three functions shows that $[(V,\partial V)]=([L_1]_{QH_*(V)}+ [L_2]_{QH_*(V)})\times I 
= 
-[L_3]_{QH_*(V)}\times I$.
\end{proof}

The maps $\delta_*$, $i_*$ and $j_*$ act on the basis elements defined above as follows.
\begin{equation*}
\begin{array}{cccc}
  \delta_*: & QH_*(V,\partial V) & \rightarrow & QH_{*-1}(\partial V)\\
 & e_{(V,\partial V)} & \mapsto & e_{L_1}+e_{L_2}+e_{L_3}\\
 & ([L_1]\times R)*e_{(V,\partial V)}  & \mapsto &[ 
L_1]*(e_{L_1}+e_{L_2}+e_{L_3})\\
 & ([L_2]\times R)*e_{(V,\partial V)}  & \mapsto & 
[L_2]*(e_{L_1}+e_{L_2}+e_{L_3})\\
\end{array}
\end{equation*}

\begin{equation*}
\begin{array}{cccc}
  i_*: & QH_*(\partial V) & \rightarrow & QH_{*}(V)\\
 & e_{L_1} & \mapsto & [L_1]_{QH_*(V)}\\
 & e_{L_2} & \mapsto & [L_2]_{QH_*(V)}\\
 & e_{L_3} & \mapsto & -[L_1]_{QH_*(V)}-[L_2]_{QH_*(V)}\\
 & \overline{p_i} & \mapsto &  \overline{p} \ \quad \forall i\\
\end{array}
\end{equation*}

\begin{equation*}
\begin{array}{cccc}
  j_*: & QH_*(V) & \rightarrow & QH_{*}(V,\partial V)\\
 & [L_1]_{QH_*(V)}  & \mapsto & 0\\
 & [L_2]_{QH_*(V)}  & \mapsto & 0\\
 & \overline{p} & \mapsto & 0\\
\end{array}
\end{equation*}

On the basis elements described above the inclusion map from $QH_*(V,\partial V)$ into 
the ambient quantum homology 
$QH_*(\tilde{M}_T,\partial \tilde{M}_T)$ is given by

\begin{equation*}
 \begin{array}{cccc}
  i:& QH_*(V,\partial V)& \rightarrow & QH_*(\tilde{M}_T,\partial \tilde{M}_T)\\
 & ([L_1]\times R)*e_{(V,\partial V)}  & \mapsto & [L_1]*([L_1]+[L_2])\times I\\
 & ([L_2]\times R)*e_{(V,\partial V)}  & \mapsto & [L_2]*([L_1]+[L_2])\times I\\
 &e_{(V,\partial V)} & \mapsto & ([L_1]+[L_2])\times I\\
 \end{array}
\end{equation*}

Next, we compute the quantum product structures.
Recall that a basis of $QH_*(V,\partial V)$ is given by
\begin{center}
  \begin{tabular}{ c || c | c | c | c }
    $*$ & $0$ & $1$ & $2$ & $3$ \\ \hline
    $QH_*(V,\partial V)$ & & $x_1:=([L_1]\times R)*e_{(V,\partial V)}$ & & 
$e:=e_{(V,\partial V)}$  \\ 
    & & $x_2:=([L_2]\times R)*e_{(V,\partial V)}$ & &  \\ 
  \end{tabular}
\end{center}
where $e$ is the unit.
Clearly
\begin{equation*}
\begin{array}{ccc}
 x_1*x_1 &=& ([L_1]*[L_1]\times R)*e\\
 x_2*x_2 &=& ([L_2]*[L_2]\times R)*e\\
 x_1*x_2 &=& ([L_1]*[L_2]\times R)*e.\\
\end{array}
\end{equation*}
Recall that here the product $[L_i]*[L_j]$ denotes the product in the ambient quantum 
homology, which was defined using Gromov-Witten invariants. See Section~\ref{sec:QHandFH} 
and~\cite{McDuffSalamon} for more details.

\begin{remark}
 There is a more explicit way to calculate the product of the elements from 
Lemma~\ref{lem:canonicalbasis}, which in fact leads to a proof that the discriminats 
(see~\cite{cedric}) of 
the Lagrangian spheres $L_1$, $L_2$ and $L_3$ in the example above are all the same.
This is a result that has already been proven in a different way in~\cite{cedric}.
Moreover, if there is a configuration of Lagrangian spheres with Intersection graph as in 
figure~\ref{fig:A1}
\begin{figure}
 \centering
\includegraphics{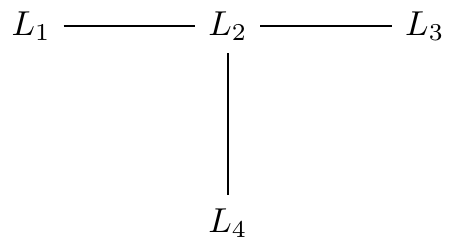}
\caption{Intersection graph of Lagrangian spheres}\label{fig:A1}
\end{figure}
then we can show that the discriminats of all $L_i$ are zero.
This will be further explored in~\cite{discriminats}.
\end{remark}

\subsection*{Acknowledgements}
This paper was written in the framework of my master thesis, supervised by Prof. 
Paul Biran at ETH Zürich.
I am grateful to Paul Biran for his guidance throughout this paper.
I would like to thank Felix Hensel and Will Merry for their help and comments.
For helpful discussions I would also like to thank Cedric Membrez. 

\bibliographystyle{plain}
\bibliography{QH.bib}

 \end{document}